\newcommand\tstep{{{\Delta}t}}
\title{Gauss-quadrature method for one-dimensional mean-field SDEs}
\author{Peter Kloeden\footnote{School of Mathematics \& Statistics,
		Huazhong University of Science \& Technology,
		Wuhan 430074, China.} \and Tony Shardlow\footnote{Department of Mathematical Sciences, University of Bath, Bath BA2 7AY, UK. \texttt{t.shardlow@bath.ac.uk}  TS is grateful for support from Huazhong University of Science \& Technology, where part of this work was performed.  }}
\numberwithin{theorem}{section}
\begin{document}
\maketitle
\begin{abstract} Mean-field SDEs, also known as McKean--Vlasov equations, are stochastic differential equations where the drift and diffusion depend on the current distribution in addition to the current position. We describe an efficient numerical method for approximating the distribution at time $t$ of the solution to the initial-value problem for one-dimensional mean-field SDEs. The idea is to time march (e.g., using the Euler--Maruyama time-stepping method) an $m$-point Gauss quadrature rule. With suitable regularity conditions, convergence with first order is proved for Euler--Maruyama time stepping. We also estimate the work needed to achieve a given accuracy in terms of the smoothness of the underlying problem. Numerical experiments are given, which show the effectiveness of this method as well as two second-order time-stepping methods. The methods are also effective for ordinary SDEs in one dimension, as we demonstrate by comparison with the multilevel Monte Carlo method.
\end{abstract}

\section{Introduction}

%---------------------

McKean--Vlasov or mean-field SDEs are a class of stochastic differential equations where the drift and diffusion depend on the current position along the path and on the current distribution. They were derived to describe propagation of chaos in a system of particles that interact only by their empirical mean in the limit of large number of particles \cite{McKean1966-kb}. We study mean-field SDEs in one dimension and are interested in the following initial-value problem: determine the real-valued process $X^\mu(t)$, $t>0$, such that
\begin{equation}
X^\mu(t)-X^\mu(0)%
=\int_0^t \int_{\real}  a(X^\mu(s), y)\, P_s^\mu(dy)\,ds%
+  \int_0^t\int_{\real} b(X^\mu(s),
y)\,P_s^\mu(dy)\,dW(s),%
\label{eq:mf_sde}
\end{equation}
where $P_s^\mu$
denotes the distribution of $X^\mu(s)$ and the initial distribution $X^\mu(0)\sim \mu$ for some prescribed probability measure $\mu$. Here, $a\colon\real^2\to\real$ is the drift, $b\colon\real^2\to\real$ is the diffusion,  $W(t)$ is a one-dimensional Brownian motion (independent of $X^\mu(0)$), and we interpret the stochastic integral as an Ito integral. We also write this as
\[
dX^\mu(t)=P^\mu_t (a(X^\mu(t),\cdot))\,dt + P^\mu_t (b(X^\mu(t),\cdot))\,dW(t),\qquad X^\mu(0)\sim \mu,
\]
where $\nu(\phi)\coloneq\int_\real \phi(x)\,\nu(dx)$ for an integrable function $\phi\colon \real\to\real$ and a measure $\nu$ on $\real$. Under the following condition, \cref{eq:mf_sde} has a unique strong solution with a smooth density \cite[Theorem 2.1]{Antonelli2002-er}.
(Though \cref{eq:mf_sde} is well-posed more generally \cite{Gartner1988-xc,sznit,peter2}, \cref{ass1} is close to the ones in our error analysis.)
\begin{assumption}\label[assumption]{ass1}
	%	The initial measure $\mu$ has density $p_0$ with respect to Lebesgue measure and $p_0(x) \le \eta\exp(-\alpha \abs{x}^\beta)$ for $\abs{x}\ge \rho$, for some constants $\eta,\alpha,\rho>0$.
	Suppose that $p$th moments of  the initial distribution $\mu$ are finite for all $p\ge 1$ and that the coefficients $a$ and $b$ are smooth with all derivatives uniformly bounded.
\end{assumption}

Several numerical methods have been proposed for \cref{eq:mf_sde} and their convergence behaviour analysed. Early work includes \cite{Bossy1997-fs,Bossy1996-gr}, which show convergence of a method based on Monte Carlo evaluation of the averages and Euler--Maruyama time stepping. The same method was studied using Malliavin calculus in \cite{Antonelli2002-er} and more refined convergence results proved. More recently, \cite{Ricketson2015-xv} has developed the multilevel Monte Carlo method in cases where the drift and diffusion depend on the distribution via the mean of a function of $X^\mu(t)$. Cubature methods have also been developed in \cite{McMurray2015-lv}.

We are interested in numerical approximation of  the distribution of $X^\mu(t_n)$ by a probability measure $Q_n$, where $t_n=n\tstep$ for a time step $\tstep>0$.  Consider a one-step numerical method  that pushes forward the measure $Q_n$ to $Q_{n+1}$. For an example, let
\begin{equation}\label{psi}
\Psi(x,\tstep,Q)%
\coloneq x%
+ \tstep\,Q (a(x,\cdot))%
+\sqrt{\tstep}\, Q( b(x,\cdot))\,\xi,%
\end{equation}
for $\xi\sim \Nrm(0,1)$ or a random variable with a nearby distribution, such as the two-point random variable with $\prob{\xi=\pm 1}=1/2$.  For the Euler--Maruyama method,
 $Q_{n+1}$ is the distribution of  $X_{n+1}=\Psi(X_n,\tstep, Q_n)$, assuming $\xi$ is independent of $X_n$ and $X_0\sim \mu$.
In the case that $a,b$ are independent of their second argument,
\[
X_{n+1}=X_n + \tstep \,a(X_n)+ \sqrt{\tstep} \,b(X_n) \,\xi_n,
\]
where $\xi_n$ are \iid copies of $\xi$,
which is the standard Euler--Maruyama method. For ordinary SDEs, it is well-known that first-order weak convergence results if $a,b$ and the test function $\phi\colon\real\to\real$ are sufficiently smooth  \cite{Kloeden2011-qd}:
\[
\mean{\phi(X^\mu(1))}- \mean{\phi(X_N)}%
=P^\mu_1(\phi) -Q_N(\phi)%
=\order{\tstep},%
\qquad t_N=1.
\]

This method is of limited practical value for approximating $P^\mu_t(\phi)$.
 The support of $Q_N$ is uncountable if Gaussian random variables $\xi$ are used or otherwise countable but very large in number, and  the expectation $Q_N(\phi)$ is usually approximated via a Monte Carlo
method that samples from $Q_N$.  For the mean-field SDE, this is  more problematic, as all the particles must be tracked at the same time as $Q_n(a(X_n,\cdot))$ and $Q_n(b(X_n,\cdot))$ must be evaluated at each time step.

In this paper, we explore an alternative to Monte Carlo integration and employ instead Gauss quadrature, which provides accurate quadrature rules
that converge rapidly in the number of quadrature points,
under smoothness criterion on the integrand.
The idea then is to replace $Q_{n}$ by an $m_n$-point Gauss
quadrature  and thereby
reduce the number of points that we follow with the time stepping.
That is, we propagate weights $w^i_n$ and quadrature points $x^i_n$ of an
$m_n$-point rule $Q_n$, and approximate
\[
P^\mu_1(\phi)%
\approx Q_N(\phi)%
\coloneq\sum_{i=1}^{m_N} w^i_N \,\phi(x_N^i),%
\qquad t_N=1.
\]
We
derive a choice of $m_{n}$ in \cref{err} that gives first-order convergence for smooth problems.  The computation of the
Gauss quadrature rules is very efficient using standard
algorithms \cite{MR0245201,MR2061539,Boley1987-jt}. This leads to  numerical  methods for mean-field SDEs that are very efficient and we find methods that require $\order{\abs{\log \epsilon}^{3}/\epsilon}$ work to achieve accuracy $\epsilon$ for mean-field SDEs with smooth coefficients and initial distributions (see \cref{blimey2}).  This compares favourably with the $\order{1/\epsilon^2}$ work required for multilevel Monte Carlo methods, as we see in \cref{example:gbm}.

Mean-field SDEs arise as reduced-order models for systems of interacting particles. The drift and diffusion are defined in terms of the distribution of $X(t)$, so that moments of $X(t)$ can be included in their definition. In other words, the interaction with the ensemble of particles is approximated by moments and mean-field SDEs, including one-dimensional mean-field SDEs, are of interest in studying high-dimensional systems. The techniques in this paper apply to mean-field SDEs in one spatial dimension, as  Gauss quadrature is most natural for integrals over the real line, where algorithms are readily available to compute the quadrature rule. In principle, the methods and theory extend to higher dimensions, though it would be difficult to compute a suitable cubature rule. It would require a cubature rule that can be easily computed and satisfies Gauss quadrature-type error estimates (see \cref{classic_gq}). These are currently unavailable (see \cite{Xu2015-vr} for a recent discussion of Gaussian cubature).

This paper is organised as follows: \cref{sec:g} reviews key facts about Gauss quadrature and develops preliminary lemmas. \cref{sec:alg} describes the method for Gauss quadrature with Euler--Maruyama time stepping, which we call the GQ1 method. The error analysis for stochastic ODEs is developed in  \cref{err}, where we show how to choose the number $m_n$ of Gauss points. In \cref{err2}, we extend the error analysis to mean-field SDEs and modify the choice of $m_n$ for this case. We also discuss a straight-forward generalisation of the methodology to the initial-value problem for
\begin{gather}
  \begin{split}
X^\mu(t)-X^\mu(0)%
&=\int_0^t A\pp{\int_{\real}  a(X^\mu(s), y)\, P_s^\mu(dy)}\,ds%
\\&\qquad +  \int_0^t B\pp{\int_{\real} b(X^\mu(s),
y)\,P_s^\mu(dy) }\,dW(s),%
\end{split}\label{eq:mf_sde_ref}
\end{gather}
for smooth functions $A,B\colon \real\to\real$, which allows a nonlinear dependence on the time-$t$ distribution. In \cref{num}, we describe two extensions of GQ1: namely, GQ1e, which uses GQ1 with extrapolation, and GQ2, which use Gauss quadrature with a second-order time-stepping method. The remainder of the section gives a number of numerical experiments, including a comparison with the multilevel Monte Carlo method for ordinary SDEs.
\subsection{Notation}

For a measure $\mu$ on $\real$ and an  integrable function
$\phi\colon\real\to\real$, denote
$\mu(\phi)\coloneq \int_{\real} \phi(x)\,\mu(dx)$.  Let $C^k(\real^d)$ denote the space of $k$-times continuously differentiable real-valued functions on $\real^d$ and
 $F^{k,\beta}\coloneq \{\phi\in C^k(\real^d)\colon \norm{\phi}_{k,\beta}<\infty\}$, where
 \[
\norm{\phi}_{k,\beta}%
\coloneq \max_{0\le \abs{\alpha}\le k} %
\sup_{x\in\real^d}
\frac{\abs{\phi^{(\alpha)}(x)}}%
{1+\abs{x}^\beta},
\]
using the multi-index notation.
Let $C^k_K(\real^d) %
\coloneq\{\phi\in C^k(\real^d)\colon%
	\norm{\phi^{(\alpha)} }_\infty%
	\le K, \quad %
	0\le \abs{\alpha}\le k\}$,
where $\norm{\cdot}_\infty$ denotes the supremum norm. Throughout the paper, we use $c$ as a generic constant that varies from place to place.

\section{Gauss quadrature and error estimates}\label[section]{sec:g}
Before describing the algorithm, we review Gauss quadrature and associated error estimates. Let ${\cal P}_n$ denote the
polynomials up to degree $n$.

\begin{definition}[Gauss quadrature]
	We say weights $w^i>0$ and points $x^i\in\real$ for
	$i=1,\dots,m$ define an $m$-point Gauss quadrature rule
	with respect to a measure $\mu$ on $\real$ if
	\[
	\int_\real p(x)\,\mu(dx)%
	=\sum_{i=1}^m w^i\, p(x^i),\qquad %
	\forall \,p\in {\cal P}_{2m-1}.
	\]
\end{definition}

The $m$-point Gauss quadrature rule for a discrete measure
\[
\mu%
=\sum_{i=1}^N %
v^i\, \delta_{y^i},
\]
with weights $v^i>0$ and points $y^i$,
can be found via the three-term recurrence
relation for the orthogonal polynomials corresponding to the inner product $\ip{f,g}_\mu\coloneq\int_\real f(x) \,g(x)\, \mu(dx)$. First,  form the matrix $A$
with diagonal $[1,y^1,\dots,y^N]$ and first row and column
given by $[1,\sqrt{v^1},\dots,\sqrt{v^N}]$  (all other entries
zero). By applying orthogonal transformations, reduce $A$ to a symmetric tridiagonal matrix with diagonal $[\alpha^0,\alpha^1,\dots,\alpha^N]$ and off-diagonal $[{\beta}^0,{\beta}^1,\dots,\beta^N]$. The $\alpha^i$ and $\beta^i$ define the
three-term recurrence relation. Next define the Jacobi matrix, which is the symmetric tridiagonal matrix with diagonal
$[\alpha^0,\alpha^1,\dots]$ and off-diagonals $[\sqrt{\beta^0},\sqrt{\beta^1},\dots]$.  To find the $m$-point Gauss quadrature rule, the leading $m\times m$ submatrix of the Jacobi matrix should be chosen. Its eigenvalues determine the
quadrature points and the first component of the normalised
eigenvectors determine the weights, as given by the
well-known Golub--Welsch algorithm. See \cite{Boley1987-jt,MR0245201,MR2061539}.

Thus, to compute the $m$-point Gauss quadrature rule for an $N$-point discrete measure, we reduce the original matrix $(N+1)\times(N+1)$ matrix $A$
to tridiagonal form using a Lanczos procedure and solve a symmetric
eigenvalue problem for an $m\times m$ matrix. The complexity is $\order{N^2+m^3}$, which becomes burdensome when either $m$ or $N$ are large. It is the rapid convergence properties of Gauss quadrature that enable us to control the problem size.

Let us describe the errors for Gauss quadrature.
For an integrable function $\phi\colon\real\to\real$, denote the
approximation error
\[
E(\phi)=
\int_\real \phi(x)\,\mu(dx)%
-\sum_{i=1}^m w^i \,\phi(x^i).%
\]

\begin{theorem}\label{classic_gq}
  Let $\phi\in C^{2m}(\real)$. The error for $m$-point Gauss quadrature is
	\[
	E(\phi)%
	= \frac{\phi^{(2m)}(\xi)}{(2m)!} \,\ip{p_m,p_m}_\mu,\qquad \forall\,\phi\in C^{2m}(\real),
	\]
	for some $\xi\in\real$, where $\ip{p_m,p_m}_\mu=\int_\real
	p_m(x)^2\,\mu(dx)$, $p_m(x)=(x-x^1)\cdots(x-x^m)$, and $x^i$ are the Gauss quadrature points.
\end{theorem}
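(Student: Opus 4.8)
The plan is to reproduce the standard Hermite-interpolation argument. First I would set up the Hermite interpolant: given $\phi\in C^{2m}(\real)$ and the $m$ Gauss points $x^1,\dots,x^m$, let $H\in{\cal P}_{2m-1}$ be the unique polynomial of degree at most $2m-1$ satisfying $H(x^i)=\phi(x^i)$ and $H'(x^i)=\phi'(x^i)$ for $i=1,\dots,m$. Since $H$ has degree $\le 2m-1$, the Gauss rule integrates it exactly, so $\int_\real H(x)\,\mu(dx)=\sum_{i=1}^m w^i H(x^i)=\sum_{i=1}^m w^i \phi(x^i)$. Therefore
\[
E(\phi)=\int_\real \phi(x)\,\mu(dx)-\sum_{i=1}^m w^i\phi(x^i)
=\int_\real \bigl(\phi(x)-H(x)\bigr)\,\mu(dx).
\]

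Next I would invoke the classical pointwise error formula for Hermite interpolation: for each $x\in\real$ there is a point $\eta_x$ in the interval spanned by $x$ and the nodes such that
\[
\phi(x)-H(x)=\frac{\phi^{(2m)}(\eta_x)}{(2m)!}\,p_m(x)^2,
\]
where $p_m(x)=(x-x^1)\cdots(x-x^m)$; this is proved by the usual Rolle's-theorem argument applied to $t\mapsto \phi(t)-H(t)-\lambda p_m(t)^2$ with $\lambda$ chosen to vanish at $t=x$, noting that $p_m(t)^2$ is the degree-$2m$ monic polynomial with double roots at the nodes. Substituting,
\[
E(\phi)=\frac{1}{(2m)!}\int_\real \phi^{(2m)}(\eta_x)\,p_m(x)^2\,\mu(dx).
\]
Since $p_m(x)^2\ge 0$ and $\mu$ is a (finite, positive) measure, the weighted mean-value theorem for integrals applies: there is some $\xi\in\real$ with $\int_\real \phi^{(2m)}(\eta_x)\,p_m(x)^2\,\mu(dx)=\phi^{(2m)}(\xi)\int_\real p_m(x)^2\,\mu(dx)=\phi^{(2m)}(\xi)\,\ip{p_m,p_m}_\mu$, which gives the claimed formula.

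The one genuine subtlety — the step I expect to need the most care — is justifying the weighted mean-value theorem here, since $\mu$ may be a discrete measure with unbounded support and $\phi^{(2m)}$ need not be bounded, so $\int \phi^{(2m)}(\eta_x)p_m(x)^2\,\mu(dx)$ is an honest (possibly improper) integral rather than a finite sum over a compact set. In the application we use, $\mu$ is the $N$-point discrete measure $Q_n$, so this integral is just a finite weighted sum and the mean-value step is elementary; more generally one restricts attention to $x$ in the convex hull of $\mathrm{supp}(\mu)\cup\{\text{nodes}\}$, uses that $x\mapsto\phi^{(2m)}(\eta_x)$ takes values between the infimum and supremum of $\phi^{(2m)}$ over that set, and concludes by the intermediate value property of $\phi^{(2m)}$ (continuous). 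I would state the theorem with this understanding, since it suffices for all later uses in the paper.
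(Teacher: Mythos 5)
Your proof is correct, and it is essentially the same argument as the paper's: the paper proves this theorem only by citing \cite[Theorem 3.6.24]{Stoer2010-ku}, and the proof given there is precisely your Hermite-interpolation plus weighted-mean-value argument. Your closing remark about integrability and the mean-value step on a possibly unbounded support is the right caveat to flag, and it is indeed harmless in the paper's applications, where $\mu$ is a finite discrete measure.
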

\begin{proof}
	See \cite[Theorem 3.6.24]{Stoer2010-ku}.
\end{proof}

This theorem shows that Gauss quadrature converges rapidly as the number of points $m\to\infty$ for smooth integrands $\phi$. We require the following alternative characterisation of the error in terms of a minimax polynomial. A similar result is available for continuous measures in \cite[Theorem 5.4]{Atkinson1989-zq}.
\begin{theorem}\label{lgqe} Consider a discrete probability measure $\mu=\sum_{i=1}^N v^i \,\delta_{y^i}$ and approximation by the $m$-point Gauss quadrature rule $\sum_{i=1}^m w^i \,\delta_{x^i}$.
The absolute error
	\[
	\abs{E(\phi)}%
	\le
	 \min_{p\in \mathcal{P}_{2m-1}} %
	 \bp{\max_{i=1,\dots,N} \abs{p(y^i)-\phi(y^i)}%
		+ \max_{i=1,\dots,m}\abs{p(x^i)-\phi(x^i)}}.
	\]

\end{theorem}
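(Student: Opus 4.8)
The plan is to exploit the exactness of the Gauss rule on $\mathcal{P}_{2m-1}$ together with the positivity of the quadrature weights. Fix an arbitrary $p\in\mathcal{P}_{2m-1}$. Since the $m$-point rule integrates $p$ exactly against $\mu=\sum_{i=1}^N v^i\,\delta_{y^i}$, we have $\sum_{i=1}^N v^i\,p(y^i)=\sum_{i=1}^m w^i\,p(x^i)$, and therefore
\[
E(\phi)=\sum_{i=1}^N v^i\,\phi(y^i)-\sum_{i=1}^m w^i\,\phi(x^i)
=\sum_{i=1}^N v^i\,\bp{\phi(y^i)-p(y^i)}-\sum_{i=1}^m w^i\,\bp{\phi(x^i)-p(x^i)}.
\]
Taking absolute values, applying the triangle inequality termwise, and bounding each $\abs{\phi(y^i)-p(y^i)}$ by $\max_j\abs{\phi(y^j)-p(y^j)}$ and each $\abs{\phi(x^i)-p(x^i)}$ by $\max_j\abs{\phi(x^j)-p(x^j)}$, we obtain
\[
\abs{E(\phi)}
\le \pp{\sum_{i=1}^N v^i}\max_{j}\abs{\phi(y^j)-p(y^j)}
+\pp{\sum_{i=1}^m w^i}\max_{j}\abs{\phi(x^j)-p(x^j)}.
\]

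Now I would use that both sets of weights are nonnegative and sum to $1$: $\sum_i v^i=1$ because $\mu$ is a probability measure, and $\sum_i w^i=1$ because the Gauss rule is exact on constants (the constant function $1\in\mathcal{P}_{2m-1}$) and the $w^i>0$. Hence both prefactors equal $1$, giving
\[
\abs{E(\phi)}\le \max_{i=1,\dots,N}\abs{p(y^i)-\phi(y^i)}+\max_{i=1,\dots,m}\abs{p(x^i)-\phi(x^i)}.
\]
Since $p\in\mathcal{P}_{2m-1}$ was arbitrary, taking the minimum over all such $p$ yields the claimed bound.

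There is no real obstacle here; the only points requiring care are the two normalisation facts (that the $v^i$ sum to one, which is the probability-measure hypothesis, and that the Gauss weights sum to one, which follows from exactness on constants) and the positivity of the weights, which is built into the definition of a Gauss quadrature rule and is what lets us pull the maxima out of the sums without sign cancellation spoiling the estimate. The result is stated for a fixed discrete measure, so no limiting argument or regularity of $\phi$ is needed — $\phi$ need only be defined at the finitely many points $y^i$ and $x^j$.
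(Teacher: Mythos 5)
Your proof is correct and follows essentially the same route as the paper: fix $p\in\mathcal{P}_{2m-1}$, use exactness to write $E(\phi)=E(\phi-p)$, apply the triangle inequality, and use that both sets of positive weights sum to one. The only (welcome) addition is that you explicitly justify $\sum_i w^i=1$ via exactness on constants, which the paper asserts without comment.
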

\begin{proof}
	Let $p\in \mathcal{P}_{2m-1}$. As $m$-point Gauss quadrature is exact for $p\in \mathcal{P}_{2m-1}$,
	\begin{align*}
	E(\phi)&
	=E(\phi-p)%
	=\sum_{i=1}^N v^i\,(\phi-p)(y^i)%
	-\sum_{i=1}^m w^i \,(\phi-p)(x^i),
	\end{align*}
	so that
	\[
	\abs{E(\phi)}
	\le \sum_{i=1}^N v^i \max_{i=1,\dots,N}\abs{\phi(y^i)-p(y^i)}%
	+\sum_{i=1}^m w^i \max_{i=1,\dots,m}\abs{\phi(x^i)-p(x^i)}.
	\]
	Since $\sum_{j=1}^N v^i=\sum_{j=1}^m w^i=1$, this completes the proof.
\end{proof}

For the numerical solution of SDEs, we are interested in the discrete measure generated by applying Euler--Maruyama with a two-point approximation to the Gaussian increment, which increases the number of points in the support by a factor of two on each step. Using the resulting tree structure, the support can be grouped into points that stem from a smaller set of points. We write down a special error estimate in this setting.

\begin{corollary}\label[corollary]{c1}  Let $\mu$ be a discrete measure with support $\{y^1,\dots,y^{N m}\}$ and consider approximation by $m$-point Gauss quadrature. Suppose that there exists $z^i$ such that \[
\max_{j=(i-1)N+1,\dots,iN}	\abs{ z^i-y^j }%
	\le \delta, \quad\text{for $i=1,\dots,m$.}
	\]
	 Then,
	\[
	\abs{ E(\phi) } %
	\le \delta\, (2\,R)^{2m -1}\,%
	\frac{1}{(2 m)!}\,%
	 \sup_{x\in(-R,R)}\abs{\phi^{(2 m)} (x)},\qquad %
	 \forall \phi\in C^{2 m}(\real),
	\]
	where $R=\max\{\abs{z^i}, \abs{y^j}\colon i=1,\dots,m,\;j=1,\dots,Nm\}$.
	\end{corollary}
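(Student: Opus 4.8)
The plan is to reduce \cref{c1} to an application of \cref{lgqe} by constructing a good polynomial approximant to $\phi$ and exploiting the clustering hypothesis. Since \cref{lgqe} is stated for a probability measure whose support is the union of the original quadrature nodes and the new Gauss points, I would first note that the bound there holds verbatim for any discrete measure $\mu$ with support $\{y^1,\dots,y^{Nm}\}$: the only facts used in its proof are exactness of $m$-point Gauss quadrature on $\mathcal P_{2m-1}$ and that the weights $v^i$ and $w^i$ each sum to one. So the starting point is
\[
\abs{E(\phi)} \le \min_{p\in\mathcal P_{2m-1}}\Bp{\max_{j=1,\dots,Nm}\abs{p(y^j)-\phi(y^j)} + \max_{i=1,\dots,m}\abs{p(x^i)-\phi(x^i)}},
\]
where $x^i$ are the $m$ Gauss points; note $\abs{x^i}\le R$ by definition of $R$ (the Gauss points lie in the convex hull of the support, hence in $(-R,R)$), and $\abs{y^j}\le R$ as well.

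Next I would choose $p$ to be the Hermite-type interpolating polynomial of $\phi$ at the $m$ cluster centres $z^1,\dots,z^m$, each counted with multiplicity two — i.e.\ the unique $p\in\mathcal P_{2m-1}$ with $p(z^i)=\phi(z^i)$ and $p'(z^i)=\phi'(z^i)$ for $i=1,\dots,m$. The classical Hermite remainder formula gives, for any $x$,
\[
\phi(x)-p(x) = \frac{\phi^{(2m)}(\eta_x)}{(2m)!}\,\prod_{i=1}^m (x-z^i)^2,
\qquad \eta_x\in(-R,R)
\]
whenever $x\in(-R,R)$. Now I evaluate this at the points where \cref{lgqe} needs control. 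At each $y^j$ with $(i-1)N+1\le j\le iN$, the clustering hypothesis gives $\abs{y^j-z^i}\le\delta$, while $\abs{y^j-z^k}\le 2R$ for the other $m-1$ factors, so $\abs{\prod_k(y^j-z^k)^2}\le \delta^2 (2R)^{2(m-1)}$; dividing by $\delta$ would be wasteful, so actually I get an even smaller bound, but the stated one $\delta(2R)^{2m-1}$ already follows since $\delta\le 2R$ and $\delta^2(2R)^{2m-2}\le \delta(2R)^{2m-1}$. At each Gauss point $x^i$, I do not have a clustering bound, so I just use $\abs{x^i-z^k}\le 2R$ for all $k$, giving $\abs{\prod_k(x^i-z^k)^2}\le (2R)^{2m}$. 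Hmm — that last term is $(2R)^{2m}$, not $\delta(2R)^{2m-1}$, so this naive split does not immediately give the claimed bound; I need to be more careful about the Gauss points.

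The main obstacle, then, is handling the error at the Gauss points $x^i$, since there is no clustering hypothesis for them. The resolution I would pursue: the Gauss points interlace the support in the sense that each $x^i$ lies in the convex hull of $\{y^1,\dots,y^{Nm}\}$, and more usefully, I should instead pick the interpolation nodes for $p$ to include information tying $p$ to $\phi$ near the clusters in a way that the remainder at $x^i$ is also multiplied by a small factor. A cleaner route is to observe that in the intended application the $z^i$ are themselves (close to) the previous step's Gauss points, and the new Gauss points $x^i$ cluster near the $z^i$ too — but if we want the clean statement as written, the honest fix is: take $p$ to be the degree-$(2m-1)$ polynomial interpolating $\phi$ at the $2m$ points $\{z^1,\dots,z^m\}$ together with $\{x^1,\dots,x^m\}$ (simple nodes, total $2m$ points, so $p\in\mathcal P_{2m-1}$). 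Then $p(x^i)=\phi(x^i)$ exactly, killing the second max entirely, and the remainder at $y^j$ is $\phi^{(2m)}(\eta)/(2m)!\cdot\prod_i(y^j-z^i)\prod_i(y^j-x^i)$, whose modulus is at most $\delta\,(2R)^{m-1}\cdot(2R)^m = \delta(2R)^{2m-1}$ by using the clustering bound $\abs{y^j-z^i}\le\delta$ for the one matching factor and $\abs{\,\cdot\,}\le 2R$ for the remaining $2m-1$ factors. Taking the sup of $\abs{\phi^{(2m)}}$ over $(-R,R)$ then yields exactly
\[
\abs{E(\phi)}\le \delta\,(2R)^{2m-1}\,\frac{1}{(2m)!}\sup_{x\in(-R,R)}\abs{\phi^{(2m)}(x)},
\]
completing the proof. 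I would double-check that all relevant points $y^j,x^i,z^i$ lie in $[-R,R]$ so that $\eta$ does too, which holds by the definition of $R$ and the fact that Gauss nodes lie in the convex hull of the measure's support.
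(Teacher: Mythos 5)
Your final argument---interpolating $\phi$ at the $2m$ nodes $z^1,\dots,z^m,x^1,\dots,x^m$ so that the Gauss-point term in \cref{lgqe} vanishes, then bounding the Lagrange remainder at each $y^j$ by one factor of $\delta$ and $2m-1$ factors of $2R$---is exactly the paper's proof. The initial Hermite-interpolation detour is correctly abandoned, and the remaining details (Gauss points in $[-R,R]$, applicability of \cref{lgqe} to the $Nm$-point measure) are sound.
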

	\begin{proof}
Consider interpolation of $\phi$ by $p\in \mathcal{P}_{2m-1}$ based on the $2m$ interpolation points $z^1,\dots,z^{m},x^1,\dots,x^m$, where $x^i$ denote the Gauss quadrature points.  The error at $y^j$ satisfies
	\[
	p(y^j)-\phi(y^j)%
	 = \bp{ (y^j-z^1)\cdots(y^j-z^m ) \,(y^j-x^1)\cdots(y^j-x^{m}) }\frac{1}{(2 m)!} \,\phi^{(2 m)}(\xi),
	\]
	for some $\xi\in(-R,R)$ (by standard error analysis for Lagrange interpolation).
	In the  product, for each $j$, one term is bounded by $\delta$. Each $\abs{y^j-z^i}\le 2\,R$ by definition of $R$.  Hence,
	\[
	\max_j
	\abs{p(y^j)-\phi(y^j)}%
	\le \delta\,%
	 (2 R)^{2m-1} \,%
	 \frac{1}{(2 m)!} \,%
	 \sup_{x\in(-R,R)}\abs{\phi^{(2 m)} (x)}.
	\]
	The polynomial $p$ is exact at $x^i$	and \cref{lgqe} completes the proof.
	\end{proof}
\section{GQ1: Gauss quadrature with Euler--Maruyama}\label[section]{sec:alg}

We explain now in detail our  method: initialise $Q_0$ with a discrete approximation,
\[
Q_0=\sum_{i=1}^{m_0} w^i_0\, \delta_{x^i_0},
\]
 to the initial distribution $\mu$.  In the case that $\mu=\delta_x$ or $X^\mu(0)=x$ for some known $x\in\real$,  take the
one-point quadrature rule with  $x_0^1=x$, weight
$w^1_0=1$, and $m_0=1$.

Suppose that the weights $w^i_n$ and points $x^i_n$ of $Q_n$ are
known at step $n$. To determine $Q_{n+1}$, generate
the Euler--Maruyama points $X_{n+1}^{i\pm}$ defined by
\begin{equation}\label{wales}
  X^{i\pm}_{n+1}%
  =x_n^i%
  +Q_n (a(x_n^i,\cdot))\,\tstep%
  \pm Q_n (b(x_n^i,\cdot) ) \,\tstep^{1/2},\qquad %
  i=1,\dots,m_n,
\end{equation}
and define the corresponding weights
$W^{i\pm}_{n+1}=w^i_n/2$. Together the points
$X_{n+1}^{i\pm}$ and weights $W_{n+1}^{i\pm}$ define a $2\,m_n$-point quadrature rule, which we denote $Q_{n+1}^\pm$.
If left unchecked, this leads to a $2^n$-factor increase in the size of the quadrature rule, which becomes costly.

 At each step, we may continue with $Q_{n+1}=Q_{n+1}^\pm$ (if the number of points is acceptable or the final time is reached) or approximate and reduce the number of points using Gauss quadrature. To approximate, we do the following:
\begin{algorithm}
\begin{enumerate}
	\item Choose a support $[-R, R]$.
	\item For  $\abs{X_{n+1}}\ge R$, generate two points at $\pm R$ with weights $\sum_{\pm X_{n+1}^{j}\ge R} W_{n+1}^{j}$.
	\item For $\abs{X_{n+1}} < R$, generate the $m_{n+1}$-point Gauss quadrature rule for the measure $Q_{n+1}^\pm$ restricted to $(-R,R)$ (i.e., for the measure $Q_R(\cdot)=Q_{n+1}^\pm(\cdot \cap (-R, R))$).
	\item Combine the points and weights, to define a $(m_{n+1}+2)$-point quadrature rule $Q_{n+1}$.
\end{enumerate}
\label[algorithm]{alg}
\end{algorithm}

The iteration is repeated until the final time is reached. % To

Following an error analysis in the next sections, we give formulae  for the number of points $m_n$ and support radius $R$ in terms of $\tstep$ and $t_n$.
First, we establish conditions for boundedness of moments for $Q_n$.

\begin{lemma}\label[lemma]{mf_mom}
	Suppose that $a,b\in C^0_K(\real^2)$ and that $Q_0(e^{\alpha \,x^2})<\infty$ for some $\alpha>0$. Then, for some $c,\lambda>0$ independent of $\tstep$,
	\[
	Q_n(e^{\lambda \,x^2})	%
	\le c\qquad \forall\,t_n\le 1.
	\]
\end{lemma}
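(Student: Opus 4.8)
The plan is to track how the quantity $M_n \coloneq Q_n(e^{\lambda x^2})$ evolves under one step of the algorithm, showing it grows by at most a factor $(1+c\,\tstep)$ per step (for a suitably small $\lambda$), so that after $n \le 1/\tstep$ steps it is bounded by $M_0 \, e^{c} = \order{1}$. There are two ingredients in a step: (i) the Euler--Maruyama push-forward from $Q_n$ to $Q_{n+1}^\pm$, and (ii) the Gauss-quadrature reduction plus truncation at $\pm R$. I expect step (ii) to be essentially free: truncating to $(-R,R)$ only discards mass, and replacing it by a Gauss rule on $(-R,R)$ cannot increase $Q(e^{\lambda x^2})$ by more than the value of $e^{\lambda R^2}$ times the truncated mass — but more cleanly, since the reduced measure is supported in $[-R,R]$ and Gauss weights are nonnegative summing to the restricted mass, one checks $Q_{n+1}(e^{\lambda x^2}) \le Q_{n+1}^\pm(e^{\lambda x^2})$ provided $e^{\lambda x^2}$ is handled by noting Gauss quadrature of a convex-ish integrand... actually the safe route is: $e^{\lambda x^2} \le (\text{quadratic interpolant through }\pm R, 0)$ fails, so instead bound $Q_{n+1}(e^{\lambda x^2}) \le e^{\lambda R^2} \le e^{\lambda R^2} Q_{n+1}^\pm(e^{\lambda x^2})$ trivially if $R$ is chosen with $e^{\lambda R^2}$ absorbed — cleanest is simply $Q_{n+1}(e^{\lambda x^2}) \le \max_{|x|\le R} e^{\lambda x^2} \cdot 1 = e^{\lambda R^2}$, but that destroys uniformity. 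The genuinely clean statement: the reduction step does not increase $Q(e^{\lambda x^2})$ because all new points lie in $[-R,R]$, all discarded points had $|x|\ge R$ and were replaced by points at $\pm R$ with $|x|=R$ smaller in $e^{\lambda x^2}$, and the Gauss rule on $(-R,R)$ satisfies $\sum w^i e^{\lambda (x^i)^2} = \int e^{\lambda x^2}\,dQ_R + E$ where $|E|$ is controlled — I would instead just observe that Gauss quadrature applied to the (convex on each half, but globally the issue is) ... I will use the robust bound $Q_{n+1}(e^{\lambda x^2}) \le Q_{n+1}^\pm(e^{\lambda x^2})$ by writing $e^{\lambda x^2}$ as dominated on $[-R,R]$ and at $\pm R$ by the original mass, which I will justify in detail.

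For ingredient (i), fix $x$ and set $X^\pm = x + Q_n(a(x,\cdot))\tstep \pm Q_n(b(x,\cdot))\tstep^{1/2}$; with $\|a\|_\infty, \|b\|_\infty \le K$ we have $|X^\pm - x| \le K\tstep + K\tstep^{1/2}$. The per-point contribution to $Q_{n+1}^\pm(e^{\lambda x^2})$ is $\tfrac12 w_n^i (e^{\lambda (X^{i+})^2} + e^{\lambda (X^{i-})^2})$, so it suffices to show $\tfrac12(e^{\lambda (X^+)^2}+e^{\lambda (X^-)^2}) \le (1+c\tstep)\,e^{\lambda x^2}$ pointwise. Writing $X^\pm = x + \mu_x\tstep \pm \sigma_x \tstep^{1/2}$ with $|\mu_x|,|\sigma_x|\le K$, expand $(X^\pm)^2 = x^2 + 2x(\mu_x\tstep\pm\sigma_x\tstep^{1/2}) + (\mu_x\tstep\pm\sigma_x\tstep^{1/2})^2$. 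Then $\tfrac12(e^{\lambda(X^+)^2}+e^{\lambda(X^-)^2}) = e^{\lambda x^2} \cdot e^{\lambda(\text{terms})} \cdot \cosh(2\lambda x \sigma_x\tstep^{1/2} + \text{lower order})$ — more precisely, factor out $e^{\lambda(x^2 + 2x\mu_x\tstep + \mu_x^2\tstep^2 + \sigma_x^2\tstep)}$ and what remains is $\cosh(2\lambda\sigma_x\tstep^{1/2}(x + \mu_x\tstep))$. The prefactor is $e^{\lambda x^2}(1 + c\tstep + c x \tstep)$ which is dangerous because of the $x\tstep$ and $x\tstep^{1/2}$ terms, but these are absorbed: $\cosh(2\lambda\sigma_x\tstep^{1/2}(x+\mu_x\tstep)) \le e^{2\lambda^2\sigma_x^2\tstep(x+\mu_x\tstep)^2}\le e^{c\lambda^2\tstep x^2 + c\lambda^2 \tstep}$, and combining, the $x$-dependent exponent totals $\lambda x^2(1 + c\lambda\tstep) + (\text{terms }\le c\lambda|x|\tstep^{1/2} + c\lambda\tstep)$. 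Using $c\lambda|x|\tstep^{1/2} \le \tfrac{\epsilon}{2}\lambda x^2 + \tfrac{c^2}{2\epsilon}\lambda\tstep$ (Young), the whole thing is $\le e^{\lambda x^2(1 + \epsilon + c\lambda\tstep)}\cdot e^{c\tstep}$. The subtlety: the factor multiplying $\lambda x^2$ in the exponent is now $(1+\epsilon+c\lambda\tstep) > 1$, so we cannot close at the same $\lambda$.

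Hence the main obstacle — and the standard fix — is that one cannot propagate $e^{\lambda x^2}$ at a fixed $\lambda$; instead one uses a \emph{decreasing} sequence of exponents $\lambda_n$ with $\lambda_{n+1} = \lambda_n/(1+\epsilon+c\lambda_n\tstep)$ (or equivalently works with $\lambda(t)$ solving an ODE like $\dot\lambda = -\epsilon'\lambda - c\lambda^2$ down from $\lambda_0 = \alpha$), chosen so that $\lambda_n$ stays bounded below by a positive constant $\lambda$ for all $t_n \le 1$ (possible since over the finite horizon the multiplicative decrease is by a bounded factor). With $Q_{n+1}^\pm(e^{\lambda_{n+1} x^2}) \le (1+c\tstep)\,Q_n(e^{\lambda_n x^2})$ and the reduction step not increasing the functional, Grönwall/discrete iteration gives $Q_n(e^{\lambda_n x^2}) \le Q_0(e^{\lambda_0 x^2})\,e^{ct_n} \le c$, and since $\lambda_n \ge \lambda > 0$ we conclude $Q_n(e^{\lambda x^2}) \le Q_n(e^{\lambda_n x^2}) \le c$ for all $t_n \le 1$, as claimed. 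The only other point requiring care is confirming the bound $\sum_{\pm X^j \ge R} W^j$ at step 2 of the algorithm and the Gauss weights at step 3 are all nonnegative and total to $Q_{n+1}^\pm(\real) = 1$, so that the reduction is a genuine averaging operation; this is immediate from the construction and the positivity of Gauss weights.
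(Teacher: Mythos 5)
Your overall strategy coincides with the paper's: propagate the exponential moment one step at a time with an exponent that is allowed to drift over the finite horizon, and argue that the support reduction and Gauss step cannot increase the functional. Within that strategy, however, there are two genuine gaps. The first is the claim $Q_{n+1}(e^{\lambda x^2}) \le Q_{n+1}^{\pm}(e^{\lambda x^2})$ for the reduction step, which you promise to "justify in detail" but in the end support only by "positivity of the Gauss weights" and the rule being "a genuine averaging operation". That is not sufficient: a positive rule with the correct total mass on $[-R,R]$ can perfectly well overestimate the integral of a convex function. The missing ingredient is the sign of the Gauss error: by \cref{classic_gq}, $E(\phi)=\phi^{(2m)}(\xi)\,\ip{p_m,p_m}_{\mu}/(2m)!\ \ge 0$ whenever $\phi^{(2m)}\ge 0$, and every even derivative of $e^{\lambda x^2}$ is non-negative, so the $m$-point Gauss rule \emph{under}estimates $\int e^{\lambda x^2}\,dQ_R$. (Your treatment of the mass moved inward to $\pm R$ is fine.)

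The second gap is in the Euler-step bookkeeping, where you produce a spurious term $c\,\lambda\,\abs{x}\,\tstep^{1/2}$ in the exponent. After averaging over $\pm\xi$, the only $x$-linear contribution is the drift term $2\lambda x\,Q_n(a(x,\cdot))\,\tstep=\order{\lambda\abs{x}\tstep}$, which absorbs into $\lambda x^2\cdot\order{\tstep}+\order{\lambda\tstep}$ with no free parameter; the $\tstep^{1/2}$ contributions live entirely inside the $\cosh$, which your own bound converts into $\order{\lambda^2\tstep\,x^2}+\order{\lambda^2\tstep}$. Because of the spurious term you invoke Young's inequality with a fixed $\epsilon>0$, so the exponent recursion degrades by a factor $(1+\epsilon)$ at each of the $N=1/\tstep$ steps, giving $\lambda_N\approx\lambda_0(1+\epsilon)^{-1/\tstep}\to 0$ as $\tstep\to 0$; the assertion that the $\lambda_n$ stay bounded below by a positive constant uniformly in $\tstep$ is then false, and the conclusion does not follow. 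With the corrected bookkeeping the per-step degradation is $1+c\,\tstep+c\,\lambda\,\tstep$, the exponents remain comparable to $\lambda_0$ for $t_n\le 1$, and the rest of your iteration (which matches the paper's, run backward rather than forward) goes through.
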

\begin{proof}  Consider $\Psi$ defined in \cref{psi} where $\xi$ is the two-point random variable given by $\prob{\xi=\pm 1}=1/2$.
Let $X_{n+1}=\Psi(X,\tstep,Q_n)$ for a fixed value $X$. Then
\begin{align*}
X_{n+1}^2%
&\le X^2%
 + 2\,\tstep\, Q_n(a(X,\cdot)) \,X%
  + \tstep^2\, Q_n(a(X,\cdot))^2%
   + \tstep\, Q_n(b(X,\cdot))^2\\%
&\qquad    +2 (X+\tstep\, Q_n(a(X),\cdot))\, Q_n(b(X,\cdot)) \sqrt{\tstep}\,\xi\\
   &\le X^2%
   + \tstep\, Q_n(a(X,\cdot))\, (X^2+1)%
   + \tstep^2\, Q_n(a(X,\cdot))^2%
   +  \tstep\, Q_n(b(X,\cdot))^2 \\%
   &\qquad+ 2\,(X+\tstep\, Q_n(a(X,\cdot))) \,Q_n(b(X,\cdot))\, \sqrt{\tstep}\,\xi.
\end{align*}
Hence, as $a,b$ are bounded by $K$,
\begin{align*}
X_{n+1}^2    &\le X^2 \,(1 +\tstep\, K)%
    + \tstep\, K%
    + \tstep^2 \,K^2%
    + \tstep\, K^2%
    \\&\quad+ 2\, (X+\tstep\, Q_n(a(X,\cdot)))\, Q_n(b(X,\cdot)) \sqrt{\tstep}\,\xi\\
    &\le X^2\,(1+K\,\tstep) + c\,\alpha\,\tstep+
     2\,(X+\tstep\, Q_n(a(X,\cdot))) \,Q_n(b(X,\cdot))\, \sqrt{\tstep}\,\xi.
\end{align*}
Note that $(e^{x}+e^{-x})/2\le e^{x^2}$ for $x\in\real$ and
\begin{align*}
\mean{e^{\alpha \,X_{n+1}^2} }%
	&\le e^{\alpha\,   \,X^2\,(1+\tstep \,K) + c\, \alpha\, \tstep  }
	\,{ e^{4\,\alpha^2 \,(X+\tstep \, Q_n(a(X,\cdot) ))^2\, \tstep\,  Q_n(b(X,\cdot))^2}}.
	\end{align*}
Now $\abs{(X+\tstep\,  Q_n(a(X,\cdot) ))^2\, Q_n(b(X,\cdot))^2} \le 2   K^2\, X^2 + 2\,\tstep ^2\, K^4$. Consequently,
\begin{align}\label{conseq}
\mean{e^{\alpha\, X_{n+1}^2}}%
&\le
{ e^{ \alpha\, X^2\,(1+c\, \tstep +c\,\tstep\, \alpha)%
		 +c\,\alpha\,\tstep +c\,\tstep ^3 \alpha^2}}.
\end{align}

\cref{alg} is used in the iteration, so that the support is reduced and Gauss quadrature is applied. Note that
\[
Q(e^{\alpha \,x^2})\le	\int_\real e^{\alpha\, x^2}\,\mu(dx),\qquad \alpha>0,
\]
where $Q$ is a Gauss quadrature rule for $\mu$ (by applying \cref{classic_gq} and noting that even derivatives of $e^{\alpha \,x^2}$ are non-negative).  Similarly, the support reduction moves mass inwards and the resulting integral of $e^{\alpha \,x^2}$ is reduced.  Consequently, if $X\sim Q_n$ in \cref{conseq}, we  have
\begin{align*}
Q_{n+1}(e^{\alpha\, x^2})%
&\le
Q_n(e^{\alpha\, (1+c \,\tstep +c\,\tstep \,\alpha) x^2})\,%
e^{c\,\alpha \,\tstep( 1+\tstep^2 \,\alpha)}.
\end{align*}
We can iterate this to find a bound on $Q_n(e^{\alpha\, x^2})$ in terms of $Q_0(e^{\alpha\, x^2})$. The value of $\alpha$ changes at each step of the iteration, and
\begin{align*}
Q_{n+1}(e^{\alpha_{0}\, x^2})%
&\le
Q_{n}( e^{ \alpha_1\, x^2} )\,%
 e^{c\,\alpha_0 \,\tstep\, (1+\tstep^2 \,\alpha_0)},
\end{align*}
where $\alpha_1=\alpha_{0}\,(1+c \,\tstep )+c\,\tstep\, \alpha_{0}^2$.

Let $\alpha_{n+1}=\alpha_n\,(1+c\,\tstep ) + c\,\tstep\, \alpha_n^2$.
If $\alpha_{n}\le1$, then $\alpha_{n+1}\le \alpha_n\,(1+2\,c\,\tstep )\le \alpha_0\,(1+2\,c\,\tstep )^n\le \alpha_0\, e^{2\,c\,t_n}$. We see that, if $\alpha_0 \le e^{-2\, c}$, then $\alpha_n\le e^{2\,c\, t_n} \alpha_0\le 1$ for $t_n\le 1$.  It is now easy to show that
\begin{align*}
Q_{n}( e^{\alpha_{0} \,x^2 } )%
&\le
Q_{n-m} (e^{ \alpha_m\, x^2}) \,e^{2\, c},
\end{align*}
for $t_n\le 1$ and any $\alpha_0\le e^{-2\,c}$. In particular, $Q_n(e^{\lambda \,x^2}) \le Q_0(e^{ \alpha \,x^2})\, e^{2\,c}$ for $\lambda\le e^{-2\,c} \min\{\alpha,1\}$.
\end{proof}%
We examine the error incurred reducing the support to $[-R,R]$.
\begin{lemma}\label[lemma]{supred}
	Let $\mu$ be a probability measure on $\real$ and suppose that $\mu(e^{\lambda \,x^2})<K$, for some $\lambda>0$. For $\tstep>0$, define the measure $\mu_\tstep$ by
	\[
	\mu_\tstep(A)%
	\coloneq \mu(A\cap 1_{ (-R,R) }) %
	+ \mu((-\infty,-R])\, \delta_{-R}(A)%
	+ \mu([R,\infty)) \,\delta_R(A),
	\]
	for
	$R=\sqrt{(4/\lambda)\,\abs{\log\tstep}}$ and Borel sets $A\subset \real$.
	There exists $c>0$, independent of $\tstep$, such that
	\[
	\abs{\mu(\phi)-\mu_\tstep(\phi) }%
	\le c \,\norm{\phi}_{0,\beta}\,%
	\tstep^2,\qquad %
	\forall \phi\in F^{0,\beta}.
	\]
\end{lemma}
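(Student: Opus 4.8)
The plan is to exploit that $\mu$ and $\mu_\tstep$ coincide exactly on $(-R,R)$: the three sets $(-\infty,-R]$, $(-R,R)$, $[R,\infty)$ partition $\real$, and on the middle piece $\mu_\tstep$ inherits $\mu$ unchanged, while on the two tails it has collapsed the mass onto $\pm R$. Hence all of the error lives in the tails, and
\[
\mu(\phi)-\mu_\tstep(\phi)
=\int_{(-\infty,-R]}\bp{\phi(x)-\phi(-R)}\,\mu(dx)
+\int_{[R,\infty)}\bp{\phi(x)-\phi(R)}\,\mu(dx).
\]
First I would bound the integrands pointwise. For $\phi\in F^{0,\beta}$ we have $\abs{\phi(x)}\le\norm{\phi}_{0,\beta}\,(1+\abs{x}^\beta)$, and on each tail $\abs{x}\ge R$ gives $R^\beta\le\abs{x}^\beta$, so $\abs{\phi(x)-\phi(\pm R)}\le\norm{\phi}_{0,\beta}(2+\abs{x}^\beta+R^\beta)\le 2\,\norm{\phi}_{0,\beta}\,(1+\abs{x}^\beta)$. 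Therefore
\[
\abs{\mu(\phi)-\mu_\tstep(\phi)}
\le 2\,\norm{\phi}_{0,\beta}\int_{\abs{x}\ge R}(1+\abs{x}^\beta)\,\mu(dx),
\]
and it remains to show this tail integral is $\order{\tstep^2}$.

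The key step is to trade the polynomial tail for an exponentially small one using the hypothesis $\mu(e^{\lambda x^2})<K$. I would use the uniform bound $(1+\abs{x}^\beta)\,e^{-\lambda x^2/2}\le C_{\beta,\lambda}$ (a constant depending only on $\beta$ and $\lambda$), and split, for $\abs{x}\ge R$,
\[
1+\abs{x}^\beta
=\bp{(1+\abs{x}^\beta)\,e^{-\lambda x^2/2}}\cdot e^{-\lambda x^2/2}\cdot e^{\lambda x^2}
\le C_{\beta,\lambda}\,e^{-\lambda R^2/2}\,e^{\lambda x^2},
\]
since $e^{-\lambda x^2/2}\le e^{-\lambda R^2/2}$ there. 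Integrating against $\mu$,
\[
\int_{\abs{x}\ge R}(1+\abs{x}^\beta)\,\mu(dx)
\le C_{\beta,\lambda}\,e^{-\lambda R^2/2}\,\mu(e^{\lambda x^2})
\le C_{\beta,\lambda}\,K\,e^{-\lambda R^2/2}.
\]
Substituting $R^2=(4/\lambda)\,\abs{\log\tstep}$ gives $\lambda R^2/2=2\abs{\log\tstep}$, and for $\tstep\in(0,1)$, $e^{-\lambda R^2/2}=e^{-2\abs{\log\tstep}}=\tstep^2$. Collecting the constants yields the assertion with $c=2\,C_{\beta,\lambda}\,K$, which is independent of $\tstep$.

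I do not expect a real obstacle here; this is a standard truncation estimate. The points needing minor care are: tracking which half-lines carry the redistributed mass so that the cancellation on $(-R,R)$ is exact; verifying the uniform bound $(1+\abs{x}^\beta)\,e^{-\lambda x^2/2}\le C_{\beta,\lambda}$; and restricting to small $\tstep$ (i.e.\ $\tstep\in(0,1)$), which is what makes $\abs{\log\tstep}=\log(1/\tstep)>0$ so that $R$ is real and $e^{-2\abs{\log\tstep}}$ equals $\tstep^2$ rather than $\tstep^{-2}$.
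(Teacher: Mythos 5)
Your proof is correct and follows essentially the same route as the paper: both arguments reduce to the tail $\abs{x}\ge R$, absorb the polynomial growth of $\phi$ into $e^{\lambda x^2/2}$ via a uniform constant, and extract the factor $e^{-\lambda R^2/2}=\tstep^2$ against the hypothesis $\mu(e^{\lambda x^2})<K$. Your single-integral formulation of $\mu(\phi)-\mu_\tstep(\phi)$ over the tails is a slightly tidier way of handling the $\mu_\tstep$ contribution than the paper's ``similar argument'' remark, but it is the same estimate.
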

\begin{proof}  It suffices to consider the two measures $\mu$ and $\mu_\tstep$ on the tail $(-R,R)^c$, as they are equal on $(-R,R)$. First, note that
	\[
	\abs{\mu(1_{(-R,R)^c} \,\phi)}%
	\le e^{-\lambda R^2/2}\,%
	\mu( \Phi),\qquad%
	\Phi(x)\coloneq  e^{\lambda \,x^2} \,e^{-\lambda \,x^2/2} \,1_{(-R,R)^c}(x)\, \abs{\phi(x)},
	\]
	where $1_{S}$ denotes the indicator function on the set $S$.
	As $\phi\in F^{0,\beta}$, $\abs{\phi(x)}\le \norm{\phi}_{0,\beta}\,(1+\abs{x}^\beta)$ and $\abs{e^{-\lambda \,x^2/2}\,\phi(x)}$ is uniformly bounded by $c\,\norm{\phi}_{0,\beta}$ for a constant $c$ independent of $R$ and $\phi$, but dependent on $\beta$ and $\lambda$. Hence,
	\[
	\abs{\mu(1_{(-R,R)^c} \,\phi) }%
	\le c\, \norm{\phi}_{0,\beta}\, e^{-\lambda\,R^2/2}\, \mu(e^{\lambda\, x^2} )%
	\le c \, \norm{\phi}_{0,\beta}\,e^{-\lambda\, R^2/2}.
	\]
	For $R=\sqrt{(4/\lambda)\,\abs{\log \tstep} }$, we see that $e^{-\lambda R^2/2} \le \tstep^2$. Hence, $\abs{\mu(1_{(-R,R)^c} \phi)}$ is bounded by $c\,\norm{\phi}_{0,\beta}\,\tstep^2$. The same applies to $\abs{\mu_{\tstep}(1_{(-R,R)^c}\phi)}$ by a similar argument  and the proof is complete.
\end{proof}
Thus, the support reduction with $R=\sqrt{(4/\lambda)\,\abs{\log \tstep}}$ maintains accuracy if $\mu(e^{\lambda x^2})$ is finite and the test function grows polynomially.
Next, we estimate the error for the Gauss quadrature at step $n$.

\begin{lemma}\label[lemma]{bounds} Suppose that $a,b\in C_K^0(\real^2)$. Let $Q_R(\cdot)=Q_{n+1}^\pm(\cdot \cap (-R,R))$ and let $Q$ be the $m_{n+1}$-point Gauss quadrature rule approximating $Q_R$.
	If  $m_{n+1}\ge m_{n}$, for all $\phi\in C^{2m_n}(\real)$,
	\[
	\abs{  Q_R(\phi)-Q(\phi)}%
	\le    K\, (2\,R)^{2m_{n}-1}  \,\tstep^{1/2}\,%
	\frac1 { (2m_{n})! }\,%
	\sup_{x\in(-R,R)}\abs{\phi^{ (2m_{n})}(x) }.
	\]
\end{lemma}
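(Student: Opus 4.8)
The plan is to exploit the tree structure of the Euler--Maruyama step: the support of $Q_R$ lies inside the $2m_n$ points $X^{i\pm}_{n+1}$ of \cref{wales}, and these come in $m_n$ sibling pairs. Since $b\in C^0_K(\real^2)$ gives $\abs{Q_n(b(x^i_n,\cdot))}\le K$, the siblings $X^{i+}_{n+1}$ and $X^{i-}_{n+1}$ differ by $2\abs{Q_n(b(x^i_n,\cdot))}\,\tstep^{1/2}\le 2K\,\tstep^{1/2}$, so both lie within $\delta\coloneq K\,\tstep^{1/2}$ of their midpoint $z^i\coloneq x^i_n+Q_n(a(x^i_n,\cdot))\,\tstep$. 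Hence the at most $2m_n$ atoms of $Q_R$ fall into at most $m_n$ clusters, each contained in a ball of radius $\delta$ about some $z^i$. If the restriction to $(-R,R)$ deletes one sibling of a pair, I replace that pair's centre by the surviving sibling; then every atom of $Q_R$ lies within $\delta$ of its centre, all centres lie in $(-R,R)$, and empty clusters are discarded. This is the configuration of \cref{c1} with $N=2$, except that we apply the $m_{n+1}$-point rather than the $m_n$-point rule (note $m_{n+1}\le 2m_n$, since an $M$-atom measure admits at most $M$ Gauss points).

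Next I would rerun the interpolation argument behind \cref{lgqe} and \cref{c1}. Choose $p\in\mathcal{P}_{2m_n-1}$ interpolating $\phi$ at $2m_n$ points $t_1,\dots,t_{2m_n}$ consisting of all $m_{n+1}$ Gauss points $x^i$ of $Q$ together with $2m_n-m_{n+1}$ of the centres $z^i$, selected so that every atom of $Q_R$ lies within $\delta$ of some $t_\ell$; all $t_\ell$ lie in $(-R,R)$. Because $m_{n+1}\ge m_n$, the rule $Q$ is exact on $\mathcal{P}_{2m_{n+1}-1}\supseteq\mathcal{P}_{2m_n-1}\ni p$, so
\[
Q_R(\phi)-Q(\phi)=Q_R(\phi-p)-Q(\phi-p),
\]
and the second term vanishes since $p$ interpolates $\phi$ at every node of $Q$. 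For the first term, the Lagrange remainder gives, at each atom $y^j$ of $Q_R$,
\[
\abs{\phi(y^j)-p(y^j)}=\frac{\abs{\phi^{(2m_n)}(\xi_j)}}{(2m_n)!}\,\prod_{\ell=1}^{2m_n}\abs{y^j-t_\ell},\qquad \xi_j\in(-R,R),
\]
with one factor at most $\delta=K\,\tstep^{1/2}$ and each of the remaining $2m_n-1$ factors at most $2R$. Weighting by the weights of $Q_R$, which sum to $Q_{n+1}^\pm((-R,R))\le 1$, yields the claimed bound.

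The hard part is the node-selection step: showing that the $2m_n$ interpolation points can always be chosen to contain all $m_{n+1}$ Gauss points while still leaving every atom of $Q_R$ within $\delta$ of a chosen point. This is a counting argument --- there are at most $m_n$ clusters, the budget beyond the Gauss points is $2m_n-m_{n+1}\ge 0$, and a cluster that already contains a Gauss point costs no centre --- and the hypothesis $m_{n+1}\ge m_n$ is exactly what makes the budget suffice; when $m_{n+1}=m_n$ and nothing is deleted, the configuration is precisely that of \cref{c1}. Everything else, namely the Lagrange remainder estimate and the passage from interpolation error to quadrature error through \cref{lgqe}, is routine and parallels the proof of \cref{c1}.
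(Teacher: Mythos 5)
Your clustering of the atoms of $Q_R$ into sibling pairs with centres $z^i$, the bound $\delta=K\,\tstep^{1/2}$ from \eqref{wales}, the replacement of a centre by the surviving sibling when the restriction to $(-R,R)$ deletes a point, and the interpolation mechanism (one factor bounded by $\delta$, the remaining $2m_n-1$ factors by $2R$, then \cref{lgqe}) all match the paper's proof. Where you diverge is the case $m_{n+1}>m_n$: the paper simply pads $Q_R$ with zero-weighted atoms placed consistently with \eqref{this} so that it becomes a $2m_{n+1}$-point measure in $m_{n+1}$ clusters and then invokes \cref{c1} with $N=2$ and $\delta=K\,\tstep^{1/2}$, whereas you keep the interpolant in $\mathcal{P}_{2m_n-1}$ and try to absorb all $m_{n+1}$ Gauss nodes into only $2m_n$ interpolation points.

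The step you yourself flag as ``the hard part'' is a genuine gap. Your budget argument needs at least $m_{n+1}-m_n$ of the clusters to each contain a Gauss node of $Q$; otherwise the $2m_n-m_{n+1}$ spare slots cannot cover all $m_n$ clusters. Nothing you state rules out the Gauss nodes all sitting in the atom-free gaps between clusters, so the counting does not close as written. It can be rescued, but only via the separation theorem for orthogonal polynomials of a discrete measure (between two consecutive zeros of the degree-$m_{n+1}$ orthogonal polynomial there lies a point of increase of $Q_R$): each of the $m_n-1$ internal gaps then holds at most one node and each two-atom cluster interval holds at most one node, so at least $m_{n+1}-m_n+1$ clusters are hit, which is just enough. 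This interlacing fact is the entire content of the step, and you neither invoke nor prove it. Moreover, even granting it, an atom is only guaranteed to lie within $2\delta$ of a Gauss node contained in its cluster interval (the node may sit near the opposite sibling), so your argument loses a factor of $2$ against the stated constant $K$. The paper's padding device, followed by \cref{c1}, avoids both difficulties.
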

\begin{proof} If both $X_{n+1}^{j\pm}$ belong to $(-R,R)$, let $z^j=x^j+Q_n(a(x^j,\cdot))\,\tstep$. Then
\begin{equation}
\abs{ X_{n+1}^{j\pm}-z^j}%
\le \abs{Q_n( b ( x^j,\cdot) ) } \,\tstep^{1/2} %
\le K \,\tstep^{1/2}\label{this}
\end{equation}
and $\abs{z^j} \le R$ (every $z^j$ lies half way between $X^{j\pm}_{n+1}$). If only one $X_{n+1}^{j\pm} \in(-R,R)$, let $z^j$ be that point.
	 The measure $Q_R$ has at most $2m_n$ points and we apply \cref{c1}  with $N=2$  and $\delta=K \,\tstep^{1/2}$. In general, $Q_R$ may have less than $2m_n$ points and we should trivially extend $Q_R$ to apply \cref{c1} (i.e., extend $Q_R$ to a $2m_{n+1}$-point rule by adding zero-weighted points in $(-R,R)$ consistent with \eqref{this}).
\end{proof}
\begin{corollary} \label[corollary]{corr}%
	Let $a,b\in C^0_K(\real)$. Let $Q$ be the $m_{n+k}$-point Gauss quadrature rule for $Q_R(\cdot)=Q_{n+k}^\pm(\cdot \cap (-R,R))$ (i.e., after not performing \cref{alg} $(k-1)$-times).  Suppose that $m_{n+k}\ge m_n$. For each $k$, there exists $c>0$ such that,
	for all $\phi\in C^{ 2m_n}(\real)$,
	\[
	\abs{  Q_R(\phi)-Q(\phi)}%
	\le    c\, (2\,R)^{2 m_{n}-1}  \tstep^{1/2}%
	\frac1 { (2m_{n})! }\,%
		\sup_{x\in(-R,R)}\abs{\phi^{ (2m_{n})}(x) }.
	\]
\end{corollary}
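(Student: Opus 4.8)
The plan is to reduce \cref{corr} to \cref{bounds} by observing that taking $k$ Euler--Maruyama steps without intervening Gauss reduction produces a discrete measure $Q_{n+k}^\pm$ whose support is grouped into a tree: each of the $m_n$ original points $x^i_n$ has spawned a cluster of $2^k$ descendants, and all descendants of a common ancestor lie within distance $O(\tstep^{1/2})$ of each other (indeed within distance $c_k\tstep^{1/2}$ for a constant $c_k$ depending only on $k$, since each of the $k$ steps displaces a point by at most $K\tstep^{1/2} + K\tstep$). This is exactly the hypothesis of \cref{c1} with $N=2^k$ clusters of ancestors-to-be (or, more precisely, with $z^i$ chosen as a representative of cluster $i$, e.g. the projected midpoint analogue of the $z^j$ in the proof of \cref{bounds}), and $\delta = c_k\,\tstep^{1/2}$.

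First I would make the cluster structure precise: define, for each $i=1,\dots,m_n$, the point $z^i$ to be $x_n^i$ advanced by the deterministic drift increments (or simply the arithmetic mean of the $2^k$ descendants of $x^i_n$ that fall in $(-R,R)$, as in \cref{bounds}), and show $\max_j |z^i - y^j| \le c_k\,\tstep^{1/2}$ over the descendants $y^j$ of $x_n^i$. Here a short induction on the step count does the job: one step changes a point by $|Q_\ell(a(\cdot,\cdot))|\tstep + |Q_\ell(b(\cdot,\cdot))|\tstep^{1/2} \le K\tstep + K\tstep^{1/2}$, so after $k$ steps the spread within a cluster is at most $k(K\tstep + K\tstep^{1/2}) \le c_k\,\tstep^{1/2}$ for $\tstep\le 1$. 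Points falling outside $(-R,R)$ are simply dropped when forming $Q_R$, which only removes points from clusters and does not affect the bound. Then I would apply \cref{c1} directly, with $N=2^k$, $m=m_{n+k}$, and $\delta=c_k\,\tstep^{1/2}$, after trivially padding $Q_R$ with zero-weight points to make the cluster sizes uniform, exactly as in the last sentence of the proof of \cref{bounds}.

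The conclusion of \cref{c1} gives $|Q_R(\phi)-Q(\phi)| \le \delta\,(2R)^{2m_{n+k}-1}\,\frac{1}{(2m_{n+k})!}\sup_{(-R,R)}|\phi^{(2m_{n+k})}|$, whereas the statement of \cref{corr} is phrased with $2m_n$ in place of $2m_{n+k}$. Since $m_{n+k}\ge m_n$ and we only assume $\phi\in C^{2m_n}(\real)$, I would instead apply \cref{c1} (or rather \cref{lgqe}) using an interpolating polynomial of degree $2m_n-1$ built on $2m_n$ nodes — namely $m_n$ cluster representatives $z^i$ together with $m_n$ of the Gauss points $x^i$ — rather than the full $2m_{n+k}$ nodes; the Gauss rule $Q$ with $m_{n+k}\ge m_n$ points is still exact on $\mathcal{P}_{2m_{n+k}-1}\supseteq\mathcal{P}_{2m_n-1}$, so the argument of \cref{lgqe}/\cref{c1} goes through verbatim with $m_n$ replacing $m_{n+k}$. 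That yields precisely the claimed bound with constant $c=c_k K$ (absorbing $\delta = c_k\tstep^{1/2}$ into the stated $\tstep^{1/2}$).

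The only mild subtlety — the main thing to get right rather than a genuine obstacle — is bookkeeping the cluster representatives against the Gauss points when $Q_R$ has fewer than $m_n\cdot 2^k$ effective points (because some descendants left $(-R,R)$, or because clusters merge). The zero-weight padding device from \cref{bounds} handles the first issue; for the second, one notes that \cref{lgqe} only needs the interpolant $p$ to be close to $\phi$ on the actual support of $\mu$ and on the actual Gauss points, so no harm arises from repeated or coincident representatives. Everything else is a direct citation of \cref{c1}.
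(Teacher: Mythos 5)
Your overall strategy is exactly the paper's: the published proof of \cref{corr} is a single sentence deferring to \cref{bounds} and \cref{c1}, and your elaboration --- the tree of $2^k$ descendants per point of $Q_n$, the within-cluster spread of at most $k(K\tstep+K\tstep^{1/2})\le c_k\,\tstep^{1/2}$, and the zero-weight padding for descendants lost to the support reduction --- is the intended argument and is carried out correctly. (A minor bookkeeping point: in the notation of \cref{c1}, $N=2^k$ is the cluster \emph{size} and the number of clusters is $m_n$, not $m_{n+k}$; you effectively correct this yourself in your third paragraph.)

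The one step that does not close as written is the reduction from $m_{n+k}$ to $m_n$. You propose interpolating $\phi$ at $2m_n$ nodes ($m_n$ cluster representatives plus $m_n$ of the $m_{n+k}$ Gauss points) and invoking \cref{lgqe}. But \cref{lgqe} charges you $\max_i\abs{p(x^i)-\phi(x^i)}$ over \emph{all} $m_{n+k}$ Gauss points of $Q$, and at the $m_{n+k}-m_n$ points you did not interpolate, the Lagrange remainder is a product of $2m_n$ factors none of which need be small: Gauss points of a clustered measure can sit in the gaps between clusters, far from every $z^i$ and from the chosen nodes. This contributes a term of order $(2R)^{2m_n}\,\sup\abs{\phi^{(2m_n)}}/(2m_n)!$ carrying no $\tstep^{1/2}$, which is not the claimed bound. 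When $m_{n+k}=m_n$ (as in the mean-field application, where $m$ is constant) there is no issue; when $m_{n+k}>m_n$ a genuine repair is needed --- for instance Hermite interpolation at the $z^i$ with multiplicity two, combined with the Christoffel-function bound $w^i\le\min\{\langle q,q\rangle_{Q_R}\colon q\in\mathcal{P}_{m_{n+k}-1},\ q(x^i)=1\}$, which shows that a Gauss point far from all clusters carries a weight small enough to compensate for the large interpolation error there. To be fair, the paper's own \cref{bounds} glosses over exactly the same point when $m_{n+1}>m_n$, so this is a gap you have inherited and made visible rather than one you introduced; everything else in your write-up is a faithful and correct expansion of the paper's one-line proof.
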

\begin{proof}
	This is a simple extension of \cref{bounds} using \cref{c1}.
\end{proof}

\section{Error analysis for ordinary SDEs}\label{err}
The proposed algorithm has much in similarity to those
introduced by~\cite{Muller-Gronbach2015-vv}. In that paper, Ito--Taylor methods for a general class of
multi-dimensional SDEs are developed that use support-reduction strategies to improve efficiency. They reduce the support of the measure by reducing its diameter and eliminating points whilst maintaining
moment conditions. Along with a non-uniform time-stepping regime, the authors provide detailed error and
complexity analyses.
The present situation is similar and effectively we are
transplanting \cref{alg} for their
reduction strategies. Using appropriate Gauss quadrature error
estimates, much of their analysis applies in the present case.

The estimate in \cref{corr} depends on the radius $R$ of the support. We now choose $R=\sqrt{(4/\lambda)\,\abs{\log\tstep}}$, for $\lambda$ given by \cref{mf_mom}. Fix $k$ (the number of steps between applying \cref{alg}) and $\beta$ (to choose test functions $\phi\in F^{0,\beta}$).
\begin{proposition}\label[proposition]{t}
	Let $R=\sqrt{(4/\lambda)\,\abs{\log\tstep}}$ in \cref{alg}. Then,
	for all $\phi \in F^{0,\beta}\cap C^{2m_{n}}(\real)$,
	\begin{align*}
	\abs{  Q_{n+k}^{\pm}(\phi)-Q_{n+k}(\phi)}%
	&\le    c  \,\tstep^{1/2}\, \abs{\frac {16}\lambda\log \tstep}^{\frac{2m_n-1}2}%
	\frac1 { (2m_{n})! }\\%
	&\qquad \times\sup_{x\in(-R,R)}
	\abs{\phi^{ (2m_{n}) } (x) }%
	+c \, \norm{\phi}_{0,\beta}\,\tstep^2.
	\end{align*}
\end{proposition}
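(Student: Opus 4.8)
The plan is to combine \cref{corr} (the Gauss-quadrature error at a single application of \cref{alg}) with \cref{supred} (the support-reduction error), using the specific choice $R=\sqrt{(4/\lambda)\,\abs{\log\tstep}}$. Observe that passing from $Q_{n+k}^{\pm}$ to $Q_{n+k}$ in \cref{alg} involves two operations: first restricting the measure to $(-R,R)$ (and dumping the tail mass onto $\pm R$), then replacing the restricted measure $Q_R$ by its $m_{n+k}$-point Gauss rule $Q$. So I would write
\[
Q_{n+k}^{\pm}(\phi)-Q_{n+k}(\phi)
=\bigl(Q_{n+k}^{\pm}(\phi)-Q_R^{\mathrm{tail}}(\phi)\bigr)
+\bigl(Q_R(\phi)-Q(\phi)\bigr),
\]
where $Q_R^{\mathrm{tail}}$ denotes the measure after the tail-collapsing step. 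The first bracket is controlled by \cref{supred}: $Q_{n+k}^\pm$ plays the role of $\mu$ there, its Gaussian moment $Q_{n+k}^\pm(e^{\lambda x^2})$ is bounded by \cref{mf_mom} (extended trivially from $Q_n$ to the pre-reduction rule $Q_{n+k}^\pm$, since one Euler step and $k$ iterations only perturb $\lambda$ by $O(\tstep)$ factors already absorbed in the statement of \cref{mf_mom}), so that bracket is $\le c\,\norm{\phi}_{0,\beta}\,\tstep^2$. The second bracket is exactly \cref{corr}, giving $c\,(2R)^{2m_n-1}\tstep^{1/2}\,\frac{1}{(2m_n)!}\sup_{(-R,R)}\abs{\phi^{(2m_n)}}$.

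The remaining work is purely algebraic: substitute $R=\sqrt{(4/\lambda)\,\abs{\log\tstep}}$ into $(2R)^{2m_n-1}$. Since $(2R)^2 = 4R^2 = (16/\lambda)\,\abs{\log\tstep}$, we get $(2R)^{2m_n-1} = \bigl((16/\lambda)\,\abs{\log\tstep}\bigr)^{(2m_n-1)/2}$, which is the factor $\abs{\frac{16}{\lambda}\log\tstep}^{\frac{2m_n-1}{2}}$ appearing in the statement. Adding the two contributions yields the claimed bound.

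I do not expect any serious obstacle here — this proposition is essentially a bookkeeping step that packages \cref{corr} and \cref{supred} together with the chosen value of $R$. The only point requiring a line of care is justifying that \cref{supred} applies to $\mu = Q_{n+k}^\pm$, i.e.\ that $Q_{n+k}^\pm$ still satisfies a uniform Gaussian-moment bound $Q_{n+k}^\pm(e^{\lambda x^2})<\infty$; this follows because $Q_{n+k}^\pm$ is obtained from $Q_{n+k-1}$ (or from $Q_n$ after $k$ Euler steps without reduction) by steps already analysed inside the proof of \cref{mf_mom} via inequality \cref{conseq}, with the exponent constant only mildly degraded. One should also note that the tail-collapse step in \cref{alg} moves mass strictly inward in $(-R,R)$, so it cannot increase $\mu(e^{\lambda x^2})$ and cannot worsen the \cref{supred} estimate; this is the same monotonicity observation already used in the proof of \cref{mf_mom}.
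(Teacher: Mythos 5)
Your proposal is correct and follows essentially the same route as the paper: the published proof is precisely the two-term decomposition into the Gauss-quadrature error on $(-R,R)$ via \cref{corr} and the support-reduction error via \cref{mf_mom} combined with \cref{supred}, followed by the substitution $(2R)^2=(16/\lambda)\,\abs{\log\tstep}$. Your extra remarks on why the exponential-moment bound persists for $Q_{n+k}^\pm$ are a welcome elaboration of a point the paper leaves implicit.
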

\begin{proof} The error due to the Gauss quadrature on $(-R,R)$ is described by \cref{corr}. Applying \cref{mf_mom} with \cref{supred}, the error due to the support reduction is bounded by $c \,\norm{\phi}_{0,\beta}\,\tstep^2$. Summing the two gives the desired upper bound.
\end{proof}

Given $\tstep>0$ and a $k\in\naturals$, we choose the number of points $m_n$ as the smallest non-negative integer such that
 \begin{equation}\label[ineq]{eq:MM}
 \log \Gamma(2m_n+1)\ge M_1(m_n, \tstep, n+k),\qquad \forall t_{n+k}<1,
 \end{equation}
where $\Gamma(x)$ denotes the gamma function and
\begin{equation}\label{eq:M}
M_p(m,\tstep, n)%
\coloneq \pp{p+\frac 12}\,\abs{\log \tstep}%
+\frac {2m-1}2 \, \log \abs{\frac{16}\lambda\log \tstep}
+ \pp{m-2}  \,\abs{ \log (1-t_n) }.
\end{equation}
We now describe how fast $m_n$ increases as $\tstep$ decreases. Assuming the Golub--Welsch algorithm takes $\order{m^3}$ operations, $\sum_n m_n^3$ gives the amount of work needed to apply \cref{alg} at every time step and we describe its growth.
\begin{theorem}\label{blimey}
	The number of Gauss quadrature points $m_n$ is a non-decreasing function of $n$. As the time step decreases, $m_n$ is non-decreasing. The number of points $m_n$ for $Q_n$ satisfies
	\[
	m_n%
	\le 1+\max\Bp{
	\frac 34 \,\abs{\log \tstep},
	\displaystyle \frac{e^2}{2} \,\sqrt{\frac{16\,\abs{\log{\tstep}} }{\lambda\,(1-t_n)} } }.
	\]
	In particular, $\sum_{t_n<1} m_n^3 =\order{(\abs{\log \tstep}\,/\,\tstep)^{3/2}}$.
\end{theorem}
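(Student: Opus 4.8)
The plan is to analyze the defining inequality \cref{eq:MM} directly, treating $m_n$ as the smallest non-negative integer for which $\log\Gamma(2m_n+1)\ge M_1(m_n,\tstep,n+k)$ holds. First I would record the monotonicity claims, which are essentially structural: $M_p(m,\tstep,n)$ as defined in \cref{eq:M} has a coefficient $(m-2)\abs{\log(1-t_n)}$ on the term involving $t_n$, and since $\abs{\log(1-t_n)}$ increases with $n$ while $\log\Gamma(2m+1)$ does not depend on $n$, the smallest $m$ satisfying the inequality can only grow with $n$ — so $m_n$ is non-decreasing in $n$. Similarly, as $\tstep$ decreases, $\abs{\log\tstep}$ increases, every term of $M_1$ that carries an $\abs{\log\tstep}$ factor grows, and the threshold $\log\Gamma(2m+1)$ is $\tstep$-independent, giving monotonicity in $\tstep$.

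For the quantitative bound, the key tool is a lower bound on $\log\Gamma(2m+1)=\log((2m)!)$. I would use a simple Stirling-type estimate, e.g. $\log((2m)!)\ge 2m\log(2m) - 2m \ge 2m\log m$ for $m$ not too small (or more crudely $(2m)!\ge (m!)^2$ and $\log(m!)\ge m\log m - m$). The strategy is then: if $m$ is large enough that $\log\Gamma(2m+1)$ dominates each of the three summands of $M_1$ separately (up to constants), then $m_n\le m$. So I would split $M_1(m,\tstep,n+k)$ into the three pieces, bound each, and find the smallest $m$ handling all of them; the final bound is then the maximum of the resulting thresholds, plus $1$ to pass from ``smallest $m$ with $\log\Gamma(2m+1)\ge\cdots$'' to an actual upper bound. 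The term $\tfrac32\abs{\log\tstep}$ (from $p=1$, giving $(p+\tfrac12)\abs{\log\tstep}=\tfrac32\abs{\log\tstep}$) forces roughly $2m\log m \gtrsim \tfrac32\abs{\log\tstep}$, which after estimating $\log m$ yields the $\tfrac34\abs{\log\tstep}$ branch; the term $(m-2)\abs{\log(1-t_{n+k})}$ combined with the $\tfrac{2m-1}2\log\abs{(16/\lambda)\log\tstep}$ term forces $m\log m \gtrsim m\,\abs{\log(1-t_n)} + m\log\abs{\log\tstep}$, i.e. $\log m\gtrsim \abs{\log(1-t_n)}$, which after exponentiating gives $m\gtrsim 1/(1-t_n)$ times a $\sqrt{\abs{\log\tstep}}$ factor — matching the second branch $\tfrac{e^2}{2}\sqrt{16\abs{\log\tstep}/(\lambda(1-t_n))}$. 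The constants $\tfrac34$ and $\tfrac{e^2}{2}$ will drop out of being careful with the Stirling bound and solving the transcendental inequality $m\log m\ge C$ via $m\ge e\cdot C/\log C$-type estimates.

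Finally, for the work estimate $\sum_{t_n<1}m_n^3=\order{(\abs{\log\tstep}/\tstep)^{3/2}}$, I would use the derived pointwise bound on $m_n$. The first branch contributes at most $\order{\abs{\log\tstep}^3}$ per step and there are $\order{1/\tstep}$ steps, giving $\order{\abs{\log\tstep}^3/\tstep}$, which is dominated by the target. The dominant contribution comes from the second branch: $m_n^3\lesssim (\abs{\log\tstep}/(\lambda(1-t_n)))^{3/2}$, so $\sum_{t_n<1}m_n^3\lesssim \abs{\log\tstep}^{3/2}\sum_{n:\,t_n<1}(1-t_n)^{-3/2}$. With $t_n=n\tstep$ and $1-t_n$ ranging over multiples of $\tstep$ down to $\tstep$, the sum $\sum (1-t_n)^{-3/2}$ behaves like $\tstep^{-1}\int_\tstep^1 s^{-3/2}\,ds=\order{\tstep^{-1}\cdot\tstep^{-1/2}}=\order{\tstep^{-3/2}}$, so the whole sum is $\order{(\abs{\log\tstep}/\tstep)^{3/2}}$ as claimed.

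The main obstacle I anticipate is being careful with the two-sided estimates on the transcendental inequality $\log\Gamma(2m+1)\ge M_1(m,\tstep,n+k)$: one must show both that $m_n$ is no larger than the stated bound (the upper bound proper, needing a clean lower bound on $\log((2m)!)$) and implicitly that the stated $m$ actually satisfies \cref{eq:MM} so that the ``smallest such $m$'' is well-defined and bounded. Extracting the sharp constants $\tfrac34$ and $\tfrac{e^2}{2}$ — rather than just ``some constant'' — requires solving $m\log m\ge C$ with enough precision, which is the only genuinely fiddly part; the monotonicity and the final summation are routine once the pointwise bound is in hand.
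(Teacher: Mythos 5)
Your proposal follows essentially the same route as the paper: monotonicity read off directly from the structure of $M_1$ in \cref{eq:M}, a Stirling lower bound of the form $\log\Gamma(2m+1)\ge 2m+m\log(4m^2/e^4)$ whose two summands are matched separately against the $\tfrac32\abs{\log\tstep}$ term and the remaining $m\log(\cdot)$ terms (the $m^2$ inside the logarithm is what produces the square root and the constant $e^2/2$ in the second branch), and the final sum controlled via $\sum_k k^{-3/2}<\infty$. One small slip in your sketch — ``exponentiating $\log m\gtrsim\abs{\log(1-t_n)}$ gives $m\gtrsim 1/(1-t_n)$'' — would overshoot the stated bound; the correct reading is $2\log m\gtrsim\abs{\log(1-t_n)}+\log\abs{\log\tstep}+O(1)$, yielding $m\gtrsim\sqrt{\abs{\log\tstep}/(1-t_n)}$, which is in fact the form your concluding summation with $(1-t_n)^{-3/2}$ relies on.
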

\begin{proof}
The function $M_1$ is increasing in $n$ (via $t_n\in(0,1)$) for $m\ge 2$. Hence, $m_n$ is  non-decreasing in $n$ ($m_n$ is discrete and may not change as $t_n$ is varied by small amounts).
Also, for fixed $t_n$, $M_1$ is a decreasing function of $\tstep$, and hence $m_n$ is non-decreasing as $\tstep$ decreases.
From \cref{eq:M},
\[
M_1(m,\tstep,n)\le \frac32 \,\abs{\log \tstep}%
+{m}\, \log \frac{16\,\abs{\log \tstep}} {\lambda\,(1-t_n)}.
\]
	Stirling's formula \cite[Eq. 6.1.37]{Abram} tells us that
	\[
	x!%
	=  \Gamma(x+1)%
	= \sqrt{2\pi} \,x^{x+1/2}\,\exp\pp{-x+\frac{\theta}{12x}},\qquad \text{for some $\theta\in(0,1)$,}
	\]
	and hence $\log\abs{(2m)!}=\log \Gamma(2m+1) \ge \frac 12\, \log(4\,\pi\, m)+ 2\,m\,(\log(2\,m)-1)$. Then,
	\[
	\log \Gamma(2\,m+1) %
	\ge 2\,m+ 2\,m\,\pp{\log(2\,m)-2}%
	=2\,m+m \log \frac{4\,m^2}{e^4}.
		\]
		If $m\ge (3/4)\abs{\log \tstep}$ and $m\ge (e^2/2) \sqrt{16\,\abs{\log \tstep}/(\lambda\,(1-t_n))}$, then $\log \Gamma(2\,m+1)\ge M_1(m,\tstep,n)$.  Hence, as $\sum_{k=1}^\infty k^{-3/2}$ is finite,
		\[
		\sum_{t_n<1} m_n^3%
		\le c \,\pp{\frac{16\,\abs{\log \tstep}}{\lambda\,\tstep}}^{3/2}.%\qedhere
		\]
\end{proof}

We now give the main convergence theorem for ordinary SDEs. In this case, the coefficients $a(x,y)$ and $b(x,y)$ are independent of the mean-field $y$. We  choose the single-point initial distribution $\mu=\delta_x$ and write $X(t)$ for $X^{\mu}(t)$ and $P^x_t$ for $P^{\delta_x}_t$.

\begin{assumption}\label[assumption]{ass}
	Suppose that $K\ge 1\ge \lambda>0$ and assume that
	$x_0\in[-K,K]$ and $a,b\in C^{4}_K(\real)$ and $b^2(x)\ge
	\lambda$ for all $x\in\real$.
\end{assumption}

\begin{theorem} \label{sodemain}Let  \cref{ass} hold. Consider
  the $m_n$-point Gauss quadrature rule $Q_n$
  defined in \cref{alg} with $m_n$ given by \cref{eq:MM} and $R=\sqrt{(4/\lambda)\abs{\log \tstep} }$.
 The total error satisfies
  \[
  \abs{P_1^x(\phi) -Q_N(\phi)}%
  \le \begin{cases}
   c\, \norm{\phi}_{2,\beta}\,%
  (1+\abs{x}^c)\,
    \,\tstep\,\abs{\log \tstep},  \qquad \forall \phi\in F^{2,\beta},
    \\[0.5em]
     c\, \norm{\phi}_{3,\beta}\,%
     (1+\abs{x}^c)\,
     \,\tstep,  \qquad\quad\;\qquad \forall \phi\in F^{3,\beta},
    \end{cases}
  \]
  for a constant $c$ independent of $K$.
\end{theorem}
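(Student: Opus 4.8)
The plan is to combine a one-step local error analysis with a stability (error-propagation) argument, in the classical style of weak convergence proofs for SDEs but now tracking the Gauss-quadrature reduction error alongside the Euler--Maruyama weak error. First I would set up the error decomposition: write the total error $P_1^x(\phi)-Q_N(\phi)$ as a telescoping sum over the time steps, inserting at each step the exact semigroup $u(t_n,\cdot) = P^{\cdot}_{1-t_n}(\phi)$ (the solution of the backward Kolmogorov equation with terminal data $\phi$). The key objects are then, at step $n$: (i) the weak one-step error of Euler--Maruyama applied to $u(t_{n+1},\cdot)$, which under \cref{ass} and standard weak-error estimates \cite{Kloeden2011-qd} is $\order{\tstep^2}$ per step, with constant controlled by bounds on derivatives of $u$ up to order $4$; and (ii) the Gauss-quadrature-plus-support-reduction error incurred in passing from $Q_{n+1}^{\pm}$ to $Q_{n+1}$, which is exactly what \cref{t} (equivalently \cref{corr} with \cref{supred}) estimates, evaluated at the test function $u(t_{n+1},\cdot)$.

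The crucial intermediate step is to obtain derivative bounds on the backward solution $u(t,\cdot)$: one needs that $u(t,\cdot)\in F^{2,\beta}$ or $F^{3,\beta}$ with norms controlled uniformly in $t\in[0,1]$ (for the weak error, derivatives up to order $4$ with at most polynomial growth), and, more delicately, bounds on the high-order derivatives $u^{(2m_n)}(t,\cdot)$ on $(-R,R)$ that appear in \cref{t}. For the latter I would invoke the smoothing property of the SDE flow — since $b^2\ge\lambda>0$ the equation is uniformly elliptic, so $u(t,\cdot)$ is smooth for $t<1$ with derivative bounds of the form $\norm{u^{(j)}(t,\cdot)}_{\infty,\text{loc}} \le c_j\,(1-t)^{-(j-\text{something})/2}\,\norm{\phi}$, i.e.\ blowing up like a negative power of $1-t$ as $t\to 1$. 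This is precisely why the definition of $m_n$ in \cref{eq:MM} contains the term $(m-2)\abs{\log(1-t_n)}$: it is engineered so that $(2R)^{2m_n-1}/(2m_n)!$ times the worst-case $(1-t_n)^{-(m_n-2)}$ derivative bound is dominated by $\tstep^{3/2}$, making the per-step quadrature error $\order{\tstep^{3/2}}$ and hence, after summing $N=\order{1/\tstep}$ steps, $\order{\tstep^{1/2}}$ — wait, that is too weak, so in fact the choice $M_1$ with the "$+1/2$" (the $p=1$ case $M_1$, giving exponent $3/2$ inside $\tstep^{p+1/2}$) must be read as yielding per-step error $\tstep^{3/2}$ against the specific test function $u(t_{n+1},\cdot)$ whose high derivative at $t_{n+1}$ carries the compensating factor; one tracks $\phi\in F^{2,\beta}$ versus $F^{3,\beta}$ to get the $\abs{\log\tstep}$ loss in the first case and its absence in the second. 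I would make this accounting precise by substituting $R=\sqrt{(4/\lambda)\abs{\log\tstep}}$ into \cref{t}, using Stirling as in the proof of \cref{blimey} to compare $\log\Gamma(2m_n+1)$ with $M_1(m_n,\tstep,n+k)$, and verifying that the inequality \cref{eq:MM} forces the bracketed expression in \cref{t} down to $\order{\tstep^{3/2}}$ (or $\order{\tstep^{3/2}\abs{\log\tstep}}$).

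For the stability part, I would use that the exact flow is a contraction (or at worst grows by $1+c\tstep$ per step) on the relevant weighted test-function norms, so that the accumulated error is the sum of per-step errors times a factor $e^{c}=\order{1}$ over $t_n\le 1$; this also needs \cref{mf_mom} to guarantee $Q_n(e^{\lambda x^2})\le c$ uniformly, which is what justifies applying \cref{supred} at every step and keeps the polynomial-growth constants $(1+\abs{x}^c)$ under control (the $\abs{x}^c$ dependence entering only through the initial data $\mu=\delta_x$ and the moment growth of $u(0,\cdot)=P^x_1(\phi)$). Summing: $N\cdot\order{\tstep^2}$ from Euler--Maruyama gives $\order{\tstep}$, and $N\cdot\order{\tstep^{3/2}}$-type quadrature contributions (the extra $\sqrt\tstep$ and $\abs{\log\tstep}$ bookkeeping handled by the $p=1$ vs the regularity of $\phi$) give $\order{\tstep}$ or $\order{\tstep\abs{\log\tstep}}$, plus the support-reduction $N\cdot\order{\tstep^2}=\order{\tstep}$ term, yielding the stated bound. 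The main obstacle I anticipate is the derivative bound on $u(t,\cdot)$ near $t=1$: getting the sharp power of $(1-t)^{-1}$ per derivative, with constants independent of $K$ in the right way, and then checking that the combinatorial trade-off encoded in \cref{eq:MM} genuinely closes the estimate at first order rather than losing a further $\abs{\log\tstep}$ power — this is where I would spend most of the effort, likely leaning on parabolic smoothing estimates for the uniformly elliptic backward Kolmogorov equation together with the moment bounds from \cref{mf_mom}.
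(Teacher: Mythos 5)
Your proposal is correct and follows essentially the same route as the paper: a telescoping decomposition against the backward semigroup $g_n(x)=P^x_{1-t_n}(\phi)$, a per-step split into the Euler--Maruyama weak local error and the quadrature-plus-support-reduction error from \cref{t}, parabolic-smoothing derivative bounds of the form $\norm{g_n}_{k,\beta}\le \norm{\phi}_{2,\beta}(1-t_n)^{-(k-2)/2}$ whose singularity is absorbed by the $(m-2)\abs{\log(1-t_n)}$ term in \cref{eq:MM} via Stirling, and the final summation $\sum_n \tstep/(1-t_n)\le\abs{\log\tstep}$ producing the logarithm in the $F^{2,\beta}$ case and its absence for $F^{3,\beta}$. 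Your momentary wobble over whether the per-step quadrature error is $\tstep^{3/2}$ or $\tstep^2$ resolves exactly as in the paper (the $\tstep^{1/2}$ clustering prefactor from \cref{bounds} times the $\tstep^{3/2}$ forced by the Gamma-function threshold gives $\tstep^2/(1-t_n)$ per step), so no substantive gap remains.
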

\begin{proof} Let $g_n(x)\coloneq P^{x}_{1-t_n} (\phi)\equiv \mean{\phi(X(1-t_n))\,|\, X(0)=x}$ for $x\in\real$. Notice that $g_N=\phi$ and $g_0(x)=P^x_1(\phi)$. Let $T_\tstep(\phi)(x)=\mean{\phi(\Psi(x,\tstep,\cdot))}$ for $\Psi$ defined in \cref{psi}.
 The total error
\[
P^x_1(\phi)
-Q_N(\phi)%
=\sum_{n=1}^N E_n^T %
+\sum_{n=1}^N E_n^G,%
\]
where $E_n^G=Q_n^\pm (g_n)-Q_n (g_n)$ (the error
due to \cref{alg})  and $E_n^T= Q_{n-1}(g_{n-1})-Q_n^\pm (g_n)= Q_{n-1} (P_\tstep^x (g_n))-Q_{n-1}(T_\tstep(g_n))$ (the bias error due to Euler--Maruyama over time step $\tstep$). We estimate the two sources of error, focusing on the case where $\phi \in F^{2,\beta}$.
%\begin{enumerate}

	Local truncation error: Under \cref{ass}, \cite[Eq. (35) with  $\gamma=1$]{Muller-Gronbach2015-vv} shows that
	$E_n^T$ satisfies
	\[
	\abs{E_n^T}%
	\le \begin{cases}\displaystyle
	c\, \norm{\phi}_{4,\beta}\, \pp{1+\abs{x}^c}\,
	\frac{\tstep^2}{1-t_n},& n=1,\cdots,N-1,\\[0.7em]
		c\, \norm{\phi}_{2,\beta}\, \pp{1+\abs{x}^c}\,
		\tstep,&n=N.
	\end{cases}
	\]

	\cref{alg} error: We do not apply \cref{alg} on the final step and so $E_N^G=0$. For $n=1,\dots,N-1,$
	 \cref{t} gives that
	\begin{align*}
	\abs{E_n^G}%
	=\abs{ Q_{n}^\pm (g_n)-Q_n (g_n) }%
	&\le c\, \norm{g^{(2 m_{n-k} ) }_{n}}_\infty %
	\frac{1}{(2	m_{n-k})!}\,  %
	\tstep^{1/2}
	\abs{\frac {16}\lambda \log \tstep}^{\frac{2 m_{n-k}-1}2}\\%
	&\qquad+c\,\norm{\phi}_{0,\beta}\,\tstep^2.%
	\end{align*}
	\cite[Lemma 8]{Muller-Gronbach2015-vv} provides that
	\[
	\norm{g_n}_{k,\beta} %
	\le \norm{\phi}_{2,\beta}
	\frac{1}{(1-t_n)^{(k-2)/2}},\qquad \forall k\ge 4.
	\]
	Consequently,
	\[
	\abs{E_n^G}%
	\le c\,%
	\norm{\phi}_{2,\beta}\,%
	\frac{1}{(1-t_n)^{ m_{n-k} -1  } }\,%
	\frac{1}{(2m_{n-k})!}\,
	\tstep^{1/2}\,%
	\abs{\frac {16}\lambda\log \tstep}^{ \frac{2 m_{n-k}-1}2} %
	+c\,\norm{\phi}_{0,\beta}\,\tstep^2.
	\]
	Notice that
	\[
	\frac{1}{(1-t_n)^{ m_{n-k}-1 } }%
	\frac{1}{(2m_{n-k})!}
	\tstep^{1/2 }%
	\abs{\frac {16} \lambda \log \tstep}^{ \frac{2 m_{n-k}-1}2}
	\le\frac1{(1-t_n)}
	\tstep^2,%
	\]
	if
	\begin{align*}
	{\Gamma(2m_{n-k} +1)}%
	&\ge
	\tstep^{-3/2}\,%
	{	\abs{\frac {16} \lambda \log \tstep}^{ \frac{2 m_{n-k}-1}2}} %
	\frac{1}{(1-t_n)^{m_{n-k}-2 } }.
	\end{align*}
	This holds  as we have chosen $m_{n-k}$ satisfies
	$
	\log \Gamma(2m_{n-k}+1) %
	\ge M_1(m_{n-k},\tstep, n),
	$
	for $M_1$ defined in \cref{eq:M}.
	Then, $\abs{E^G_n} \le c\,(\norm{\phi}_{2,\beta}+1)\,\tstep^2/ (1-t_n)$.
%\end{enumerate}

Summing all the errors and using $\sum_{n=1}^{N-1} \tstep/(1-t_n) \le \log (N)=\abs{\log \tstep}$, we complete the proof. For $\phi\in F^{3,\beta}$, the argument is similar except the $(1-t_n)$ factors do not arise and so the $\abs{\log \tstep}$ term does not appear.
 \end{proof}

\section{Error analysis for mean-field SDEs}\label[section]{err2}

We now generalise our error analysis to mean-field SDEs. We wish to show that $Q_n$ approximates $P_{t_n}^\mu$, starting from a good approximation of the initial distribution, $Q_0\approx\mu$. To express the closeness of $Q_0$ to $\mu$, we use the Wasserstein distance. For any probability measures $\mu,\nu$ on $\real$, define  the Wasserstein distance
\[
W_{k,\beta}(\mu,\nu)%
\coloneq \sup\Bp{\abs{\mu(\phi)-\nu(\phi)}%
	\colon \norm{\phi}_{k,\beta}\le 1}.%
\]

\begin{assumption}\label[assumption]{ass:initial}
	The initial measure $Q_0$  satisfies $Q_0(e^{\alpha\,x^2})<\infty$ for some $\alpha>0$ independent of $\tstep$ and approximates $\mu$ in the sense that $W_{2,\beta}(\mu,Q_0)\le c\,\tstep$.
\end{assumption}

Under this assumption, \cref{mf_mom} applies and $Q_n(e^{\lambda \,x^2})$, for $t_n\le 1$, is uniformly bounded for some $\lambda>0$.  We choose $R=\sqrt{(4\,/\lambda)\,\abs{\log \tstep}}$ in \cref{alg}.

We introduce a non-autonomous SDE corresponding to the mean-field SDE with $P^\mu_t(a(X,\cdot))$ and  $P^\mu_t(b(X,\cdot))$ treated as known functions of $(X,t)$.
Let $X(t;s,x)$ for $t\ge s$ denote the
solution of
\begin{equation}
dX%
=\bar{a}(X,t)\,dt%
+ \bar{b}(X,t) \,dW(t),\qquad
X(s;s,x)=x,\label{eq:mf_sde2}
\end{equation}
for $\bar{a}(X,t)\coloneq P_t^\mu(a(X,\cdot))$ and $\bar{b}(X,t)\coloneq P_t^\mu(b(X,\cdot))$.
Here we fix the initial distribution as a delta measure at
$x$ and keep the same measure $P_t^\mu$ from \cref{eq:mf_sde} for the mean fields.
Note that $\int_{\real} \mean{ \phi( X(t;0,x) ) }\,\mu(dx)=P_t^\mu(\phi)$, so that $P^\mu_t(\phi)=\mu (P_{0,t} (\phi) )$ for $P_{s,t}(\phi)(x)\coloneq \mean{ \phi( X(t;s,x) ) }$. In this notation,  we drop the
$\mu$ superscript, even though the non-autonomous SDE depends on $\mu$ via the drift and diffusion.

In the following assumption on the drift and diffusion, the mean-field diffusion $\bar b$ is used to set a non-degeneracy condition.

\begin{assumption}\label[assumption]{ass2}
	Suppose that $a,b\in C^4_K(\real^2)$ and, for some $K\ge 1\ge \lambda>0$, that $\bar{b}^2(t,x)\ge
	\lambda$  for $x\in\real$ and $t\in[0,1]$.
\end{assumption}
The main theorem for the numerical approximation of mean-field SDEs by GQ1 is the following. The method of selecting the number of Gauss points $m_n$  is modified to approximate the distribution uniformly on the time interval. In this case, $m_n\equiv m$ should be chosen independent of $n$. We choose $m$ as the smallest integer greater than the initial number of points $m_0$ such that  $
\log \Gamma(2m+1)\ge M_1(m, \tstep, n+k)$ where $M_1$ is given by
\begin{equation}\label{eq:MMM}
M^{\text{mf}}_p(m,\tstep, n)%
\coloneq \pp{p+m-\frac32}\,\abs{\log \tstep}%
+\frac {2m-1}2\, \log \abs{\frac {16} \lambda\log \tstep}
\end{equation}
or
\begin{equation}\label{eq:MMM1}
M^{\text{smooth}}_p(m,\tstep, n)%
\coloneq \pp{p+\frac 12}\,\abs{\log \tstep}%
+\frac {2m-1}2\, \log \abs{\frac {16} \lambda\log \tstep}.
\end{equation}
The choice of $M_1$ depends on the regularity of the underlying problem, as described in \cref{thm:main_mf}. The time $t_n$ appears on the right-hand side in neither case and $m$ is independent of $n$. In the following, the overall work for the time-stepping is dominated by $\sum_{t_n\le 1} m_n^3$ (the work to compute the Gauss quadrature rule at each step). The work to compute the initial measure $Q_0$ is often neglible, for example, if the initial distribution is Gaussian or in other cases where accurate quadrature rules are easily computed.

\begin{theorem}\label{blimey2}
	Denote the initial number of points for the rule $Q_0$ by $m_0$. For \cref{eq:MMM},
	\[
	m%
	\le%
	\max\Bp{m_0,1+\frac{e^2}{2} \,\sqrt{\frac{8\,\abs{\log{\tstep}} }{\lambda\, \tstep} }}.
	\]
	If the work to compute $Q_0$ is $\order{\smash{\abs{\log \tstep}^{3/2}/\tstep^{5/2}}}$ and the initial number of points $m_0=\order{\smash{\abs{\log \tstep}^{1/2}/\tstep^{1/2}}}$, then the overall total work $\order{\smash{\abs{\log \tstep}^{3/2}/\tstep^{5/2}}}$.
	For \cref{eq:MMM1},  	\[
	m%
	\le\max\Bp{m_0,1+\frac34 \abs{\log \tstep},1+\frac{e^2}{2} \,\sqrt{\frac{8\,\abs{\log{\tstep}} }{\lambda} }}.
	\]
		If the work to compute $Q_0$ is $\order{\abs{\log \tstep}^3/\tstep}$ and the initial number of points $m_0=\order{\abs{\log \tstep}}$, then the overall total work $\order{\abs{\log \tstep}^3/\tstep}$.
\end{theorem}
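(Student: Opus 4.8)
The plan is to mirror the proof of \cref{blimey}: first bound the (now constant) number of Gauss points $m$ by comparing the defining inequality $\log\Gamma(2m+1)\ge M_1(m,\tstep,\cdot)$ with the lower bound on $\log\Gamma$ supplied by Stirling's formula, and then sum the per-step cost over the $\order{1/\tstep}$ time steps and add the cost of constructing $Q_0$.

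First I would bound $m$. As in the proof of \cref{blimey}, Stirling's formula gives $\log\Gamma(2m+1)\ge 2m+m\log(4m^2/e^4)=m\log(4m^2/e^2)$. For the choice \cref{eq:MMM}, using $\abs{\log\tstep}=\log(1/\tstep)$ and $\log\abs{\frac{16}{\lambda}\log\tstep}=\log\frac{16\abs{\log\tstep}}{\lambda}$, the two terms combine into $M^{\text{mf}}_1(m,\tstep,n)=\frac{2m-1}{2}\abs{\log\tstep}+\frac{2m-1}{2}\log\frac{16\abs{\log\tstep}}{\lambda}=\frac{2m-1}{2}\log\frac{16\abs{\log\tstep}}{\lambda\tstep}$. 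Since $\frac{2m-1}{2}\le m$, the defining inequality holds once $4m^2/e^2\ge 16\abs{\log\tstep}/(\lambda\tstep)$, i.e. once $m\ge 2e\sqrt{\abs{\log\tstep}/(\lambda\tstep)}$, and since $2e\le\frac{e^2}{2}\sqrt8$ this is in particular implied by $m\ge\frac{e^2}{2}\sqrt{8\abs{\log\tstep}/(\lambda\tstep)}$; together with the requirement $m\ge m_0$ this gives the first stated bound and shows $m=\order{\sqrt{\abs{\log\tstep}/\tstep}}$. For the choice \cref{eq:MMM1} the $\abs{\log\tstep}$ term carries the fixed coefficient $\frac32$, so it is absorbed by the linear part $2m$ of the Stirling bound once $m\ge\frac34\abs{\log\tstep}$, while the remaining term $\frac{2m-1}{2}\log\frac{16\abs{\log\tstep}}{\lambda}\le m\log\frac{16\abs{\log\tstep}}{\lambda}$ is absorbed by $m\log(4m^2/e^4)$ once $4m^2/e^4\ge 16\abs{\log\tstep}/\lambda$; combining these two conditions with $m\ge m_0$ yields the second stated bound, so $m=\order{\abs{\log\tstep}}$.

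Then I would estimate the work. There are $N=\order{1/\tstep}$ time steps to reach $t=1$, and at each step at which \cref{alg} is invoked the rule $Q_{n+k}^\pm$ has at most $2^km$ points with $k$ fixed, so the Golub--Welsch computation costs $\order{(2^km)^2+m^3}=\order{m^3}$ while the evaluations of $Q_n(a(x^i_n,\cdot))$ and $Q_n(b(x^i_n,\cdot))$ cost $\order{m^2}$ per step; hence the time-stepping work is $\sum_{t_n\le1}m_n^3=\order{m^3/\tstep}$, as already noted before the theorem statement. Substituting the bounds on $m$ from the previous step gives $\order{\abs{\log\tstep}^{3/2}/\tstep^{5/2}}$ for \cref{eq:MMM} and $\order{\abs{\log\tstep}^3/\tstep}$ for \cref{eq:MMM1}. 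Adding the assumed cost of forming $Q_0$ --- of order $\abs{\log\tstep}^{3/2}/\tstep^{5/2}$, resp.\ $\abs{\log\tstep}^3/\tstep$ --- and observing that $m_0^3$, of order $\abs{\log\tstep}^{3/2}/\tstep^{3/2}$, resp.\ $\abs{\log\tstep}^3$, is of strictly lower order than the time-stepping cost, we recover the claimed total work in each case.

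The hard part will be the explicit constant-tracking in the Stirling comparison: in the smooth case one has to separate the regime in which $\frac34\abs{\log\tstep}$ dominates the bound from the one in which the $\sqrt{\cdot}$ term does, and in both cases be careful with the $\frac{2m-1}{2}$-versus-$m$ discrepancy and with the fact that $\log\abs{\frac{16}{\lambda}\log\tstep}$ is only eventually positive as $\tstep\downarrow0$. Everything else --- the $N=\order{1/\tstep}$ count, the $\order{m^3}$ per-step cost, and the remark that neither $m_0^3$ nor the $Q_0$ cost changes the order --- is routine and parallels the proof of \cref{blimey}.
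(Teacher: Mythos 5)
Your argument is correct and is essentially the paper's own proof: the same Stirling lower bound $\log\Gamma(2m+1)\ge 2m+m\log(4m^2/e^4)$ is compared against $M_1^{\mathrm{mf}}\le m\log\bigl(16\abs{\log\tstep}/(\lambda\tstep)\bigr)$ and $M_1^{\mathrm{smooth}}\le \tfrac32\abs{\log\tstep}+m\log\bigl(16\abs{\log\tstep}/\lambda\bigr)$, and the total work is counted as $\sum_{n=1}^N m^3=m^3/\tstep$ plus the cost of forming $Q_0$, exactly as in the paper. The one caveat — which you share with the paper's own proof, whose sufficient condition is verified with $\sqrt{16\,\cdot}$ rather than the stated $\sqrt{8\,\cdot}$ — is that in the smooth case, once the $2m$ term of the Stirling bound has been spent absorbing $\tfrac32\abs{\log\tstep}$, the explicit constant $\tfrac{e^2}{2}\sqrt{8\abs{\log\tstep}/\lambda}$ does not quite follow (one gets $\tfrac{e^2}{2}\sqrt{16\abs{\log\tstep}/\lambda}$); this discrepancy is purely at the level of constants and affects none of the asymptotic work estimates.
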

\begin{proof}
	From \cref{eq:MMM},
	\[
	M_1^{\mathrm{mf}}(m,\tstep,n)\le {m}\, \log \frac{16\,\abs{\log \tstep}} {\lambda\,\tstep}
	\]
and
	\[
	\log \Gamma(2\,m+1) %
	\ge 2\,m+m \log \frac{4\,m^2}{e^4}.
	\]
	If $m\ge (e^2/2) \sqrt{16\,\abs{\log \tstep}/(\lambda\,\tstep)}$, then we have $\log \Gamma(2\,m+1)\ge M_1^{\mathrm{mf}}(m,\tstep,n)$. Similarly, from \cref{eq:MMM1},
		\[
		M_1^{\mathrm{smooth}}(m,\tstep,n)\le \frac32 \,\abs{\log \tstep}%
		+{m}\, \log \frac{16\,\abs{\log \tstep}} {\lambda}.
		\]	If $m\ge (3/4)\abs{\log \tstep}$ and $m\ge (e^2/2) \sqrt{16\,\abs{\log \tstep}/\lambda}$, then we see $\log \Gamma(2\,m+1)\ge M_1^{\mathrm{smooth}}(m,\tstep,n)$.
				 The estimate for the total work follows as $\sum_{n=1}^N m^3=m^3/\tstep$.
\end{proof}
In the following, we show upper bounds on the error for smooth and rough problems, and smooth in this case indicates infinite differentiability, which is much stronger than in \cref{sodemain}. This is because infinite differentiability  allows the reduction of the number of Gauss points $m$ to $\order{\abs{\log \tstep}^{1/2}}$ from $(\abs{\log \tstep}/\tstep)^{1/2}$.
\begin{theorem}\label{thm:main_mf}
	Let \cref{ass1,ass:initial,ass2} hold and the number of Gauss points $m$ be given by \cref{eq:MMM}.
	For some $c>0$
	\[
	\max_{t_N\le 1}
\abs{ P^\mu_{t_N}(\phi)%
		-Q_N(\phi)}
	\le c\,%
	\norm{\phi}_{2,\beta}\,%
	\tstep\,\abs{\log \tstep}%
	,\qquad %
	\forall \phi\in F^{2,\beta}.
	\]
	If in addition to \cref{ass1}, we have $W_{\infty,\beta}(\mu,Q_0)\le c\,\tstep$ and in addition to \cref{ass2}, we have $a,b\in C^\infty_K(\real^2)$, and the number of Gauss points $m$ is given by \cref{eq:MMM1}, then
	\[
		\max_{t_N\le 1}
		\abs{ P^\mu_{t_N}(\phi)%
			-Q_N(\phi)}
		\le c\,%
		\norm{\phi}_{\infty,\beta}\,%
		\tstep%
		,\qquad %
		\forall \phi\in F^{\infty,\beta}.
		\]
\end{theorem}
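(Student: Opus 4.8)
The plan is to transplant the telescoping argument from the proof of \cref{sodemain}, replacing the autonomous flow by the non-autonomous frozen-measure flow of \cref{eq:mf_sde2} and inserting one extra term that controls the error of using moments of $Q_n$ in place of moments of $P^\mu_{t_n}$ inside each time step. Fix $t_N\le1$ and $\phi$, and set $g_n\coloneq P_{t_n,t_N}(\phi)$, so that $g_N=\phi$ and $P^\mu_{t_N}(\phi)=\mu(P_{0,t_N}(\phi))=\mu(g_0)$. Write $T^Q_\tstep h(x)=\mean{h(x+Q(a(x,\cdot))\,\tstep+Q(b(x,\cdot))\,\tstep^{1/2}\,\xi)}$, so that $Q^\pm_n(g_n)=Q_{n-1}(T^{Q_{n-1}}_\tstep g_n)$, and let $\widetilde T_\tstep$ be the analogous one-step Euler operator for \cref{eq:mf_sde2} with coefficients $\bar a(\cdot,t_{n-1}),\bar b(\cdot,t_{n-1})$ frozen. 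As in \cref{sodemain} no reduction is applied at the reporting step, so $E^G_N=0$ and every $g_n$ occurring with a reduction has $t_n<t_N$, hence is smooth by the ellipticity $\bar b^2\ge\lambda$. Then
\[
P^\mu_{t_N}(\phi)-Q_N(\phi)=\bigl(\mu(g_0)-Q_0(g_0)\bigr)+\sum_{n=1}^N\bigl(E^T_n+E^G_n\bigr),
\]
with $E^G_n=Q^\pm_n(g_n)-Q_n(g_n)$ and $E^T_n=Q_{n-1}(g_{n-1}-\widetilde T_\tstep g_n)+Q_{n-1}(\widetilde T_\tstep g_n-T^{Q_{n-1}}_\tstep g_n)$.

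Three of these four pieces are treated essentially as in \cref{sodemain}, using the regularity and local-error estimates of \cite[Lemma 8 and Eq.~(35)]{Muller-Gronbach2015-vv}, which apply to the non-autonomous equation \cref{eq:mf_sde2} because $\bar a,\bar b$ and their $x$-derivatives are bounded uniformly in $t\in[0,1]$ and $\bar b^2\ge\lambda$. First, $\abs{\mu(g_0)-Q_0(g_0)}\le W_{2,\beta}(\mu,Q_0)\,\norm{g_0}_{2,\beta}\le c\,\tstep\,\norm{\phi}_{2,\beta}$ by \cref{ass:initial}. Second, $\abs{(g_{n-1}-\widetilde T_\tstep g_n)(x)}\le c\,\norm{\phi}_{2,\beta}(1+\abs{x}^c)\,\tstep^2/(t_N-t_n)$ for $n<N$ and $\le c\,\norm{\phi}_{2,\beta}(1+\abs{x}^c)\,\tstep$ for $n=N$; integrating against $Q_{n-1}$, using \cref{mf_mom} and $\sum_{n<N}\tstep/(t_N-t_n)\le\abs{\log\tstep}$, contributes $c\,\norm{\phi}_{2,\beta}\,\tstep\,\abs{\log\tstep}$. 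Third, $E^G_n$ is bounded by \cref{t} with $m_n\equiv m$; since $\norm{g_n}_{2m,\beta}\le c\,\norm{\phi}_{2,\beta}(t_N-t_n)^{-(m-1)}$ and $t_N-t_n\ge\tstep$, the choice \cref{eq:MMM} is calibrated precisely so that $\Gamma(2m+1)$ is large enough to give $\abs{E^G_n}\le c\,\norm{\phi}_{2,\beta}\,\tstep^2/(t_N-t_n)$ (a marginally larger $p$ in \cref{eq:M} absorbs the polylogarithmic factor $1+R^\beta$ from the growth of $g_n^{(2m)}$ on $(-R,R)$), so summing again gives $c\,\norm{\phi}_{2,\beta}\,\tstep\,\abs{\log\tstep}$.

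The genuinely new ingredient is the mean-field perturbation $Q_{n-1}(\widetilde T_\tstep g_n-T^{Q_{n-1}}_\tstep g_n)$. The Euler points built from $Q_{n-1}$ differ from those built from $P^\mu_{t_{n-1}}$ by $\delta a_i\,\tstep\pm\delta b_i\,\tstep^{1/2}$, where $\delta a_i=(Q_{n-1}-P^\mu_{t_{n-1}})(a(x^i,\cdot))$ and $\delta b_i=(Q_{n-1}-P^\mu_{t_{n-1}})(b(x^i,\cdot))$, and $\abs{\delta a_i},\abs{\delta b_i}\le K\,W_{2,\beta}(Q_{n-1},P^\mu_{t_{n-1}})$ because $\norm{a(x,\cdot)}_{2,\beta},\norm{b(x,\cdot)}_{2,\beta}\le K$. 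The key observation is that the $\pm$ symmetrisation of the two-point increment kills the leading $\order{\tstep^{1/2}}$ part of the diffusion perturbation: in the Taylor expansion of $g_n$ about the frozen points, the only term of order $\tstep^{1/2}$ is $\tfrac12\bigl(g_n'(\widetilde X^{i+})-g_n'(\widetilde X^{i-})\bigr)\delta b_i\,\tstep^{1/2}$, and $\widetilde X^{i+}-\widetilde X^{i-}$ is itself of order $\tstep^{1/2}$, so every surviving term is $\order{\tstep}$. Using $\norm{g_n}_{2,\beta}\le c\,\norm{\phi}_{2,\beta}$, the moment bound of \cref{mf_mom} for $Q_{n-1}$, and the a priori bound $W_{2,\beta}(Q_{n-1},P^\mu_{t_{n-1}})\le c$ (from boundedness of the moments of both measures, to dominate the quadratic remainder), this yields $\abs{Q_{n-1}(\widetilde T_\tstep g_n-T^{Q_{n-1}}_\tstep g_n)}\le c\,\norm{\phi}_{2,\beta}\,\tstep\,W_{2,\beta}(Q_{n-1},P^\mu_{t_{n-1}})$. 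Collecting the four bounds and taking the supremum over $\norm{\phi}_{2,\beta}\le1$ gives, with $u_n\coloneq W_{2,\beta}(Q_n,P^\mu_{t_n})$,
\[
u_N\le c\,\tstep\,\abs{\log\tstep}+c\,\tstep\sum_{n=0}^{N-1}u_n,\qquad u_0\le c\,\tstep,
\]
so the discrete Gronwall inequality gives $u_N\le c\,\tstep\,\abs{\log\tstep}\,e^{c\,t_N}\le c\,\tstep\,\abs{\log\tstep}$ uniformly in $t_N\le1$, which is the first claim. For the second claim one repeats the argument with $W_{\infty,\beta}$ in place of $W_{2,\beta}$ (legitimate since $a(x,\cdot),b(x,\cdot)\in C^\infty_K$ and $W_{\infty,\beta}(\mu,Q_0)\le c\,\tstep$): as $\phi\in F^{\infty,\beta}$ controls all derivatives, no smoothing is needed, $\norm{g_n}_{k,\beta}\le c\,\norm{\phi}_{\infty,\beta}$ for every $k$, the frozen-measure one-step error is $\order{\tstep^2}$ at every step including $n=N$, and \cref{eq:MMM1} makes $\abs{E^G_n}\le c\,\norm{\phi}_{\infty,\beta}\,\tstep^2$ with no $(t_N-t_n)^{-1}$ factor; the Gronwall inequality becomes $u_N\le c\,\tstep+c\,\tstep\sum_{n<N}u_n$ and yields $u_N\le c\,\tstep$.

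I expect the main obstacle to be establishing the two non-autonomous estimates used above --- the smoothing bound $\norm{P_{s,t}(\phi)}_{k,\beta}\le c\,\norm{\phi}_{2,\beta}(t-s)^{-(k-2)_+/2}$ and the one-step Euler weak-error bound for \cref{eq:mf_sde2}, with constants uniform in $s<t\in[0,1]$. These should follow from \cite{Muller-Gronbach2015-vv} once one verifies its hypotheses for the time-dependent coefficients $\bar a,\bar b$, which needs control of the regularity of $t\mapsto P^\mu_t$; this is exactly where the smooth mean-field density guaranteed by \cref{ass1} (via \cite{Antonelli2002-er}) is used. A secondary technical nuisance is the bookkeeping of the polylogarithmic factor $1+R^\beta$ in the Gauss-quadrature error when $m$ is large, which is what forces the mildly enlarged number of points in \cref{eq:MMM}.
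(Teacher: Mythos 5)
Your proposal is correct and follows essentially the same route as the paper: the same telescoping decomposition over the frozen-measure non-autonomous flow \cref{eq:mf_sde2} into a local truncation error, a mean-field perturbation error, and a Gauss-quadrature error, with the same regularity/local-error inputs from \cite{Muller-Gronbach2015-vv} and the same closure by a discrete Gronwall inequality in the Wasserstein distance. The only (harmless) variation is in the mean-field term: the paper's \cref{lemma:b} gets proportionality to $W_{2,\beta}(P^\mu_{t_{n-1}},Q_{n-1})$ by using three derivatives of $g_{n,N}$ together with the smoothing bound $\norm{g_{n,N}}_{3,\beta}\le c\,\norm{\phi}_{2,\beta}(t_N-t_n)^{-1/2}$, whereas you achieve it with only two derivatives by absorbing the quadratic remainder through the a priori bound $W_{2,\beta}\le c$; both yield a summable Gronwall kernel.
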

Before the proof, we develop a sequence of lemmas. First, we show that the Euler--Maruyama step depends continuously on the initial measure $\mu$ in terms of the Wasserstein distance.
\begin{lemma}\label[lemma]{lemma:b}
	Suppose that $a,b\in C^k_K(\real^{2})$.
	There exists $c>0$ such that, for any $x\in\real$,
	\begin{equation}\label[ineq]{imp}
	\abs{\delta(x)} %
	\le%
	\begin{cases}
	 c\,\tstep\, %
	\norm{g}_{3,\beta}\, %
	(1+\abs{x}^\beta)\,%
	W_{k,\beta}(\mu,\nu),& \quad\forall g\in F^{3,\beta},\\[1em]
		 c\,\tstep\, %
		 \norm{g}_{2,\beta}\, %
		 (1+\abs{x}^\beta)\,%
		 (W_{k,\beta}(\mu,\nu)+1),& \quad\forall g\in F^{2,\beta},
		  \end{cases}
	\end{equation}
	where
	$
	\delta(x)%
	\coloneq\mean{g(\Psi(x, \tstep,\mu)) }%
	-\mean{g(\Psi(x,\tstep,\nu)}$ and $\Psi$ is defined by \cref{psi}.
\end{lemma}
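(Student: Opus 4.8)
The plan is to write the difference $\delta(x)$ in terms of the difference of two Euler--Maruyama maps and then Taylor-expand $g$ around a common base point. Write $\Psi(x,\tstep,\mu) = x + \tstep\,\mu(a(x,\cdot)) + \sqrt{\tstep}\,\mu(b(x,\cdot))\,\xi$ and similarly with $\nu$. Set $A_\mu = \mu(a(x,\cdot))$, $A_\nu = \nu(a(x,\cdot))$, $B_\mu = \mu(b(x,\cdot))$, $B_\nu = \nu(b(x,\cdot))$, so that $\Psi(x,\tstep,\mu) - \Psi(x,\tstep,\nu) = \tstep\,(A_\mu - A_\nu) + \sqrt{\tstep}\,(B_\mu - B_\nu)\,\xi$. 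Since $a(x,\cdot), b(x,\cdot) \in C^k_K(\real)$ uniformly in $x$ (they lie in a bounded set of $F^{k,\beta}$ for every $\beta$), the definition of the Wasserstein distance gives $\abs{A_\mu - A_\nu} \le c\,W_{k,\beta}(\mu,\nu)$ and $\abs{B_\mu - B_\nu}\le c\,W_{k,\beta}(\mu,\nu)$, with $c$ depending only on $K$ (and on $\beta$). Also $\abs{A_\mu},\abs{A_\nu},\abs{B_\mu},\abs{B_\nu}\le K$ by boundedness.

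Next I would condition on $\xi$ and Taylor-expand. Fix a realisation of $\xi$, write $u = \Psi(x,\tstep,\nu)$ and $h = \Psi(x,\tstep,\mu) - \Psi(x,\tstep,\nu)$, so $g(u+h) - g(u) = g'(u)\,h + \tfrac12 g''(u+\theta h)\,h^2$ for some $\theta\in(0,1)$. The leading term contributes $g'(u)\,\bigl(\tstep(A_\mu-A_\nu) + \sqrt{\tstep}(B_\mu - B_\nu)\xi\bigr)$; after taking expectation over $\xi$ the $\sqrt{\tstep}$-term does \emph{not} vanish in general because $u$ itself depends on $\xi$, but one extracts the cancellation by a further expansion of $g'(u)$ around the $\xi$-independent point $\tilde u \coloneq x + \tstep\,A_\nu$: since $\mean{\xi}=0$, the product $g'(\tilde u)\sqrt{\tstep}(B_\mu-B_\nu)\mean{\xi}=0$, and the correction $g'(u)-g'(\tilde u) = \mathcal O(\sqrt{\tstep})$ (using $\norm{g''}_\infty$-type bounds and $\abs{u - \tilde u}\le K\sqrt{\tstep}$) turns the $\sqrt{\tstep}$ into $\tstep$. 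The remaining remainder term $g''(u+\theta h)\,h^2$ is already $\mathcal O(\tstep)$ in size times $W_{k,\beta}(\mu,\nu)^2$, hence bounded by $c\,\tstep\,W_{k,\beta}(\mu,\nu)$ when $W_{k,\beta}\le 1$ and absorbed into the ``$+1$'' in the $F^{2,\beta}$ case otherwise. Throughout, the derivatives of $g$ are evaluated at points of the form $x + \mathcal O(\sqrt{\tstep})$, so $\abs{g^{(j)}(\cdot)} \le \norm{g}_{j,\beta}\,(1 + \abs{x}^\beta + \mathcal O(\tstep^{\beta/2})) \le c\,\norm{g}_{j,\beta}\,(1+\abs{x}^\beta)$, which produces the stated growth factor.

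For the two cases: with $g\in F^{3,\beta}$ one has enough smoothness to push the expansion of $g'(u)$ one further order and see that every surviving term carries a full factor $W_{k,\beta}(\mu,\nu)$ (no stray ``$+1$''), giving the first bound; with only $g\in F^{2,\beta}$ the second-order remainder $g''(u+\theta h)h^2$ must be bounded crudely by $c\,\tstep\,\norm{g}_{2,\beta}(1+\abs{x}^\beta)\bigl(W_{k,\beta}(\mu,\nu)^2 + W_{k,\beta}(\mu,\nu)\bigr)$, and using $s^2 + s \le (s+1)$-type manipulations (more precisely $s^2 \le s(s+1)$ combined with the other $\mathcal O(\tstep s)$ terms) one collapses this to $c\,\tstep\,\norm{g}_{2,\beta}(1+\abs{x}^\beta)(W_{k,\beta}(\mu,\nu)+1)$, giving the second bound. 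The main obstacle is the bookkeeping around the $\sqrt{\tstep}$-term: one must be careful that the martingale-type cancellation $\mean{\xi}=0$ is applied at a genuinely $\xi$-independent point, and track that the resulting correction is $\mathcal O(\tstep)$ and not merely $\mathcal O(\sqrt{\tstep})$ — everything else is a routine Taylor estimate using the uniform bounds on $a,b$ and the definition of $W_{k,\beta}$.
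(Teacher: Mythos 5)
Your argument is correct and shares the paper's two key ingredients: a second-order Taylor expansion combined with the cancellation $\mean{\xi}=0$ applied at a $\xi$-independent point, and the bound $\abs{(\mu-\nu)(a(x,\cdot))}\le c\,W_{k,\beta}(\mu,\nu)$ coming from $a(x,\cdot)$, $b(x,\cdot)$ lying in a bounded set of $F^{k,\beta}$. The decomposition differs, however, with a real consequence. The paper sets $x_{\lambda,\mu}=x+\lambda\,\mu(a(x,\cdot))\,\tstep+\lambda\,\mu(b(x,\cdot))\,\sqrt{\tstep}\,\xi$ and Taylor-expands $\lambda\mapsto g(x_{\lambda,\mu})-g(x_{\lambda,\nu})$ at $\lambda=0$, where both arguments collapse to the $\xi$-independent point $x$, so the martingale cancellation is automatic in $\mean{\phi'(0;g)}$; the price is the second-order term $\bigl(g''(x_{\lambda,\mu})-g''(x_{\lambda,\nu})\bigr)\bigl(\nu(a(x,\cdot))\,\tstep+\nu(b(x,\cdot))\,\sqrt{\tstep}\,\xi\bigr)^2$, whose second factor carries no Wasserstein factor, and it is exactly this term that costs a third derivative of $g$ (via the mean value theorem applied to $g''$) to avoid the ``$+1$''. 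You instead expand $g$ at the $\xi$-dependent point $\Psi(x,\tstep,\nu)$ in the increment $h$, so you need the extra sub-expansion of $g'$ around $\tilde u=x+\tstep\,\nu(a(x,\cdot))$ to realise the cancellation; you identify this correctly, and the resulting correction is $\order{\tstep}\,W_{k,\beta}(\mu,\nu)$ using only $\norm{g}_{2,\beta}$. A by-product of your decomposition is that every remainder term carries at least one factor of $h$; since $\abs{\mu(b(x,\cdot))-\nu(b(x,\cdot))}\le 2K$, you get $h^2\le c\,\tstep\,W_{k,\beta}(\mu,\nu)$ outright, so neither the restriction $W_{k,\beta}\le 1$ nor the ``$s^2\le s(s+1)$'' manipulation is needed (the latter in any case only yields $W(W+1)$, not $W+1$, without invoking boundedness), and your route actually delivers the first, clean bound already for $g\in F^{2,\beta}$ --- slightly stronger than the lemma. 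Two details to tidy: $\abs{u-\tilde u}\le K\sqrt{\tstep}\,\abs{\xi}$ rather than $K\sqrt{\tstep}$ when $\xi$ is Gaussian (harmless, as the required moments of $\xi$ are finite), and the Lagrange-form intermediate points $\theta$, $\zeta$ should be replaced by integral remainders so that everything remains measurable in $\xi$.
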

\begin{proof}
	Let $x_{\lambda,\mu}=x%
	+\lambda \,\mu (a(x,\cdot))\,\tstep%
	+(\lambda \,\mu (b(x,\cdot))\,\xi\,\sqrt{\tstep}$ and
	\begin{align*}
	\phi(\lambda;g)%
	&=
	g\pp{x_{\lambda,\mu}}-g(x_{\lambda,\nu}).
	\end{align*}
	Then $\delta=\mean{\phi(1;g)}$ and $\phi(0;g)=0$  and
	\begin{align*}
	\phi'(\lambda;g)%
	&=g'(x_{\lambda,\mu}) \bp{\mu (a(x,\cdot))\,\tstep+ \mu (b(x,\cdot)) \,\sqrt{\tstep}\,\xi}\\
&\qquad-g'(x_{\lambda,\nu})\,\bp{\nu(a(x,\cdot))\,\tstep+ \nu( b(x,\cdot)) \,\sqrt{\tstep}\,\xi}.
	\end{align*}
	Note that $\mean{\phi'(0;g)}=g'(x) (\mu-\nu) (a(x,\cdot))\tstep$ as $\mean{\xi}=0$.
	By Taylor's theorem,
	\begin{align*}
	\delta&=\mean{\phi(0;g)%
		+\phi'(0;g)%
		+\int_0^1 \phi''(\lambda;g)\,\lambda\,d\lambda}\\%
	&=	g^\prime(x) \,\tstep\, %
	{(\mu-\nu) (a(x,\cdot)) }%
	+\mean{\int_0^1 \phi''(\lambda;g)\, \lambda \,d\lambda}.
	\end{align*}
	Now,
	\begin{align*}
	&\abs{\phi''(\lambda;g)}\\%
	&\le \abs{g''(x_{\lambda,\mu})} \pp{ \bp{\mu(a(x,\cdot))\tstep+\mu(b(x,\cdot))\sqrt{\tstep}\xi}^2%
	-\bp{\nu(a(x,\cdot))\tstep+\nu(b(x,\cdot))\sqrt{\tstep}\xi}^2}\\
	&\qquad +\abs{g''(x_{\lambda,\mu})-g''(x_{\lambda,\nu}) } \cdot
	\abs{ \nu(a(x,\cdot))\,\tstep+\nu(b(x,\cdot))\,\sqrt{\tstep}\,\xi}^2.
	\end{align*}
	Hence, as $a,b,\xi$ are all bounded,
	\[
	\abs{\delta} \le c\,(1+\abs{x}^\beta)\, W_{k,\beta}(\mu,\nu)\,\pp{\norm{g}_{0,\beta} \, \tstep + \norm{g}_{2,\beta}\, \tstep+ \norm{g}_{3,\beta} \,\tstep}.
	\]
	This now implies the first equation in \cref{imp}. The second is similar.
\end{proof}
\begin{lemma} \label[lemma]{bbar}%
	Let \cref{ass1,ass2} hold. If $a,b\in C^k_K(\real^2)$, then $\bar{a}$ and $\bar{b}$ belong to $C^k_K(\real^2)$.
\end{lemma}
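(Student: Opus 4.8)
The plan is to start from the representations $\bar a(X,t)=\int_\real a(X,y)\,P_t^\mu(dy)$ and $\bar b(X,t)=\int_\real b(X,y)\,P_t^\mu(dy)$ and to push derivatives through the integral, exploiting that $P_t^\mu$ is a probability measure and that $a,b$ together with all their derivatives are bounded by $K$.

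First I would treat the spatial variable, which is the substantive part. For each $j\le k$, since $\sup_{X,y}\abs{\partial_X^j a(X,y)}\le K$ and the constant function $K$ is $P_t^\mu$-integrable, dominated convergence justifies differentiating under the integral, giving $\partial_X^j\bar a(X,t)=\int_\real\partial_X^j a(X,y)\,P_t^\mu(dy)$; taking absolute values and using $P_t^\mu(\real)=1$ yields $\abs{\partial_X^j\bar a(X,t)}\le K$, and continuity of $X\mapsto\partial_X^j\bar a(X,t)$ follows the same way from continuity of $\partial_X^j a$. The identical argument applies to $\bar b$, so $\bar a(\cdot,t),\bar b(\cdot,t)\in C^k_K(\real)$ uniformly in $t\in[0,1]$. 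For the dependence on $t$ and the mixed derivatives I would invoke \cref{ass1}: by \cite[Theorem 2.1]{Antonelli2002-er} the law $P_t^\mu$ has a density $p_t^\mu$ that is smooth in $(t,y)$, and the moment hypothesis on $\mu$ propagates to $P_t^\mu$ locally uniformly in $t$; writing $\bar a(X,t)=\int_\real a(X,y)\,p_t^\mu(y)\,dy$ and differentiating in $t$ under the integral (dominated convergence on each compact $t$-interval, using boundedness of $a$ and its $X$-derivatives against the $t$-derivatives of $p_t^\mu$) produces all joint partials of $\bar a$, and likewise of $\bar b$, up to order $k$ as continuous bounded functions, with the purely spatial derivatives bounded exactly by $K$.

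The main obstacle is precisely this $t$-direction: whereas the spatial bounds need nothing beyond boundedness of the coefficients, differentiability in $t$ rests on the smooth-density conclusion of \cite{Antonelli2002-er} and on checking that the $t$-derivatives of $p_t^\mu$ are integrable enough to dominate the differentiated integrands; under \cref{ass1} this is routine, so the lemma itself is easy once these steps are written out carefully. (Strictly, the constants controlling $t$-derivatives depend on $\mu$ and $K$ rather than equalling $K$, consistent with the paper's convention of absorbing such constants; and in the later use of \cref{bbar} it is the uniform-in-$t$ spatial bound by $K$, together with the separate non-degeneracy $\bar b^2\ge\lambda$ from \cref{ass2}, that actually feeds the subsequent estimates.)
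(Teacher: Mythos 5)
Your proof is correct and takes essentially the same route as the paper, whose entire argument is that under \cref{ass1} the law $P_t^\mu$ has a smooth density (citing Antonelli--Kohatsu-Higa) and that $\bar a,\bar b$ inherit their smoothness from $a$, $b$, and that density; you simply fill in the differentiation-under-the-integral details for the spatial derivatives and the density-based argument for the $t$-derivatives. Your closing caveat --- that the bounds on the $t$- and mixed derivatives are constants depending on $K$ and $\mu$ rather than literally $K$, while the purely spatial derivatives are genuinely bounded by $K$ since $P_t^\mu$ is a probability measure --- is an accurate and worthwhile refinement of the paper's one-line proof.
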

\begin{proof} Under \cref{ass1}, $P_t^\mu$ has a smooth density and $\bar{a}, \bar{b}$ inherit their smoothness from $a$, $b$, and the density. The argument is given in more detail in \cite[page 431]{Antonelli2002-er}.
\end{proof}
\begin{lemma}\label[lemma]{reg_g}
	Let \cref{ass1,ass2} hold and $g_{n,N}\coloneq P_{t_n,t_N}\phi$. Then,  for non-negative integers $r,k$,
	\begin{equation*}
	\norm{g_{n,N}}_{k,\beta}%
	\le c\, \norm{\phi}_{r,\beta}\,
	\frac{1}{(t_N-t_n)^{(k-\min\{k,r\})/2}},\qquad \forall \phi \in F^{k,\beta}.
	\end{equation*}
\end{lemma}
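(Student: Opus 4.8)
The plan is to derive the regularity estimate for $g_{n,N} = P_{t_n,t_N}\phi$ from known smoothing estimates for the non-autonomous SDE \cref{eq:mf_sde2}. By \cref{bbar}, the coefficients $\bar a, \bar b$ belong to $C^k_K(\real^2)$ whenever $a,b$ do, so the non-autonomous semigroup $P_{s,t}$ satisfies exactly the kind of bounds that \cite{Muller-Gronbach2015-vv} establishes for autonomous SDEs — in particular the estimate $\norm{g_n}_{k,\beta}\le \norm{\phi}_{2,\beta}/(1-t_n)^{(k-2)/2}$ invoked in the proof of \cref{sodemain} (their Lemma 8). The point here is to interpolate between two regimes: when $r\ge k$, no blow-up occurs and $\norm{g_{n,N}}_{k,\beta}\le c\,\norm{\phi}_{r,\beta}$ uniformly (differentiating under the expectation, using that $P_{s,t}$ preserves polynomial growth and that $\bar a,\bar b$ have bounded derivatives up to order $k$); when $r<k$, one gains $k-r$ derivatives at the cost of $(t_N-t_n)^{-(k-r)/2}$, the standard parabolic smoothing rate.

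First I would treat the case $r \ge k$. Here $\min\{k,r\}=k$, so the claimed bound is just $\norm{g_{n,N}}_{k,\beta}\le c\,\norm{\phi}_{r,\beta}$ with no singular factor. This follows by differentiating $P_{t_n,t_N}\phi(x)=\mean{\phi(X(t_N;t_n,x))}$ in $x$ up to $k$ times, expressing the derivatives of the flow $x\mapsto X(t_N;t_n,x)$ through the variational equations, and using Gr\"onwall together with the uniform bounds on derivatives of $\bar a,\bar b$ from \cref{bbar} and the finiteness of all moments of $X(t_N;t_n,x)$ (which grow at most polynomially in $x$, controlled by \cref{ass1}). Since $\phi\in F^{k,\beta}\subseteq F^{r,\beta}$ one bounds each term by $\norm{\phi}_{r,\beta}(1+\abs{x}^\beta)$ times moment factors, giving the $(1+\abs{x}^\beta)$-weighted sup bound that defines $\norm{\cdot}_{k,\beta}$.

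Next, the case $r<k$: I would split $g_{n,N}=P_{t_m,t_N}(P_{t_n,t_m}\phi)$ at the midpoint $t_m$ with $t_m-t_n \simeq t_N-t_m \simeq (t_N-t_n)/2$ (choosing the nearest grid point). On the first factor, apply the $r$-regularity bound just proved to get $\norm{P_{t_n,t_m}\phi}_{r,\beta}\le c\,\norm{\phi}_{r,\beta}$; this reduces matters to estimating $\norm{P_{t_m,t_N}\psi}_{k,\beta}$ for $\psi=P_{t_n,t_m}\phi\in F^{r,\beta}$ over an interval of length $\simeq (t_N-t_n)/2$. For that I would quote the smoothing estimate of \cite[Lemma 8]{Muller-Gronbach2015-vv} (extended to the non-autonomous case via \cref{bbar}), which yields a factor $(t_N-t_m)^{-(k-r)/2}\le c\,(t_N-t_n)^{-(k-r)/2}$. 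Chaining the two gives the result; iterating the midpoint split (or covering $[t_n,t_N]$ by finitely many subintervals) handles the discrete-time bookkeeping with only a constant loss.

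The main obstacle is making the non-autonomous smoothing estimate rigorous with the correct singular exponent $(k-r)/2$ uniformly in the grid, since \cite{Muller-Gronbach2015-vv} is stated for autonomous equations; the resolution is that \cref{bbar} supplies precisely the bounded-coefficient hypotheses their proof needs, and time-dependence of $\bar a,\bar b$ enters their argument only through sup-norm bounds on the coefficients and their spatial derivatives, which are uniform in $t\in[0,1]$. The secondary technical point — ensuring $t_m$ can be taken essentially at the midpoint of a discrete grid without the estimate degenerating as $t_N-t_n\to 0$ — is handled by noting the bound is only needed for $t_N-t_n$ bounded below by $\tstep$ in the applications, or more cleanly by absorbing the rounding into the generic constant $c$.
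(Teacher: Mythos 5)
Your overall strategy is the same as the paper's: the lemma is, at bottom, an appeal to \cite[Lemma 8]{Muller-Gronbach2015-vv} together with the observation (via \cref{bbar}) that $\bar a,\bar b$ inherit the $C^k_K$ bounds, so the autonomous argument carries over to the non-autonomous flow $P_{s,t}$. The paper justifies the non-autonomous extension by citing \cite[Chapter 9, Theorem 7]{Friedman2013-ud}, i.e.\ parabolic regularity of the transition density for time-dependent coefficients, which is essentially the same point you make when you say the time-dependence enters only through uniform-in-$t$ bounds on the coefficients and their spatial derivatives. Your direct treatment of the case $r\ge k$ by the variational equations and Gr\"onwall is fine and standard.

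However, the time-splitting step you use for $r<k$ has two problems. First, for a non-autonomous flow the Chapman--Kolmogorov identity reads $P_{t_n,t_N}\phi=P_{t_n,t_m}\bigl(P_{t_m,t_N}\phi\bigr)$; your decomposition $P_{t_m,t_N}\bigl(P_{t_n,t_m}\phi\bigr)$ is the composition in the wrong order and is not equal to $g_{n,N}$. Second, and more substantively, splitting the time interval cannot produce the general exponent $(k-r)/2$ from the $r=2$ version of the smoothing estimate quoted in the proof of \cref{sodemain}: applying ``preservation of $F^{r,\beta}$'' on one half-interval and the known smoothing on the other yields $\norm{\phi}_{2,\beta}\,(t_N-t_n)^{-(k-2)/2}$, not $\norm{\phi}_{r,\beta}\,(t_N-t_n)^{-(k-r)/2}$. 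The upgrade from $r=2$ to general $r$ is not a semigroup-composition fact; it comes from moving $r$ derivatives onto $\phi$ by integrating by parts against the (smooth, Gaussian-bounded) transition density and smoothing only the remaining $k-r$ derivatives --- precisely what the parabolic estimates of \cite{Friedman2013-ud} (or the general form of \cite[Lemma 8]{Muller-Gronbach2015-vv}) supply. If you quote that result in its general $(k,r)$ form, the splitting is superfluous; if you only have the $r=2$ form, the splitting does not close the gap.
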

\begin{proof} For the autonomous case, see \cite[Lemma 8]{Muller-Gronbach2015-vv}.  In this case, the drift and diffusion are non-autonomous. The argument generalises as \cite[Chapter 9, Theorem 7]{Friedman2013-ud} applies also for time-dependent coefficients with the assumptions given.
\end{proof}

The next lemma states a bound on the  local truncation error.
\begin{lemma} \label[lemma]{lemma:a}
	Let \cref{ass1,ass2} hold.
	There exists $c>0$ such that
	\[
	\abs{P_{t_{n-1},t_n}(\phi)(x)%
		- \mean{\Psi (x, \tstep, P_{t_{n-1}}^\mu) } }%
	\le \begin{cases}
	c \,%
	\norm{\phi}_{4,\beta}\,%
	(1+\abs{x}^c)\,%
	\tstep^{2},&\qquad%
	\forall \phi\in F^{4,\beta},\\[1em]
	c \,%
	\norm{\phi}_{2,\beta}\,%
	(1+\abs{x}^c)\,%
	\tstep,&\qquad%
	\forall \phi\in F^{2,\beta}.
	\end{cases}
	\]
\end{lemma}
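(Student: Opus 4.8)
The plan is to reduce the mean-field local truncation error to the classical one-step weak error for the non-autonomous SDE \eqref{eq:mf_sde2}. Write the error as
\[
P_{t_{n-1},t_n}(\phi)(x) - \mean{\Psi(x,\tstep,P^\mu_{t_{n-1}})}
= \Bp{P_{t_{n-1},t_n}(\phi)(x) - \mean{\bar\Psi(x,\tstep)}}
+ \Bp{\mean{\bar\Psi(x,\tstep)} - \mean{\Psi(x,\tstep,P^\mu_{t_{n-1}})}},
\]
where $\bar\Psi(x,\tstep) \coloneq x + \bar a(x,t_{n-1})\,\tstep + \bar b(x,t_{n-1})\,\sqrt\tstep\,\xi$ is the ordinary Euler--Maruyama step for \eqref{eq:mf_sde2} with the frozen-in-time coefficients $\bar a(\cdot,t_{n-1})$, $\bar b(\cdot,t_{n-1})$.

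For the first bracket: by \cref{bbar}, $\bar a,\bar b\in C^4_K(\real^2)$, so \eqref{eq:mf_sde2} satisfies the hypotheses used in the ordinary-SDE theory, and the standard one-step weak-error estimate applies --- this is precisely \cite[Eq. (35)]{Muller-Gronbach2015-vv} (already invoked in the proof of \cref{sodemain}), giving the $c\,\norm{\phi}_{4,\beta}(1+\abs{x}^c)\,\tstep^2$ bound for $\phi\in F^{4,\beta}$ and the weaker $c\,\norm{\phi}_{2,\beta}(1+\abs{x}^c)\,\tstep$ bound for $\phi\in F^{2,\beta}$. Here one must be slightly careful that the coefficients in $\bar\Psi$ are evaluated at the left endpoint $t_{n-1}$ rather than being truly time-independent; but the time-regularity of $\bar a,\bar b$ (they are Lipschitz in $t$ on $[0,1]$, again via \cref{bbar} and the smoothness of the density from \cref{ass1}) means replacing $\bar a(x,t)$ by $\bar a(x,t_{n-1})$ over an interval of length $\tstep$ only perturbs $P_{t_{n-1},t_n}(\phi)(x)$ by $\order{\tstep^2}$ in the $F^{2,\beta}$-weighted norm, which is absorbed. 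For the second bracket: this is exactly $\delta(x)$ from \cref{lemma:b} with $g=\phi$ (or rather $g = $ the appropriate test function, but since we are estimating a single Euler step the relevant $g$ is $\phi$ itself), $\mu = P^\mu_{t_{n-1}}$, and $\nu$ the point mass structure implicit in $\bar\Psi$ --- more precisely, $\Psi(x,\tstep,\nu)$ with $\nu$ chosen so that $\nu(a(x,\cdot)) = \bar a(x,t_{n-1})$, $\nu(b(x,\cdot)) = \bar b(x,t_{n-1})$, i.e. $\nu = P^\mu_{t_{n-1}}$ again. In fact the two inner Euler steps coincide identically once one recognises $\bar a(x,t_{n-1}) = P^\mu_{t_{n-1}}(a(x,\cdot))$ and $\bar b(x,t_{n-1}) = P^\mu_{t_{n-1}}(b(x,\cdot))$, so the second bracket vanishes and only the first contributes. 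Thus the lemma follows directly from the autonomous one-step estimate applied to \eqref{eq:mf_sde2}, together with \cref{bbar} to license that estimate and the time-Lipschitz bound to handle the frozen-coefficient approximation.

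The main obstacle is the frozen-coefficient step: one needs that the solution operator of the genuinely non-autonomous SDE \eqref{eq:mf_sde2} over $[t_{n-1},t_n]$ agrees, to within $\order{\tstep^2}$ (weighted by $\norm{\phi}_{2,\beta}$ and $1+\abs{x}^c$), with the solution operator of the autonomous SDE obtained by freezing the coefficients at $t_{n-1}$, and that the weak-error constant in \cite[Eq. (35)]{Muller-Gronbach2015-vv} is uniform in the freezing time. This requires a Gronwall/Itô-Taylor estimate on $\partial_t \bar a$, $\partial_t \bar b$, which are bounded on $[0,1]$ by \cref{ass1,bbar} and \cite[page 431]{Antonelli2002-er}; combined with the polynomial-growth-in-$x$ moment bounds for the SDE this is routine but is where the real content sits. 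Everything else is bookkeeping against results already established in the excerpt.
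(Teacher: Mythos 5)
Your proposal is correct and follows essentially the same route as the paper: both proofs recognise that, because $\bar a(x,t_{n-1})=P^\mu_{t_{n-1}}(a(x,\cdot))$ and $\bar b(x,t_{n-1})=P^\mu_{t_{n-1}}(b(x,\cdot))$, the mean-field Euler step $\Psi(x,\tstep,P^\mu_{t_{n-1}})$ is exactly the Euler--Maruyama step for the non-autonomous SDE \eqref{eq:mf_sde2}, and then invoke \cref{bbar} to get $\bar a,\bar b\in C^4_K(\real^2)$ (smoothness in $t$ included) so that the classical one-step weak-error estimate of \cite[Lemma 3]{Muller-Gronbach2015-vv} extends to the time-dependent setting. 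The paper states this in two sentences and leaves the non-autonomous extension implicit; your version merely makes explicit the vanishing of the mean-field discrepancy term and the time-regularity needed for the frozen-coefficient comparison.
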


\begin{proof} When $a,b$ are independent of the second argument, this is implied by
	\cite[Lemma 3 with $\gamma=1$]{Muller-Gronbach2015-vv}. In our case, the drift is $\bar a(X, t)$ and diffusion $\bar b(X,t)$, which are smooth functions according to \cref{bbar} and their lemma is easily extended.
\end{proof}

\begin{proof}[Proof of \cref{thm:main_mf}]
	Define the measure $e_N=P_{t_N}^\mu-Q_N$ and consider $\phi\in F^{2,\beta}$. Let $g_{n,N}\coloneq P_{t_n,t_N}(\phi)$, so that $g_{n,n}=\phi$. Decompose the error $e_N(\phi)$ for $N\ge 1$ as
	\begin{equation}\label{err4}
	e_{N}(\phi)%
	=\sum_{n=1}^N E_n^{T_1} + E_n^{T_2} + E_n^G ,
	\end{equation}
	where $E_n^{T_1}$ represents the error from the Euler--Maruyama discretisation of the non-autonomous system, $E_n^{T_2}$ represents the error from the mean-field, and $E_n^G$ represents the error from \cref{alg} applied to $g_{n,N}$. In detail,  let
		\begin{align*}
		\mathsf{I}%
		&\coloneq  Q_{n-1}\, ( P_{t_{n-1},t_n}( P_{t_n,t_N}(\phi)) ) %
		=\int_\real P_{t_{n-1},t_N}( g_{n,N})(x)\,Q_{n-1}(dx),\\%
		\mathsf{II}%
		&\coloneq
		\int_\real \mean{g_{n,N}(\Psi(x, \tstep, P_{t_{n-1}}^\mu))}\,Q_{n-1}(dx),\\%
		\mathsf{III}%
		&\coloneq
		Q^\pm_{n}  ( P_{t_{n}, t_N} (\phi) )
		=\int_\real \mean{g_{n,N}(\Psi(x, \tstep, Q_{n-1})}\,Q_{n-1}(dx),\\
		\mathsf{IV}%
		&\coloneq
		Q_{n} ( P_{t_{n},t_N} (\phi) ),
		\end{align*}
	where $\mean{\cdot}$ denotes the expectation over $\xi$ in the definition of $\Psi$ (see \cref{psi}). Consider the telescoping sum
	\[
	e_{N}(\phi)%
	=\sum_{n=1}^N \pp{Q_{n-1}(P_{t_{n-1},t_N}(\phi) ) %
		- Q_{n} (P_{t_n,t_N} (\phi) ) \strutB  }.
	\]
	We have \cref{err4}  for $E_n^{T_1}=\mathsf{I}-\mathsf{II}$, $E_n^{T_2}=\mathsf{II}-\mathsf{III}$, $E_n^G=\mathsf{III}-\mathsf{IV}$. We estimate the three sources of error in turn. We focus on the rough case (i.e., $\phi\in F^{2,\beta}$) and briefly note the differences with the smooth case.
%	\begin{enumerate}
%	\item

{Local truncation error for non-autonomous SDE:} From \cref{lemma:a}, with $n<N$,
	\begin{align*}
	\abs{\mathsf{I}-\mathsf{II}}%
	&=
\abs{	Q_{n-1}  \pp{ P_{t_{n-1},t_n} ( g_{n,N})(x) %
		-\mean{g_{n,N}(\Psi(x, \tstep, P^\mu_{t_{n-1}}))} } }\\
	&\le c\,\norm{g_{n,N}}_{4,\beta} %
	\bp{1+Q_{n-1}(\abs{x}^c)\strutB} %
	\tstep^{ 2 }.
	\end{align*}
	By \cref{mf_mom}, $Q_n(\abs{x}^c)$ is uniformly bounded and, by \cref{reg_g}, $\norm{g_{n,N}}_{4,\beta}$ is bounded by $c\,\norm{\phi}_{2,\beta}/(t_N-t_n)$.  Similarly, for $n=N$,  $	\abs{\mathsf{I}-\mathsf{II}} \le c \norm{\phi}_{2,\beta}(1+Q_{n-1}(\abs{x}^c)) \tstep$. Hence, $\sum_{n=1}^N \abs{ \mathsf{I}-\mathsf{II} } \le c\, \norm{\phi}_{2,\beta} \,\tstep \,\abs{\log \tstep}$.
		In the smooth case, the estimate is the same, without the $(t_N-t_n)$ singularity and hence without the $\log$ term.

%	\item
 	{Mean-field error:}  From \cref{lemma:b},
	\begin{align*}
\abs{	\mathsf{II}-\mathsf{III} }%
	&\le
	\abs{ Q_{n-1}\pp{%
	 \mean{ g_{n,N}( \Psi(x,\tstep,  P^\mu_{t_{n-1}} ) ) } %
	-  \mean{ g_{n,N}(\Psi(x,\tstep ,Q_{n-1})) }  } }\\
	&\le c%
	\pp{1+Q_{n-1}(\abs{x}^\beta) }\,%
	\tstep\,%
	\norm{g_{n,N}}_{3,\beta}\,%
	W_{4,\beta}(P^\mu_{t_{n-1}}, Q_{n-1}).
	\end{align*}
 By \cref{mf_mom}, $Q_n(\abs{x}^\beta)$ is uniformly bounded and, by \cref{reg_g}, $\norm{g_{n,N}}_{3,\beta}$ is bounded by $c\norm{\phi}_{2,\beta}/(t_N-t_n)^{1/2}$ for $n=1,\dots,N-1$.  Hence, \[\abs{\mathsf{II}-\mathsf{III} } \le c\, \tstep\, \norm{\phi}_{2,\beta}\,W_{2,\beta}(P^\mu_{t_{n-1}},Q_{n-1})\frac{1}{(t_N-t_n)^{1/2}}.\]  For $n=N$, \[
 \abs{\mathsf{II}-\mathsf{III} } \le K \,\pp{ 1+ Q_{N-1}(x^\beta) }\, \tstep\, %
 	 \norm{\phi}_{2,\beta}\,%
 	  W_{2,\beta} (P^\mu_{t_{N-1}}, Q_{N-1} ) + \norm{\phi}_{2,\beta}\, K\,\tstep.\]
 	  In the smooth case, $\phi\in F^{\infty,\beta}$ and $a,b\in C^{\infty}_K(\real^2)$, so that $\norm{g_{n,N}}_{3,\beta}$ is uniformly bounded and $\abs{\mathsf{II}-\mathsf{III}} \le  c\, \tstep\, \norm{\phi}_{\infty,\beta}\,W_{\infty,\beta}(P^\mu_{t_{n-1}},Q_{n-1})$.
	\item 	{\cref{alg} error:} We consider the case where \cref{alg} is applied at every step $n=1,\dots,N-1$.  Then, for each $n$,
	\[
	\mathsf{III}-\mathsf{IV}%
	= Q^\pm_{n} (g_{n,N})-Q_n(g_{n,N}) .%
	\]
	Here $Q_{n}$ is the measure given by approximating $Q^\pm_{n}$ by \cref{alg} and the associated error is described by \cref{t}. Thus, recalling that
	$R=\sqrt{(4/\lambda)\,\abs{\log\tstep}}$,
	\[
	\abs{\mathsf{III}-\mathsf{IV}}
	\le
	 c\, (2R)^{2 \,m_n-1}\,  \tstep^{1/2}\,%
	 \frac1 { (2\,m_n)! }\,%
	 \norm{g_{n,N}^{ (2\,m_n) }} _\infty + c\norm{\phi}_{0,\beta}\tstep^2.
	\]
Applying  \cref{reg_g},
	\begin{align*}
	\abs{\mathsf{III}-\mathsf{IV}}
	&\le
	c\, (2R)^{2\, m-1}\,  %
	\tstep^{1/2}\,
	\frac1 { (2\,m)! }%
	\, \norm{\phi}_{2,\beta}\,
	\frac{1}{(t_N-t_n)^{m-1}}%
  + c\norm{\phi}_{0,\beta}\tstep^2\\%
	&\le
	c\, (2R)^{2\, m-1}\,  %
	\frac1 { (2\,m)! }%
	\, \norm{\phi}_{2,\beta}\,
	\frac{1}{\tstep^{m-5/2}} \frac1{t_N-t_n}+ c\norm{\phi}_{0,\beta}\tstep^2.
	\end{align*}
	This is bounded by $c \norm{\phi}_{2,\beta}\tstep^2/(t_N-t_n)$ if
	$\log \Gamma(2\,m+1)\ge M_1^{\mathrm{mf}}(m, \tstep, n+k)$ for $M_1^{\mathrm{mf}}$ defined by \cref{eq:MMM}.

	In the smooth case, $\abs{\mathsf{III}-\mathsf{IV}}\le c \tstep^2 \norm{\phi}_{\infty,\beta}$  if
	$\log \Gamma(2\,m+1)\ge M_1^{\mathrm{smooth}}(m, \tstep, n+k)$ for $M_1^{\mathrm{smooth}}$ defined by \cref{eq:MMM1}.
	%\end{enumerate}
	Sum the three upper bounds to show that
		\begin{align*}
		e_N(\phi)%
		&\le c\,\norm{\phi}_{2,\beta}\,\tstep\,\abs{\log \tstep}+c\,\norm{\phi}_{2,\beta}\,
		\sum_{n=1}^{N-1}%
			{\frac{\tstep}{(t_N-t_n)^{1/2} } W_{2, \beta}(P^\mu_{t_{n-1}},Q_{n-1})}%
			\\&\qquad+ c \,\norm{\phi}_{2,\beta}\, \tstep\,W_{2,\beta}(P^\mu_{t_{N-1}}, Q_{N-1}),\qquad t_N\le 1.
		\end{align*}
		Take the supremum over $\phi \in F^{2,\beta}$,
		\begin{align*}
	&W_{2,\beta}(P^{\mu}_{t_{N}}, Q_{N})\\%
	&\le
	c\,\tstep\,\abs{\log \tstep}+%
	\sum_{n=1}^{N-1}{%
	\frac{\tstep}{(t_N-t_n)^{1/2}}\, W_{2,\beta}(P^\mu_{t_{n-1}},Q_{n-1})} + c \,\tstep\,W_{2,\beta}(P^\mu_{t_{N-1}}, Q_{N-1}).
	\end{align*}
	We assume that $W_{2,\beta}(P^\mu_0, Q_0) \le c\,\tstep$ in \cref{ass:initial}.
		Gronwall's inequality completes the proof of the rough case.
		In the smooth case, similar arguments show that
	\begin{align*}
	W_{\infty,\beta}(P^{\mu}_{t_{N}}, Q_{N})%
	&\le
	c\,\tstep+%
	\sum_{n=1}^N{%
		{\tstep} \,W_{\infty,\beta}(P^\mu_{t_{n-1}},Q_{n-1})}
	\end{align*}
	and Gronwall's inequality again gives the result.
\end{proof}

Consider \cref{eq:mf_sde_ref}, where a nonlinear dependence on the time-$t$ distribution is allowed via functions $A,B\colon\real\to\real$. Our numerical method generalises by replacing the definition of $\Psi$ in \cref{psi} with
\begin{equation}
  \Psi(x,\tstep,Q)%
  \coloneq x%
  + \tstep\,A( Q (a(x,\cdot)))%
  +\sqrt{\tstep}\, B( Q( b(x,\cdot)))\,\xi.
\end{equation}
Gauss quadrature can be used in the same way with the same choice of $m_n$ and the same estimates apply as long as $A,B$ have regularity consistent with \cref{lemma:b,lemma:a}. This leads to the following convergence and complexity result.
\begin{corollary}
  Let \cref{ass1,ass:initial,ass2} hold and $A,B \in C_K^k(\real^d)$. Let the number of Gauss points $m$ be given by \cref{eq:MMM} and  $P^\mu_t$ be the solution of \cref{eq:mf_sde_ref} with initial distribution $\mu$. Then, for some $c>0$,
  \[
  \max_{t_N\le 1}
  \abs{ P^\mu_{t_N}(\phi)%
    -Q_N(\phi)}
  \le c\,%
  \norm{\phi}_{2,\beta}\,%
  \tstep\,\abs{\log \tstep}%
  ,\qquad %
  \forall \phi\in F^{2,\beta}.
  \]
  If $Q_0$ is cheap to compute (see \cref{thm:main_mf}) and $m_0=\order{(\abs{\log(\tstep)}/\tstep)^{1/2}}$, the total work is   $\order{\abs{\log \tstep}^{3/2}/\tstep^{5/2}}$.
  If in addition to \cref{ass1}, we have $W_{\infty,\beta}(\mu,Q_0)\le c\,\tstep$ and in addition to \cref{ass2}, we have $a,b\in C^\infty_K(\real^2)$ and $A,B\in C^\infty_K(\real)$, and the number of Gauss points $m$ is given by \cref{eq:MMM1}, then
  \[
    \max_{t_N\le 1}
    \abs{ P^\mu_{t_N}(\phi)%
      -Q_N(\phi)}
    \le c\,%
    \norm{\phi}_{\infty,\beta}\,%
    \tstep%
    ,\qquad %
    \forall \phi\in F^{\infty,\beta}.
    \]
		  If $Q_0$ is cheap to compute  and $m_0=\order{\abs{\log \tstep}}$, the total work is   $\order{\abs{\log \tstep}^{3}/\tstep}$.
\end{corollary}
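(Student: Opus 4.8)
The plan is to rerun the proof of \cref{thm:main_mf} essentially verbatim, the only genuinely new work being to re-establish \cref{lemma:b} and \cref{lemma:a} for the modified one-step map, since nothing downstream of those two lemmas sees $\Psi$ directly. First I would set up the companion non-autonomous SDE: with $\bar a(x,t)\coloneq P^\mu_t(a(x,\cdot))$ and $\bar b(x,t)\coloneq P^\mu_t(b(x,\cdot))$ as in \cref{err2}, let $X(t;s,x)$ solve $dX=A(\bar a(X,t))\,dt+B(\bar b(X,t))\,dW(t)$ with $X(s;s,x)=x$, so that again $P^\mu_t(\phi)=\mu(P_{0,t}(\phi))$ for $P_{s,t}(\phi)(x)\coloneq\mean{\phi(X(t;s,x))}$. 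The key structural observation is that post-composition preserves the $C^k_K$-type bounds: for fixed $x$, $\norm{a(x,\cdot)}_{k,\beta}\le K$ and $\norm{b(x,\cdot)}_{k,\beta}\le K$ because $a,b\in C^k_K(\real^2)$; by \cref{bbar} the same holds for $\bar a,\bar b$; and then the chain rule (Fa\`a di Bruno) with $A,B\in C^k_K(\real)$ gives $A\circ\bar a,\,B\circ\bar b\in C^k_{K'}(\real^2)$ for a constant $K'=K'(k,K)$. One also checks the diffusion stays non-degenerate, i.e.\ $B(\bar b(x,t))^2$ is bounded below on $\real\times[0,1]$ --- this is the single place a structural condition on $B$ is needed (it is the form in which the non-degeneracy hypothesis of \cref{ass2} must be read for \cref{eq:mf_sde_ref}, and it holds whenever $B$ is bounded away from zero). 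Finally, $\abs{A(Q(a(x,\cdot)))}\le K$ and $\abs{B(Q(b(x,\cdot)))}\le K$ for every probability measure $Q$, so the moment bound \cref{mf_mom}, the per-step Gauss estimates \cref{bounds,corr,t}, the support-reduction bound \cref{supred}, and hence the admissible choices \cref{eq:MMM} and \cref{eq:MMM1} of $m$ all carry over unchanged.

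Next I would redo \cref{lemma:b}. With $x_{\lambda,\mu}\coloneq x+\lambda A(\mu(a(x,\cdot)))\,\tstep+\sqrt{\tstep}\,B(\mu(b(x,\cdot)))\,\xi$ and $\phi(\lambda;g)\coloneq g(x_{\lambda,\mu})-g(x_{\lambda,\nu})$, differentiating in $\lambda$ and Taylor-expanding around $\lambda=0$, the term $\mean{\phi'(0;g)}$ equals $g'(x)\,\tstep\,\bp{A(\mu(a(x,\cdot)))-A(\nu(a(x,\cdot)))}$ because $\mean{\xi}=0$; the mean value theorem on $A$ together with $\abs{(\mu-\nu)(a(x,\cdot))}\le\norm{a(x,\cdot)}_{k,\beta}\,W_{k,\beta}(\mu,\nu)\le K\,W_{k,\beta}(\mu,\nu)$ bounds this by $c\,\tstep\,\norm{g}_{1,\beta}(1+\abs{x}^\beta)\,W_{k,\beta}(\mu,\nu)$. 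The second-order remainder $\mean{\int_0^1\phi''(\lambda;g)\,\lambda\,d\lambda}$ is estimated exactly as in the original proof, now additionally bounding $A',A'',B',B''$ by $K$ and again controlling the measure-differences inside $A$ and $B$ by $W_{k,\beta}$; this reproduces both inequalities of \cref{imp}. For \cref{lemma:a}, note that $\Psi(x,\tstep,P^\mu_{t_{n-1}})$ is precisely the Euler--Maruyama step for the non-autonomous SDE above, whose coefficients $A\circ\bar a,\,B\circ\bar b$ lie in $C^k_{K'}(\real^2)$; so the argument of \cite[Lemma 3 with $\gamma=1$]{Muller-Gronbach2015-vv}, extended to time-dependent coefficients as in \cref{reg_g}, applies and yields the $\order{\tstep^2}$ bound on $F^{4,\beta}$ and the $\order{\tstep}$ bound on $F^{2,\beta}$. \cref{reg_g} itself requires no change once the coefficients are known to be $C^k_{K'}$ and non-degenerate.

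With those in hand, the rest is bookkeeping identical to the proof of \cref{thm:main_mf}: decompose $e_N(\phi)=\sum_n\bp{E_n^{T_1}+E_n^{T_2}+E_n^G}$ through the quantities $\mathsf{I}$--$\mathsf{IV}$ built from the new $\Psi$; bound $E_n^{T_1}$ by the revised \cref{lemma:a} with \cref{reg_g} and \cref{mf_mom}, bound $E_n^{T_2}$ by the revised \cref{lemma:b} with \cref{reg_g} and \cref{mf_mom}, and bound $E_n^G$ by \cref{t} with $m$ from \cref{eq:MMM} in the rough case (and from \cref{eq:MMM1} in the smooth case, where $a,b\in C^\infty_K$ and $A,B\in C^\infty_K$ make $\norm{g_{n,N}}_{3,\beta}$ uniformly bounded); then sum, take the supremum over $\phi$ to obtain a recursion for $W_{2,\beta}(P^\mu_{t_N},Q_N)$ (resp.\ $W_{\infty,\beta}$), insert $W_{2,\beta}(\mu,Q_0)\le c\,\tstep$ from \cref{ass:initial} (resp.\ $W_{\infty,\beta}(\mu,Q_0)\le c\,\tstep$), and close with Gronwall's inequality. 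The complexity estimates are then immediate from \cref{blimey2}. I expect the only real obstacle to be the first paragraph --- verifying that composition with $A$ and $B$ preserves the $C^k_K$-type bounds uniformly in the spatial variable and keeps the diffusion non-degenerate --- since once that is settled every remaining step is a line-by-line copy of the argument already given for \cref{thm:main_mf}.
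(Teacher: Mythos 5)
Your proposal is correct and follows exactly the route the paper intends: the paper offers no separate proof of this corollary, merely asserting that the argument for \cref{thm:main_mf} carries over once $A,B$ have regularity consistent with \cref{lemma:b,lemma:a}, and your plan (re-establish those two lemmas for the modified $\Psi$ via the chain rule, check that composition preserves the $C^k_K$ bounds and the non-degeneracy of $B(\bar b)$, then copy the telescoping/Gronwall bookkeeping) is precisely that argument spelled out. Your observation that the non-degeneracy condition of \cref{ass2} must be read as a lower bound on $B(\bar b(x,t))^2$ for the modified equation is a correct and worthwhile clarification of a point the paper leaves implicit.
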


\section{Numerical experiments}\label{num}

We now present a set of numerical experiments, exhibiting the behaviour of GQ1 as described in \cref{sec:alg}. We also try two methods that converge with second order.

%\begin{description}
	\textbf{GQ1e} The Richardson or Talay--Tubaro extrapolation involves taking two first-order approximations $P(\tstep)$ and $P(\tstep/2)$ of a quantity $P$, and computing $\hat P\coloneq 2\,P(\tstep/2)-P(\tstep)$. If $P$ has a second-order Taylor expansion,  $\hat{P}$ is a second-order accurate approximation to $P$. In the case that $P$ is generated by GQ1, this is very simple to code and implement and is included in the experiments. Thus, we define GQ1e to be the quadrature rule $Q$ defined by $2Q^{\tstep/2}-Q^{\tstep}$, where $Q^\tstep$ is the result of applying GQ1 with time step $\tstep$. The method results in a quadrature with some negative weights, which can lead to non-physical results when used with highly oscillatory $\phi$ and the method should be used with caution.

\textbf{GQ2}
Suppose that the mean-field SDE has the following structure
\begin{equation}\label{factor}
dX^\mu(t)%
=a(X^\mu(t),P_t^\mu(r)) \,dt%
+ b(X^\mu(t), P_t^\mu(r) )  \,dW(t)
\end{equation}
for given functions $a,b\colon \real\times \real^d\to\real$ and $r\colon \real\to\real^d$. Mean-field SDEs of this type, involving moments of the solution in the coefficient functions or vectors of monomials $r(x)=[x,x^2,\dots,x^d]$, were introduced in \cite{peter2} for example. By working out the second-order Ito--Taylor expansion, the following generalisation, which we name GQ2, of the Euler--Maruyama-based method GQ1 can be derived: let $\Delta W= \tstep\, \xi$ for $\xi$ given by three-point distribution with $\prob{\xi=0}=2/3$ and $\prob{\xi=\pm \sqrt 3}=1/6$ (i.e., the three-point Gauss--Hermite rule for $\Nrm(0,1)$). For a given measure $Q_n$, define $Q_{n+1}$ as the distribution of $X_{n+1}$ given by
\begin{align*}
	X_{n+1}%
	&=X+a \,\tstep+ b\,\Delta W%
	+\frac12 \partial_1 b\, b\,(\Delta W^2-\tstep)\\
	&\qquad+\frac12\pp{\partial_1 a\, b+\nabla b\cdot \mathcal{L} a+\frac 12 \partial_{11} b \,b^2}\, \Delta W \,\tstep\\%
	&\qquad+\frac 12 \pp{\nabla a\cdot \mathcal{L}a+\frac12\partial a_{11}\,b^2}\,\tstep ^2
\end{align*}
for
\[
\mathcal{L}a%
\coloneq\bp{a,%
	Q_n\pp{\partial_1 r\,	{ a} +\frac 12 \partial_{11} r\, b^2},\dots,%
	Q_n\pp{\partial_d r\,{ a} +\frac 12 \partial_{dd} r\, b^2}
},
\]
where $X\sim Q_n$ (independent of $\xi$) and all functions $a,b$ are evaluated at $(X, Q_n(r))$.
Here, $\partial_i$ and $\partial_{ii}$ denotes the first- and second-derivatives with respect to the $i$th argument, $\nabla a$ denotes the usual gradient in $\real^{d+1}$, and $\cdot$ the $\real^{d+1}$ inner product.

Though we do not include it, GQ2 submits to similar techniques of error analysis to GQ1. We expect second-order convergence in the Wasserstein distance $W_{4,\beta}$, so that test functions require two extra derivatives compared to GQ1.  The equation for the number of Gauss points $m_n$ needs to be adjusted by taking $p=2$ in \eqref{eq:MM},\eqref{eq:MMM}, or \eqref{eq:MMM1} as appropriate.
The total work for a given accuracy $\varepsilon$ is given by replacing $\tstep$ replaced by $\varepsilon^{1/2}$ in \cref{blimey,blimey2} (and increasing the regularity by two for all coefficients). For smooth mean-field equations, the work is $\order{\abs{\log \varepsilon}^{3} \varepsilon^{-1/2}}$.
%\end{description}

We expect second-order convergence for both these method and the initial distribution $Q_0$ should be chosen with
$W_{4,\beta}(\mu,Q_0)\le c\tstep^2$.

% As discussed in \cref{ss:composite}, Gauss quadrature can also be applied in composite form. This means the interval $[-R,R]$ is partitioned and Gauss quadrature applied on each sub-interval. This is advantageous when the support is large, as indicated by the $R^{2m-1}$ factor in \cref{c1}. If we partition $[-R,R]$ uniformly into sub-intervals of width $L$, the $R^{2m-1}$ is reduced to $(R/L)\,L^{2m-1}$. We take advantage of composite Gauss rules in \cref{sseg3}, where the initial distribution is Gaussian.

The code for running these experiments is available for download
 \cite{sdelab}.

\subsection{Geometric Brownian motion}\label{example:gbm}
We consider the ordinary SDE for geometric Brownian motion given by
\[
dX(t)=\alpha\, X(t)\,dt + \sigma\, X(t) \,dW(t),\qquad %
X(0)=x,
\]
for parameters $\alpha,\sigma$ and initial data $x$.
For $\alpha=-1$, $\sigma=0.5$, and $x=1$, the exact value $\mean{X(1)}=e^{-1}$. We use this as a test case to compare with the multilevel Monte Carlo (MLMC) method, as in \cite[Example 8.49]{book}. The CPU time is compared against error, averaging over ten runs of MLMC to reduce the variance. The CPU time for the MLMC Matlab implementation (provided in \cite{book}) is scaled to match GQ1 at the first data point. See \cref{gq_gbm}. The errors for the Gauss quadrature methods are decaying at a much faster rate as the CPU time is increased. Theoretically, for a smooth problem like this, the work to achieve accuracy $\varepsilon$ for GQ1 behaves like $\varepsilon^{-1}\abs{\log \varepsilon}^{3}$, for GQ1e and GQ2 like $\varepsilon^{-1/2}\abs{\log \varepsilon}^{3}$, and for MLMC   like $\varepsilon^{-2}$. This is observed in the figure. Notice however that the linearly growing coefficients do not satisfy our assumptions.

\begin{figure}
	\centering
	\includegraphics[width=0.45\textwidth]{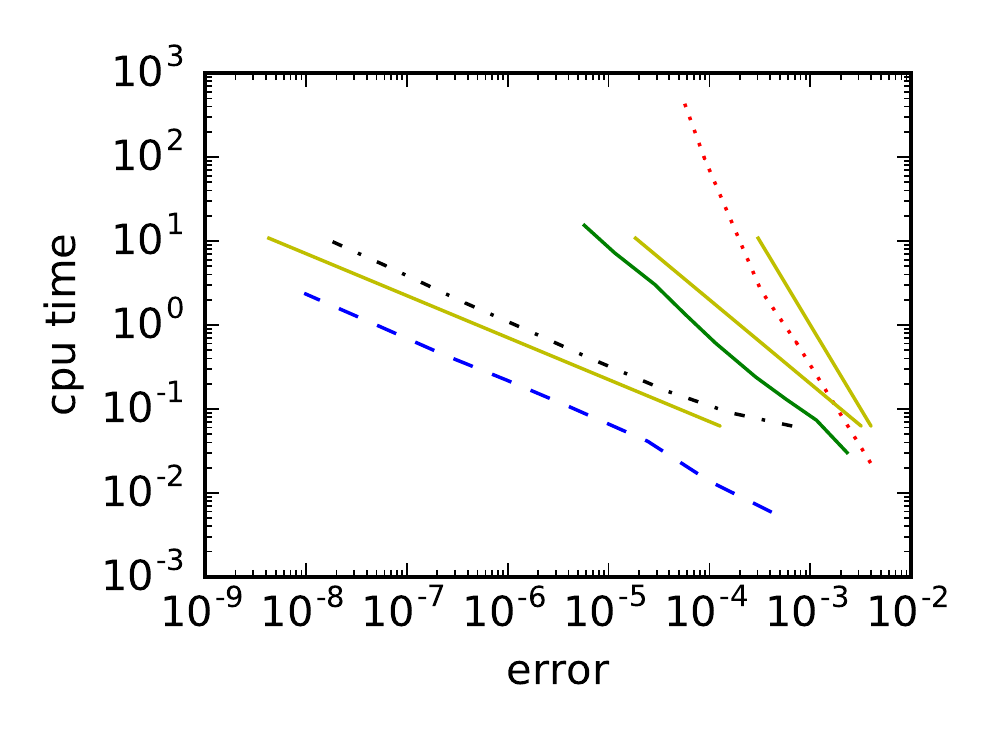}
	% run_gq_gbm.jl
	\caption{Geometric Brownian motion: The green line shows GQ1; the blue-dashed line shows GQ1e; the black-dash-dot line shows GQ2;  the red-dotted line shows MLMC. The cpu time for MLMC is scaled to match GQ1 at the first data point. Errors are computed relative to the exact value. The yellows lines indicate reference slopes of $-2$, $-1$, and $-1/2$.} \label{gq_gbm}
\end{figure}

\subsection{Generalised Ornstein--Uhlenbeck process} %eg2
Consider the following generalisation of the Ornstein--Uhlenbeck SDE to a linear mean-field SDE:
\[
dX(t)=\bp{\alpha \,X(t)+ \beta \,\mean{X(t)}\strutB}\,dt + \sigma \,dW(t),\qquad X(0)=x,
\]
for parameters $\alpha,\beta,\sigma\in\real$ and initial data $x\in\real$.
By using Ito's formula, its first two moments can easily be calculated as
\begin{equation}\label{gbm_exact}
\mean{X(t)}=x\, e^{(\alpha+\beta)\,t},\qquad
\mean{X(t)^2}=x^2\,e^{2\,(\alpha+\beta)\,t } + \frac{\sigma^2}{2\,\alpha} \bp{e^{2\,\alpha\,t}-1}.
\end{equation}
It is used as a test case in \cite{Ricketson2015-xv},
with $\alpha=-1/2$, $\beta=4/5$, $\sigma^2=1/2$, $x=1$. We use these parameters and the results are  shown in \cref{eg2}. First-order convergence is observed for the first and second moments for GQ1,  and second-order convergence is observed  for both GQ1e and GQ2. The work is proportional to $\varepsilon^{-1}$ and $\varepsilon^{-1/2}$, reflecting the estimates (up to $\log$ terms) for smooth problems in \cref{blimey2}.

\begin{figure}
	\centering
	\includegraphics[width=0.45\textwidth]{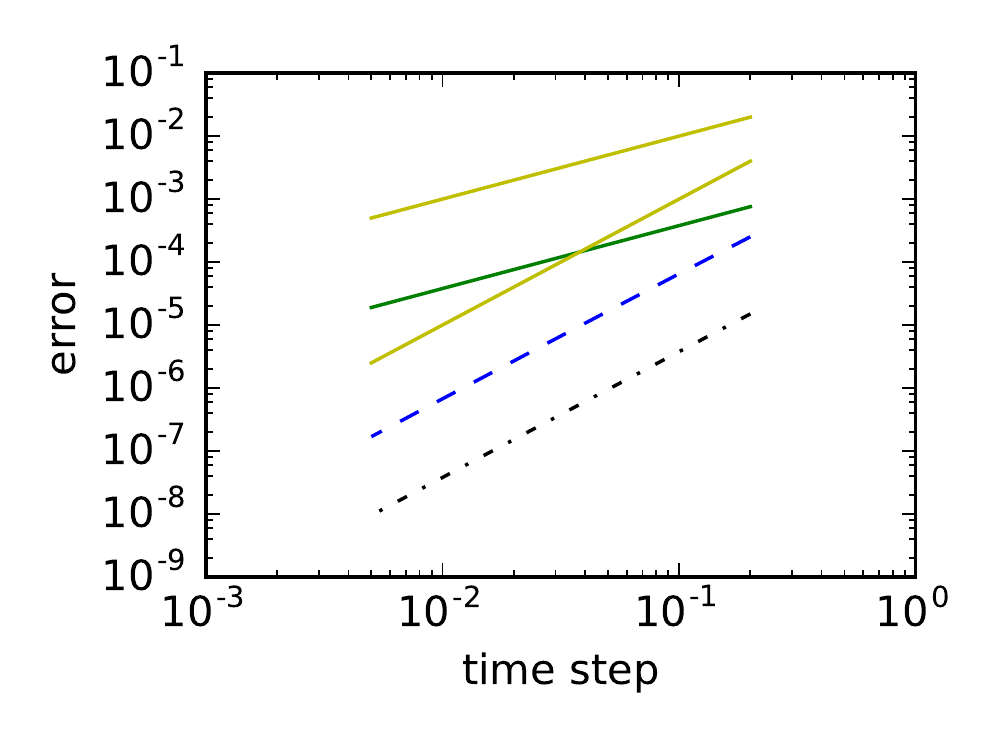}
	\includegraphics[width=0.45\textwidth]{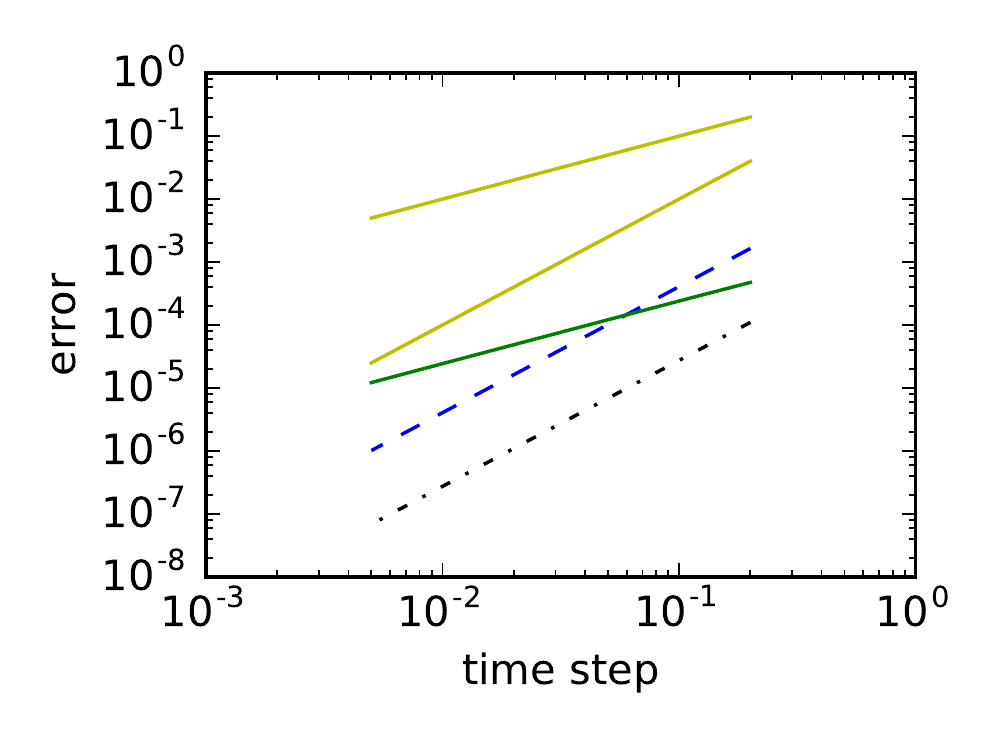}\\
	\includegraphics[width=0.45\textwidth]{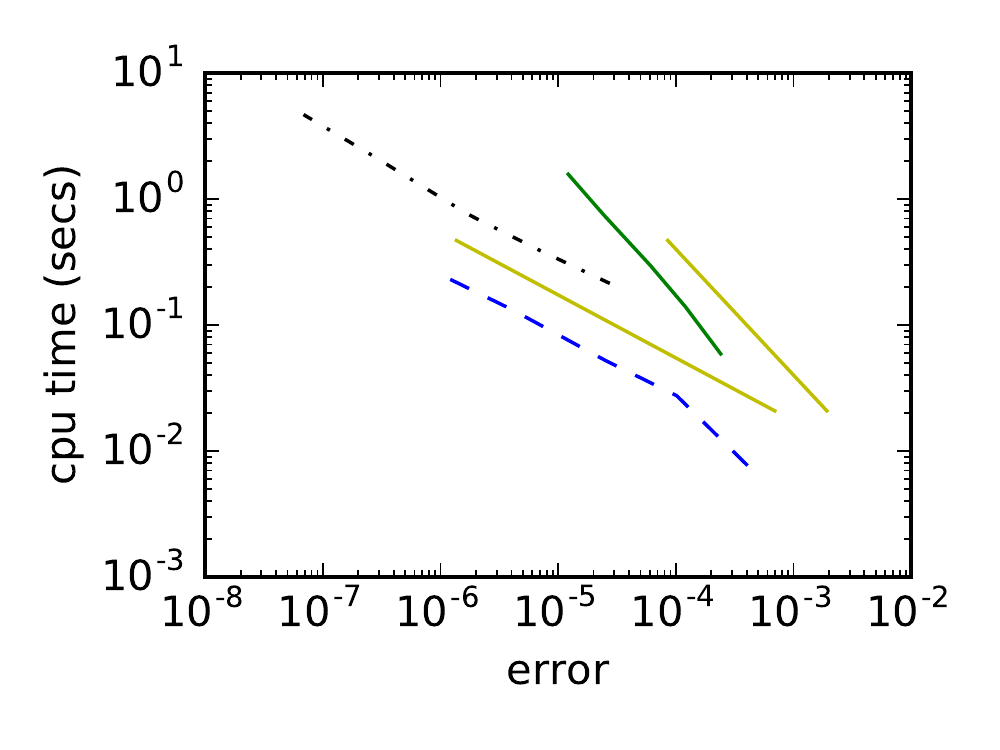}
	\includegraphics[width=0.45\textwidth]{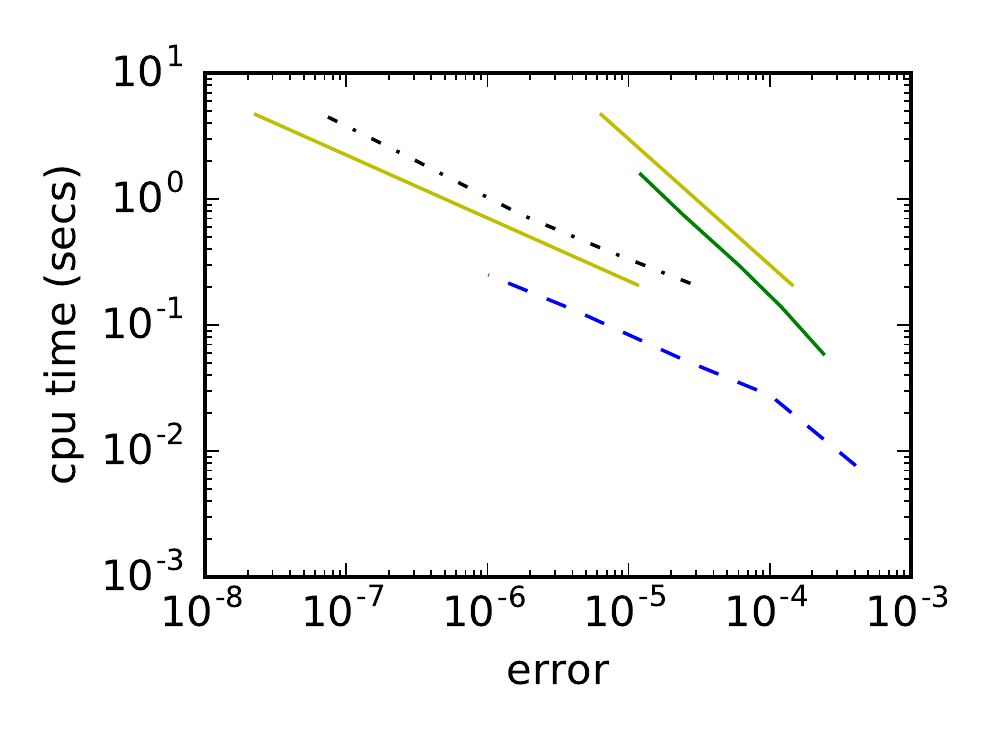}
	\caption{Generalised Ornstein--Uhlenbeck SDE: The green line shows GQ1; the blue-dashed line GQ1e; the black-dash-dot line shows GQ2. The yellow lines show reference slopes of $1$ and $2$ (top) and $-1/2$ and $-1$ (bottom) . The upper left- (resp., right-) hand plot shows the error in computing the mean (resp., second moment). The error is computed using reference values provided by \cref{gbm_exact}. The bottom plots shows the cpu time in seconds. } \label{eg2}
\end{figure}

\subsection{Polynomial drift}
The following mean-field Ito SDE
\begin{equation}\label{polydrift}
dX(t)%
=\bp{\alpha \,X(t)+\mean{X(t) }-X(t)\, \mean{X(t)^2}\strutB}\,dt%
+ X(t)\,dW(t),\qquad X(0)=x,
\end{equation}
for a parameter $\alpha\in\real$,
is considered in \cite{peter1}, where the first two moments of
$X(t)$ are shown to satisfy the system of ODEs
\begin{gather}\label{ode}\begin{split}
\frac{d\mean{X}}{dt}%
&=(\alpha+1)\,\mean{X}-\mean{X}\,\mean{X^2}\\
\frac{d\mean{X^2}}{dt}%
&=(2\,\alpha+1)\,\mean{X^2}+2\,\bp{\mean X}^2 - 2\,\bp{\mean {X^2}}^2,
\end{split}\end{gather}
with initial conditions $\mean{X}=x$ and $\mean{X^2}=x^2$.
We use this as a test with $\alpha=2$ and $x=1$ and results are shown in \cref{eg1}. Again first-order (GQ1) and second-order (GQ1e and GQ2) convergence is observed for the first and second moments and the cpu times behave in line with \cref{blimey2}.
\begin{figure}
	\centering
	\includegraphics[width=0.45\textwidth]{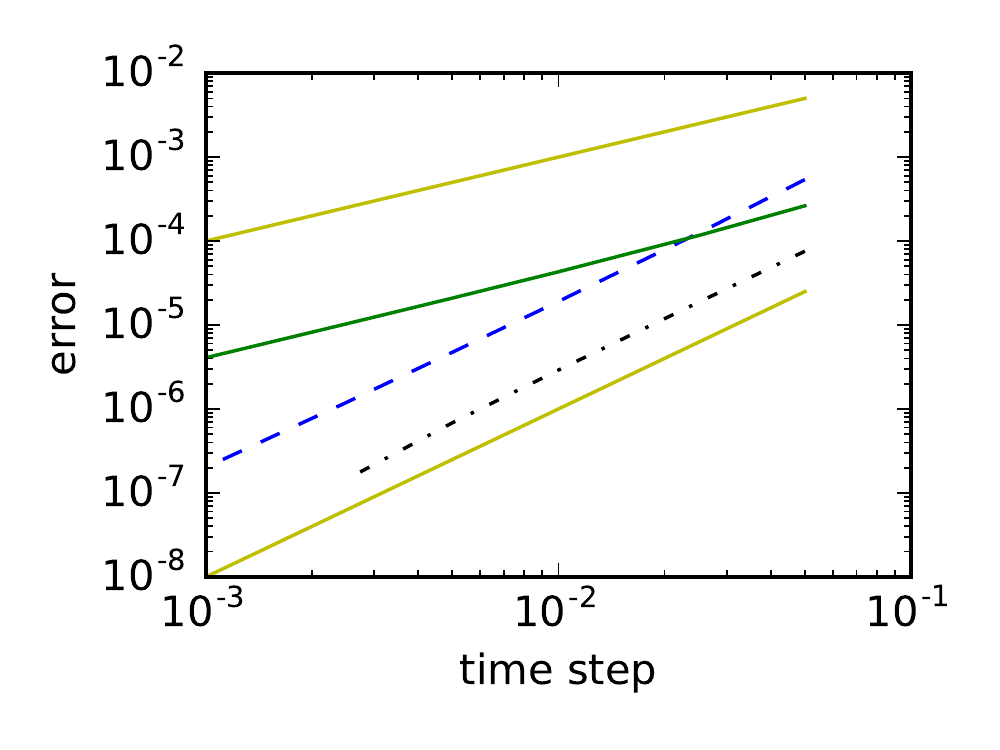}
	\includegraphics[width=0.45\textwidth]{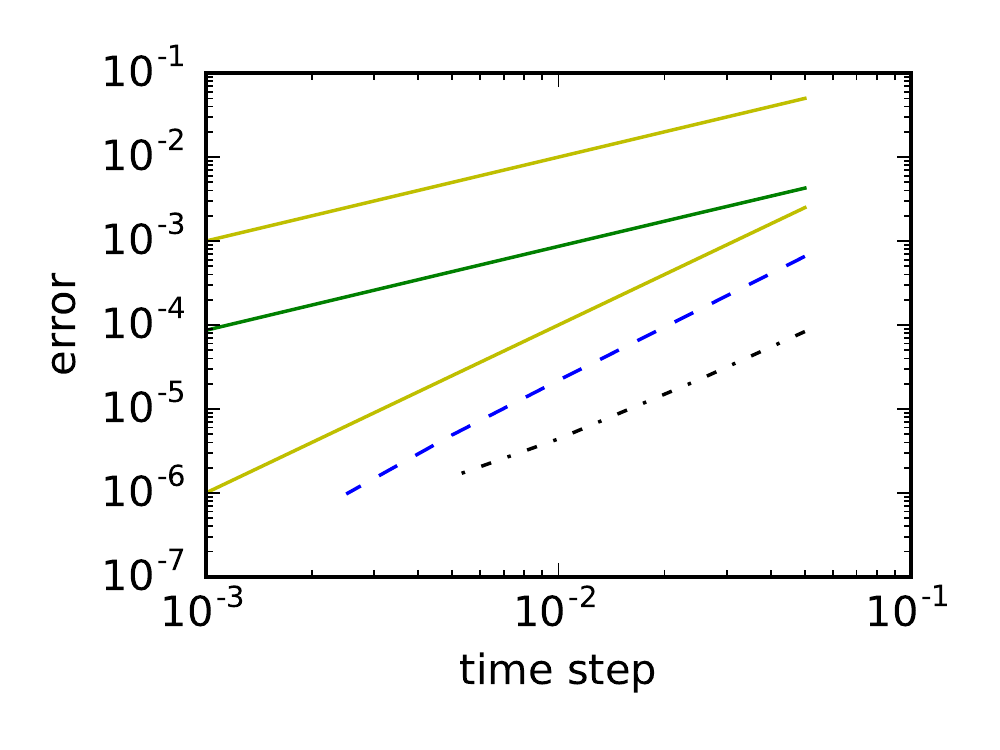}\\
	\includegraphics[width=0.45\textwidth]{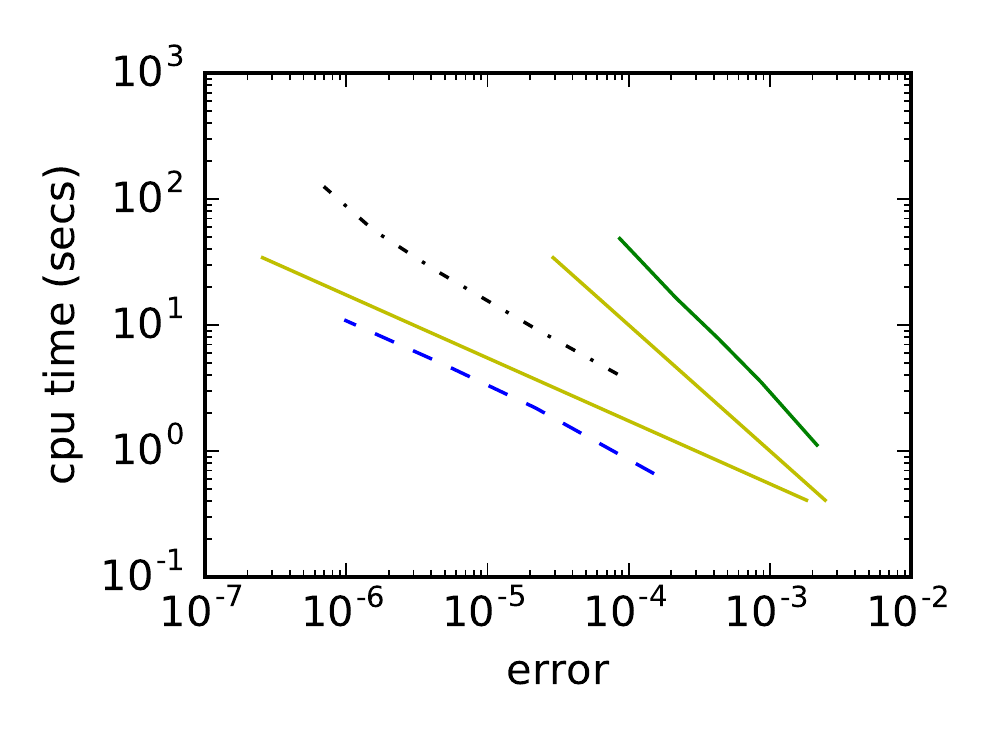}
	\includegraphics[width=0.45\textwidth]{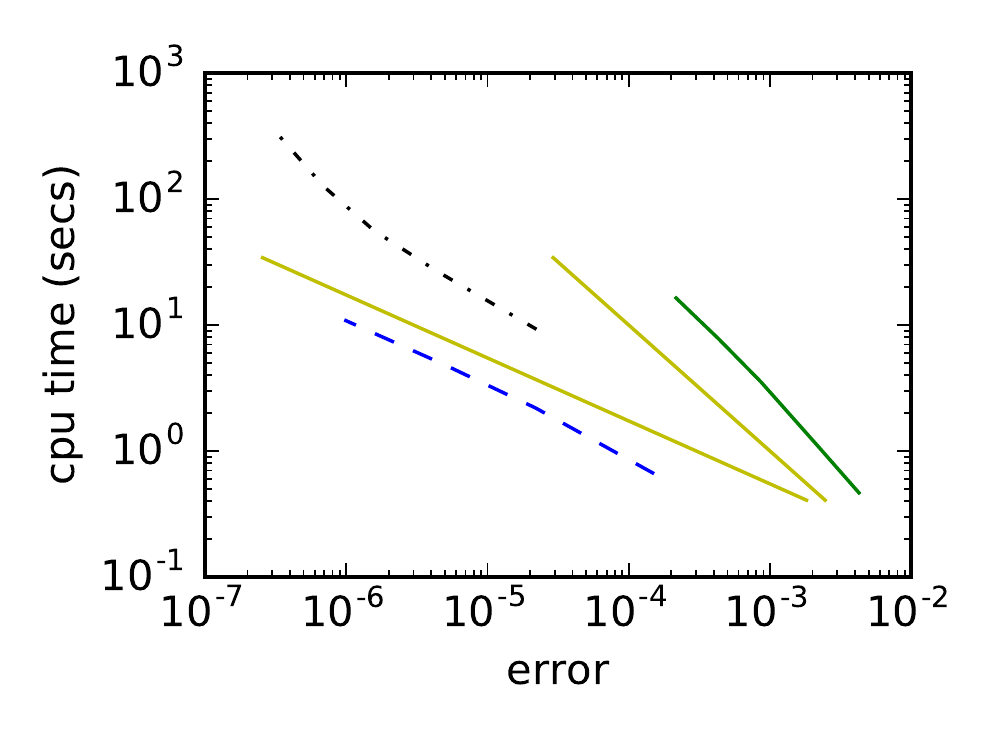}
	\caption{Polynomial drift: As for \cref{eg2} with the mean-field SDE \eqref{polydrift}. The error is computed by using an accurate numerical solution of \cref{ode} as a reference value.} \label{eg1}
\end{figure}
\subsection{Plane rotator} \label{sseg3}% eg3
The following is a model for coupled oscillators \cite{Kostur2002-nv} in the presence of noise:
\begin{equation}\label{plane}
dX^\mu(t)=\bp{ K \int_\real\sin(y-X^\mu(t))\,P^\mu_t(dy)-\sin(X^\mu(t))}\,dt + \sqrt{2\, k_B T}\,dW(t),
\end{equation}
for coupling parameter $K>0$, temperature $k_B T$, and initial condition $X^\mu(0)\sim \mu=\Nrm(\mu_0,\sigma_0^2)$.
In this case, we have a Gaussian initial distribution $\mu$, which can be approximated by Gauss--Hermite quadrature. The associated points and weights can be found tabulated or computed via the three-term recursion for the Hermite polynomials. In the implementation, we take the latter strategy and start with $Q_0$ equal to the $40$-point Gauss--Hermite rule.

 The variable $X^\mu(t)$ represents an angle. In place of the the diameter reduction step in \cref{alg} , we shift each point modulo $2\pi$ into $[0,2\pi)$. Also, we partition $[0,2\pi)$ into ten sub-intervals and apply Gauss quadrature on sub-intervals of width $L=\pi/5$. This significantly improves performance in experiments.

Following \cite{Ricketson2015-xv}, we choose parameter values
for $K=1$, $k_B T=1/8$ and initial mean $\mu_0=\pi/4$ and variance $\sigma^2_0=3\pi/4$.  Results are shown in \cref{eg3b}, which show errors for $P^\mu_1(\phi)$ for the test functions $\phi(x)=\sin^2(x)$ and $\phi(x)=\sin(x)$. Errors are computed by taking a reference solution given by GQ2.
%In \cite{Ricketson2015-xv}, the test is performed with parameter values
%$K=1$, $k_B T=1/8$, $\mu_0=\pi/4$, $\sigma^2_0=3\pi/4$ (the variance of the initial distribution is increased). Results are shown in \cref{eg3b}.
First-order convergence is observed for GQ1 and second-order convergence is observed for GQ2.  The methods work rapidly and the finest solution has 434 quadrature points.
In \cref{cpdf}, we show the pdf and cdf of the initial and final distribution.

\begin{figure}
	\centering
	\includegraphics[width=0.45\textwidth]{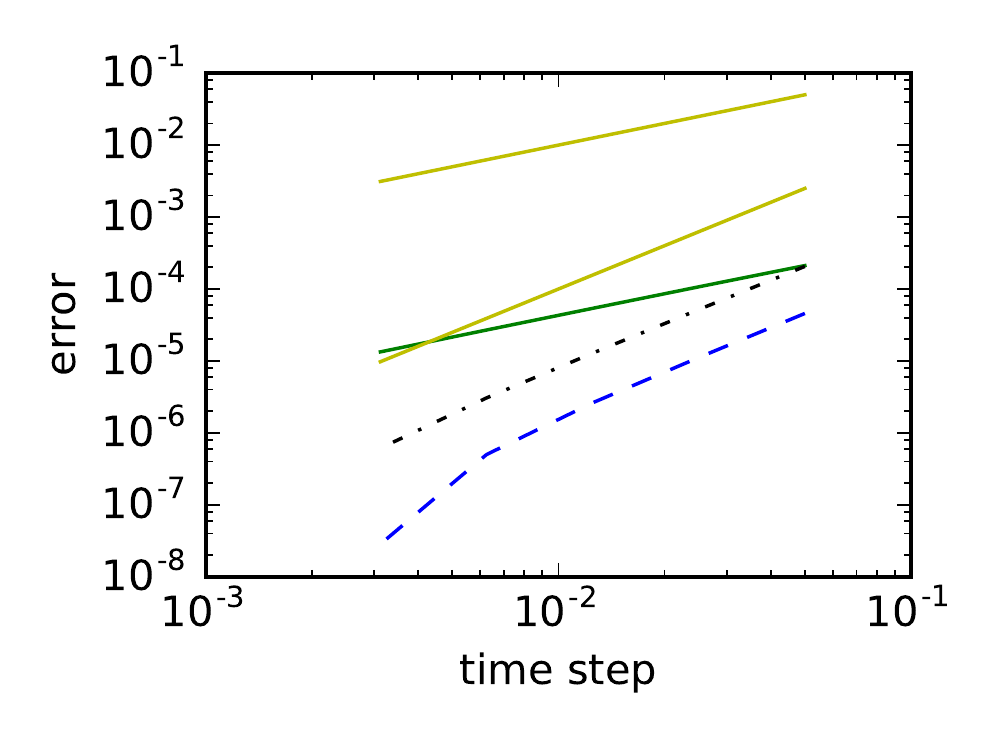}
	\includegraphics[width=0.45\textwidth]{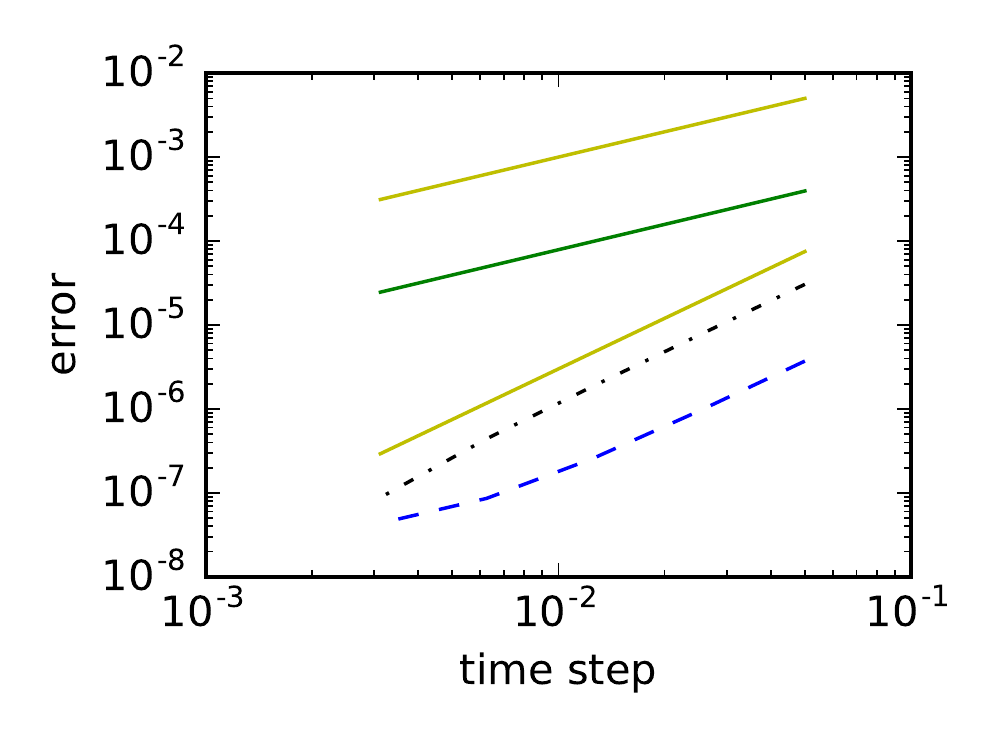}  \includegraphics[width=0.45\textwidth]{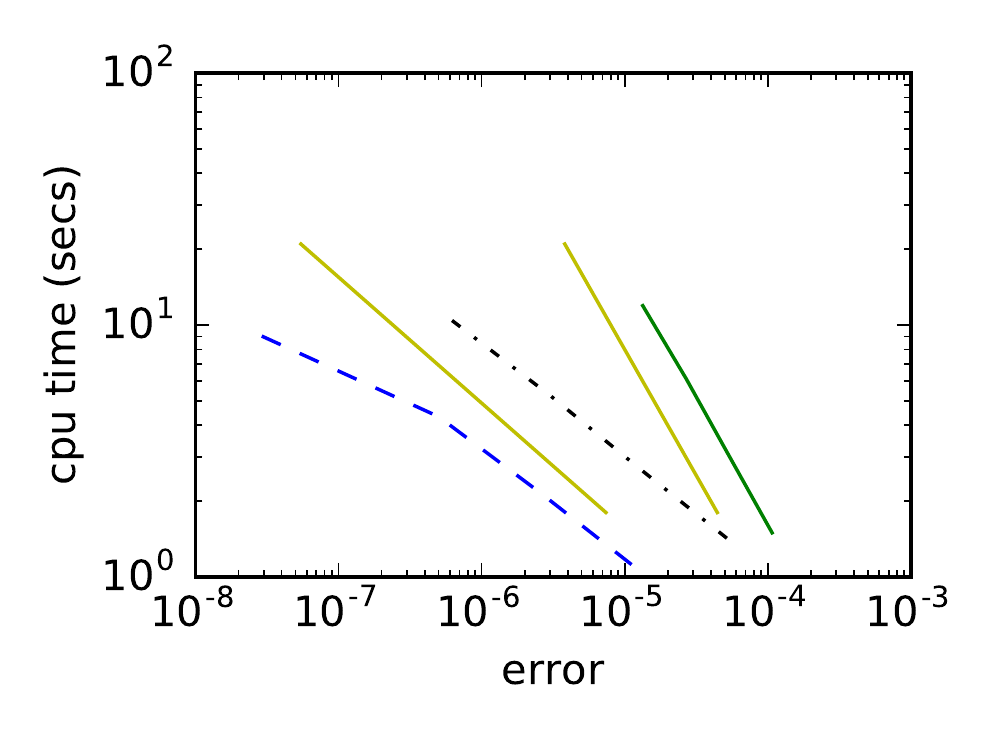}
	\includegraphics[width=0.45\textwidth]{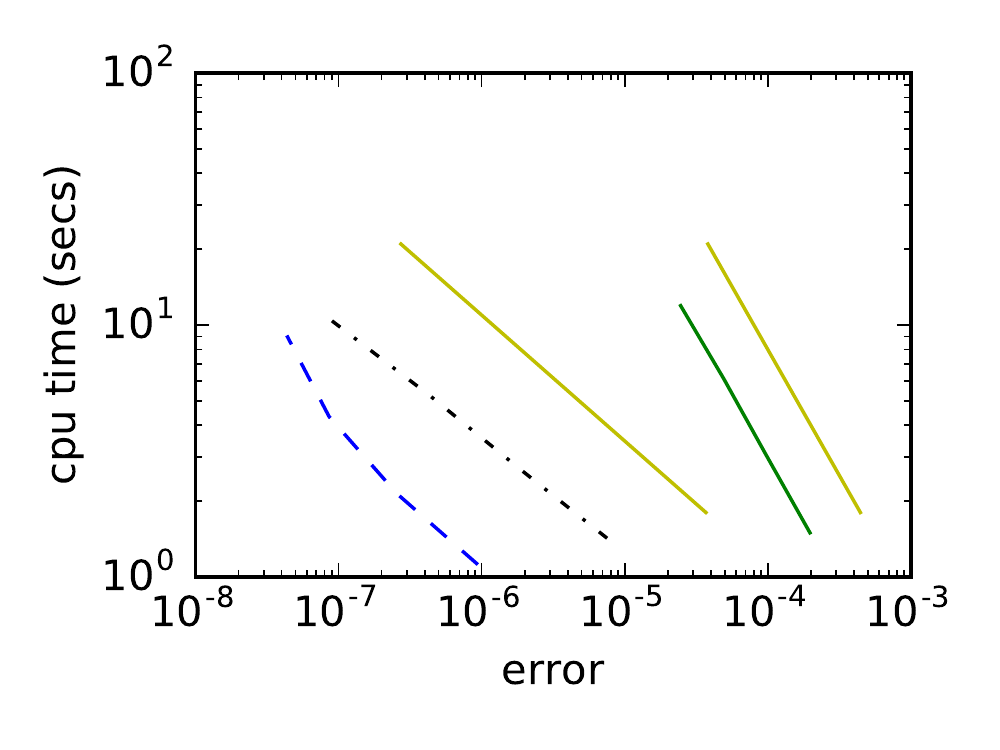}
	\caption{Plane rotator: error  against time step and cpu time for computing $\mean{\phi(X(1)) }$ for $\phi(x)=\sin^2(x)$ (left) and $=\sin(x)$ (right), via GQ1 (green), QG1e (blue dashed), and GQ2 (black dash-dot) methods for \cref{plane}. The yellow lines in the upper plots show slopes of $1$ and $2$, similar to the theoretical rate. The error is computed by taking a well-resolved GQ2 calculation for the reference value. } \label{eg3b}
\end{figure}

\begin{figure}
	\centering
	\includegraphics[width=0.45\textwidth]{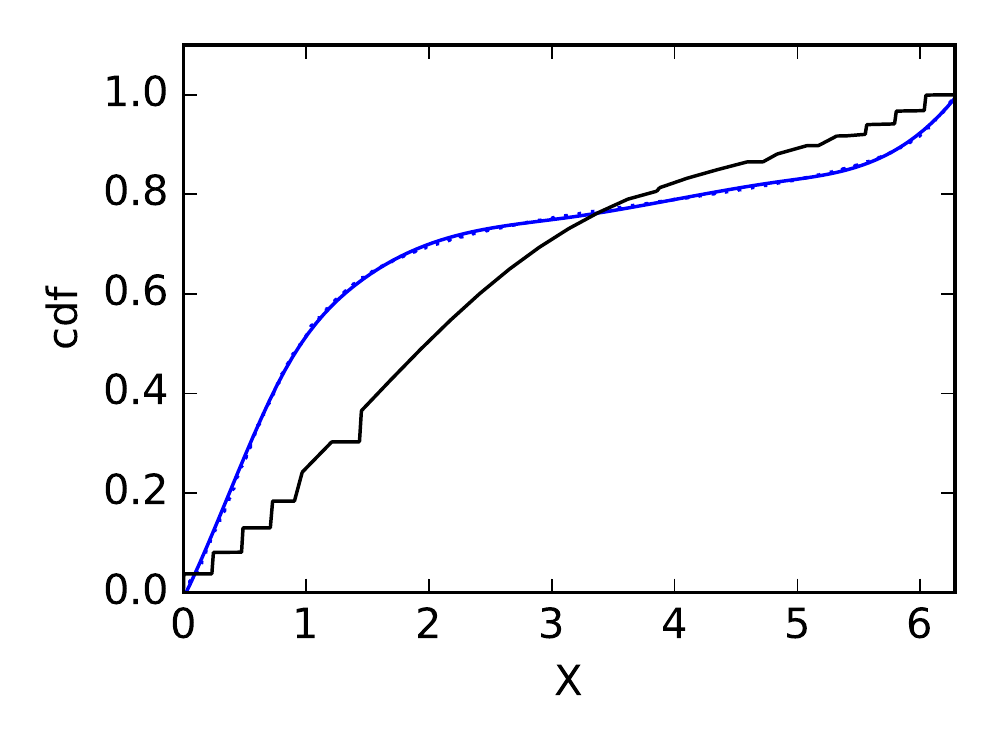}
	\includegraphics[width=0.45\textwidth]{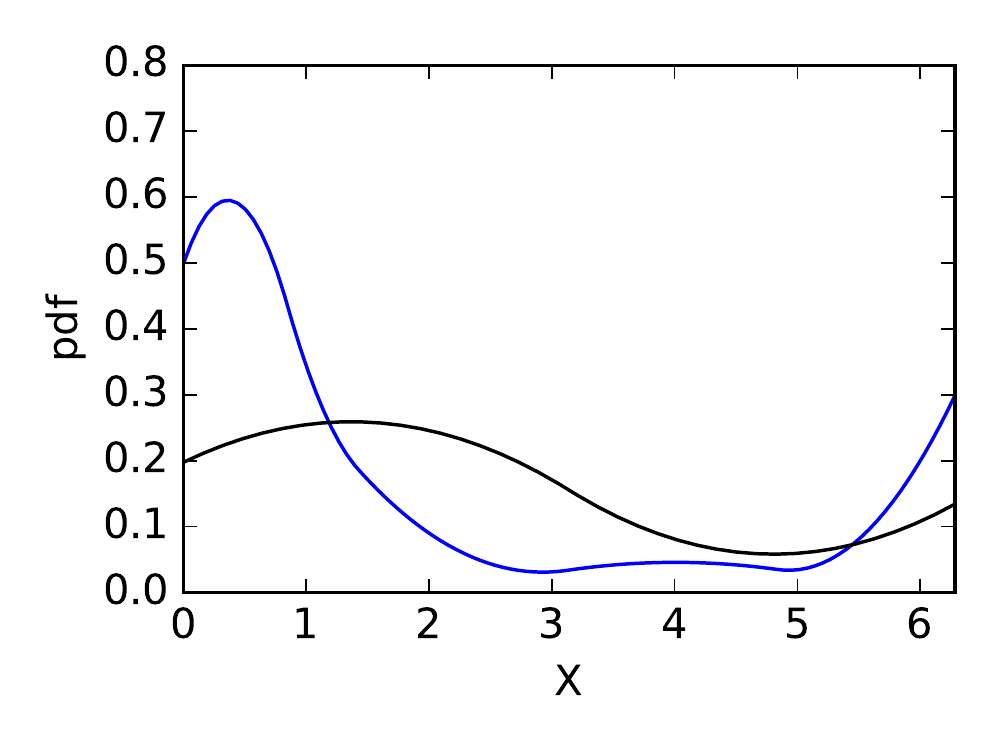}
	\caption{Plane rotator: the pdf and cdf for initial distribution  $\Nrm(\pi/4,3\pi/4)$.  The  plots show initial (black) and final (blue) distributions. The pdf is computed by differentiating a spline approximation to the cdf.} \label{cpdf}
\end{figure}
\subsection{Viscous Burgers equation}
Consider  the following mean-field SDE for a parameter $\sigma>0$:
\[
dX^\mu(t)=\int_\real \pp{1- H(X^\mu(t)-y)}\,P^\mu_t(dy) \,dt %
%\int_\real  H(y-X^\mu(t)) \,dP^\mu_t(y) \,dt %
+ \sigma\,dW(t),
\]
where $H$ is the Heaviside step function with $H(x)=0$ for $x< 0$ and $=1$ for $x\ge 0$, and an initial distribution $X^\mu(0)$ is prescribed. The drift term here can also be written as $\bar{a}(X,t)=\prob{X^\mu(t) <X}$. Let $X^\mu(t)$ have cumulative distribution function (cdf) $u(t,x)$; then $V(t,x)=1-u(t,x)$ satisfies the viscous Burgers equation
\[
\frac{\partial V}{\partial t}%
=\frac 12 \,\sigma^2 \,\frac{\partial^2 V}{\partial x^2} - V\,\frac{\partial V}{\partial x},\qquad x\in\real.
\]
In general, the solution of the  initial-value problem for viscous Burgers equation can be written as the difference of two cdfs defined by initial-value problems for a mean-field SDE \cite{Bossy1997-fs}.

For $X^\mu(0)$ equal to delta measure at zero, the exact cdf is  $u(0,x)=H(x)$ and
\begin{equation}\label{burger_exact}
u(t,x)=\frac{\operatorname{erfc} (-x/\sqrt{2 \,\sigma^2\,t})}%
{\operatorname{erfc}(-x/\sqrt{2\,\sigma^2\,t})+\exp((t-2\,x)/2\,\sigma^2) (2-\operatorname{erfc}((t-x)/\sqrt{2\,\sigma^2\,t})},
\end{equation}
where $\operatorname{erfc}$ denotes the complementary error function \cite{Bossy1997-fs}. We see in particular the solution represents a soliton travelling to the right with speed $1/2$.

For the GQ methods, this problem presents two challenges. First, the mean-field term cannot be factored out as in \cref{factor} and $P_t^\mu (H( \cdot-X^\mu(t)))$ must be evaluated by quadrature for each particle representing $X^\mu(t)$. This increases computation time as $m$ quadratures are needed at each step, instead of one. The lack of structure also means GQ2 cannot be used.

Second, the Heaviside function has a jump discontinuity at $x=0$ and this lack of smoothness is evident in experiments.
Introduce the regularised function
\[
1-H(x)%
\approx \frac 12 \operatorname{erfc}(x/\ell),\qquad x\in\real,
\]
for a length scale $\ell>0$.
The equation
\begin{equation}\label{burger_reg}
dX^\mu(t)=\int_\real \frac 12 \operatorname{erfc}\pp{\frac{X^\mu(t)-y}{\ell}}\,P^\mu_t(dy) \,dt + \sigma\,dW(t)
\end{equation}
has smooth bounded coefficients and the behaviour of the GQ algorithms is shown in \cref{conv_burger_good}. The convergence behaviour is broadly in line with the theory for $\phi(x)=x^2$, though GQ1e looses accuracy for small $\tstep$ when $\ell$ is reduced to $\ell=0.001$ from $\ell=0.1$ and the drift more closely resembles the Heaviside function. GQ1 and GQ1e accurately compute the first moment, which gives the centre of the soliton at $x=1/2$, to high accuracy (the error is $10^{-12}$ even for $\tstep=0.05$ and $\ell=0.001$; not shown in the figures). \cref{cdf_burger} shows a comparison of the cdf of GQ1e using $\ell=0.001$  with the exact cdf for $\tstep=3\times 10^{-4}$ with $74$ quadrature points. The two agree with an $L^1(\real)$ error of approximately $10^{-2}$.

\begin{figure}
	\centering
	\includegraphics[width=0.45\textwidth]{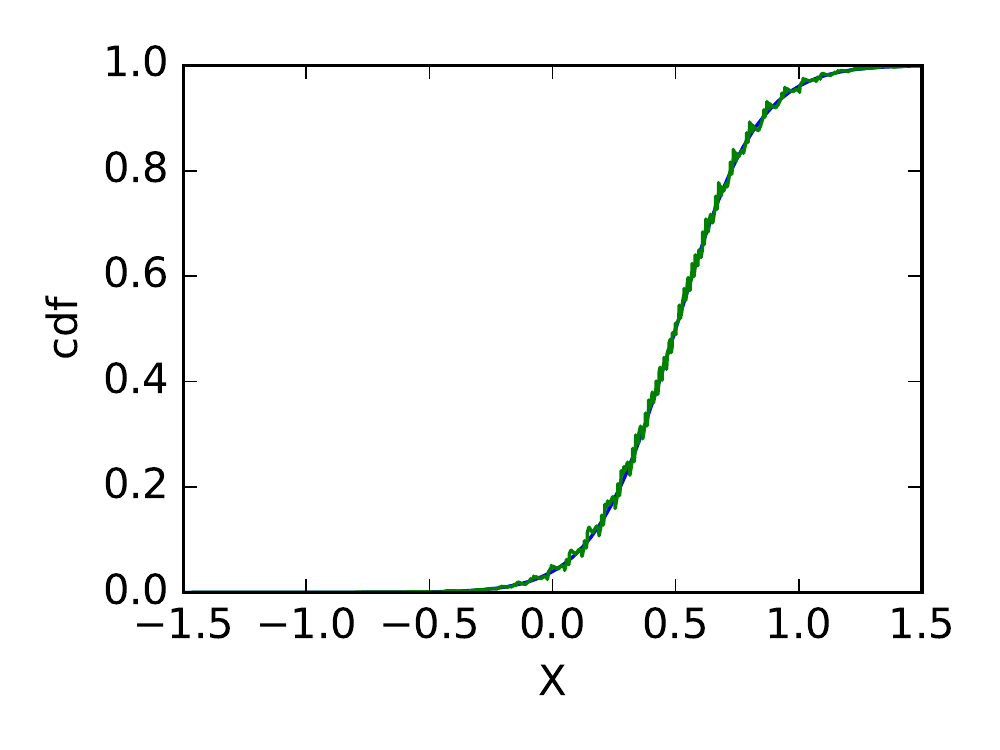}
	\includegraphics[width=0.45\textwidth]{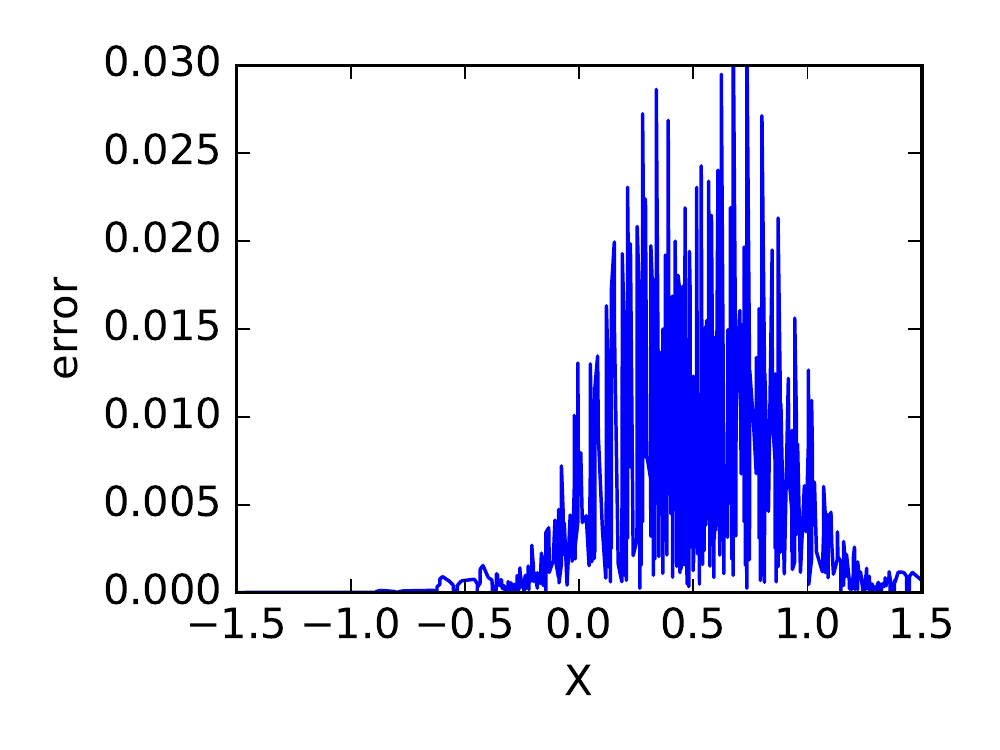}\\
	\caption{Burgers equation for $\frac 12\sigma^2=0.1$: Comparison of the exact cdf at $t=1$ given by \cref{burger_exact} and the numerical approximation by GQ1e of \cref{burger_reg} with  for $\tstep=3\times 10^{-4}$ and $\ell=10^{-3}$ (using 74 quadrature points).} \label{cdf_burger}
\end{figure}
\begin{figure}
	\centering
	\includegraphics[width=0.45\textwidth]{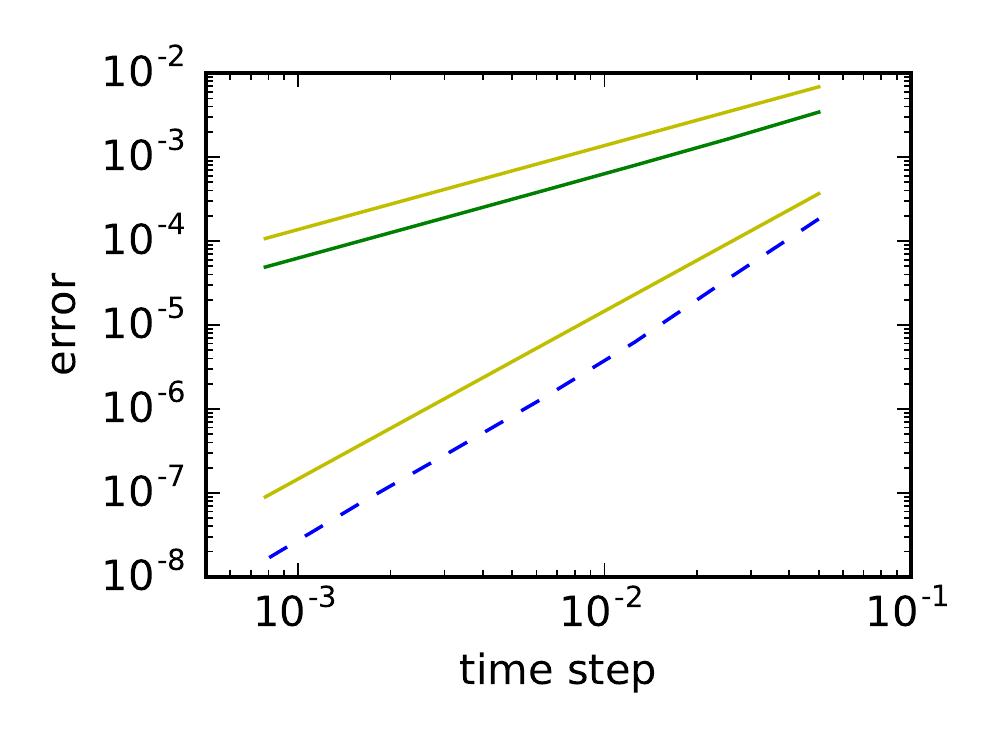}
	\includegraphics[width=0.45\textwidth]{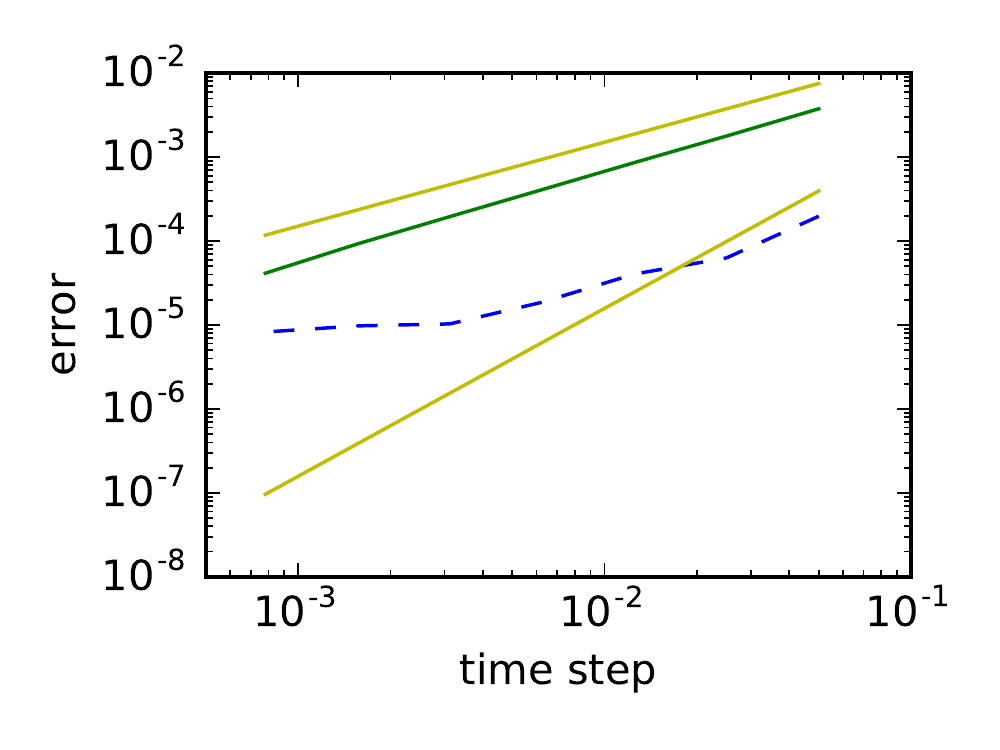}\\
	\caption{Burgers equation for $\frac 12\sigma^2=0.1$: The error in approximating the second moment of \cref{burger_reg} for $\ell=0.1$ (left) and $\ell=0.001$ (right). The green line marks GQ1 and the blue dashed-line marks GQ1e.} \label{conv_burger_good}
\end{figure}

\section{Conclusion}\label{conc}

We have derived a time-stepping method based on Gauss quadrature for approximating the probability distribution of the solution of mean-field SDEs at a fixed time. The work per time step is dominated by the eigenvalue problem for determining the Gauss quadrature. The total work required depends on the smoothness of the underlying problem and in the best case is $\order{\varepsilon^{-1/p}\,\abs{\log \varepsilon}^{3}}$ operations when the underlying time-stepping method has $p$th order accuracy.

Though very effective for one-dimensional mean-field SDEs, their dependence on Gauss quadrature means the presented methods are difficult to extend to higher dimensions. The available methods for higher dimensions include \cite{Muller-Gronbach2015-vv,Ricketson2015-xv,McMurray2015-lv} and   are not as efficient. One-dimensional mean-field SDEs remain an interesting case due to their use in understanding high-dimensional interacting particle systems and the proposed methods are far more efficient than currently available methods.

The drift $a$ and diffusion $b$ in this paper are assumed to be bounded with bounded derivatives, which is unrealistic for many problems (including those  in \cref{num} with polynomial $a$ and $b$). Much work is currently being undertaken to extend the numerical analysis of SDEs to non-Lipschitz problems (for example, \cite{Hutzenthaler2014-ma,Hutzenthaler2015-eg}). Some of this will carry over to the Gauss-quadrature methods and mean-field SDEs, though nice properties such as \cref{mf_mom} (boundedness of exponential moments for Euler--Maruyama) no longer hold in general.  Some extensions are  presented in
\cite{Muller-Gronbach2015-vv}, who also consider bounded coefficients but allow more general regularity conditions on the test functions than presented here. They also provide  a non-uniform time-stepping scheme that allows more efficient approximation of less smooth problems.

\bibliographystyle{siamplain}
\bibliography{weak_new}
%\def\bibfont{\normalfont\small}
%\printbibliography

%\appendix
%---------------------
\end{document}